\numberwithin{equation}{section} 
\newtheorem{theorem}{\bf Theorem}[section]
\newtheorem{example}{\bf Example}[section]
\newtheorem{remark}{\bf Remark}[section]
\newtheorem{lemma}{\bf Lemma}[section]
\newcommand{\bphi}{\mbox{\boldmath $\phi$}}
 \newcommand{\norm}[1]{\left\lVert #1\right\rVert}
\newcommand{\tnorm}[1]{\ensuremath{\left| \! \left| \! \left|} #1 \ensuremath{\right| \! \right| \! \right|}}
\newsavebox{\savepar}		 
\begin{document}
\title{Global Stabilization of BBM-Burgers' Type Equations by Nonlinear Boundary Feedback Control Laws: Theory
 and Finite Element Error Analysis}
 %its Finite Element Analysis}
\author{ Sudeep Kundu\footnote{
		Institute of Mathematics and Scientific Computing, University of Graz,
	Heinrichstr. 36, A-8010 Graz, Austria,
	Email: sudeep.kundu@uni-graz.at} 
	and
	Amiya Kumar Pani\footnote{
	Department of Mathematics, 	IIT Bombay, 
  Powai, Mumbai-400076, India,
	Email:{akp@math.iitb.ac.in}}.}
\maketitle
\abstract{ In this article, global stabilization results for the Benjamin-Bona-Mahony-Burgers' (BBM-B) type 
equations are obtained using nonlinear Neumann boundary feedback control laws.
Based on the $C^0$-conforming finite element method, global stabilization results for the semidiscrete 
solution are also discussed. Optimal error estimates in $L^\infty(L^2)$, $L^\infty(H^1)$ 
and $L^\infty(L^\infty)$-norms for the state variable are derived, which preserve exponential stabilization 
property. Moreover, for the first time in the literature, superconvergence results  for the boundary 
feedback control laws are established.
Finally, several numerical experiments are conducted to confirm our theoretical findings.}

Keywords: Benjamin-Bona-Mahony-Burgers' equation, Neumann boundary feedback control, Stabilization, 
Finite element method, Optimal error estimates, Numerical experiments\\
AMS subject classification: 35B37, 65M60, 65M15, 93D15
\section{Introduction}
Consider the Benjamin-Bona-Mahony-Burgers' (BBM-B) equations of the following type: seek $u=u(x,t),\hspace{0.1cm} x\in I=(0,1)$ and $t>0$ which satisfies
\begin{align}
	&u_t-\mu u_{xxt}-\nu u_{xx}+u_x+uu_x=0,\qquad  (x,t)\in(0,1)\times(0,\infty)\label{deq1.1},\\
	&u_x(0,t)=v_0(t),   \qquad t\in (0,\infty)\label{deq1.2},\\
	&u_x(1,t)=v_1(t), \qquad t\in (0,\infty)\label{deq1.3},\\
	&u(x,0)=u_0(x),\qquad x\in (0,1)\label{deq1.4},
\end{align}
where, the dispersion coefficient $\mu>0$  and the dissipative coefficient $\nu>0$  are constants; $v_0$ 
and $v_1$ are scalar control inputs.
The problem \eqref{deq1.1} describes the unidirectional propagation of nonlinear 
dispersive long waves with dissipative effect. In case, $\nu=0$ and $\mu>0,$ the equation \eqref{deq1.1} 
is known as Benjamin-Bona-Mahony (BBM) equation.
When $\mu=0$ and $\nu>0$  in \eqref{deq1.1}, then it is called Burgers' equation. 
For mathematical modeling and physical applications of \eqref{deq1.1}, see \cite{Peregrine}, \cite{Benjamin}, 
\cite{Bona} and references, therein.

Based on distributed and Dirichlet boundary control in feedback form through Riccati operator,
%Concerning 
local stabilization results for the Burgers' equation with sufficiently small initial data 
% , we refer to 
are established in \cite{bk}, \cite{bk1}. Moreover, for local stabilization results using Neumann boundary 
control, we refer to  \cite{bgs}, 
%. For more details in this direction, we refer to 
\cite{ik} and \cite{iy}. It is to be noted that for viscous Burgers' equation, global existence 
and uniqueness results with Dirichlet  and Neumann boundary conditions  are derived for any 
initial data in $L^2$ in \cite{lmt}.
Subsequently, based on nonlinear Neumann and Dirichlet boundary control laws, global stabilization 
results for the Burgers' equation are proved using a suitable application Lyapunov type functional  
in Krstic \cite{krstic1}, Balogh and Krstic \cite{balogh}. Later on, adaptive (when $\nu$ is unknown) 
and nonadaptive (when $\nu$ is known) stabilization results for generalized Burgers' equations are 
established in \cite{liu}, \cite{Smaoui} and \cite{smaoui1} with different types of boundary conditions. 
For existence of solution to the problem \eqref{deq1.1}-\eqref{deq1.4}, when $\mu=0$, we refer 
to \cite{balogh} and \cite{liu}.

For stabilization of the BBM-B equation, the authors in \cite{hasan} have shown global stabilization results corresponding to $\mu=1$ with zero Dirichlet boundary condition at one end and Neumann boundary control on the other end. Using a reduced order model, distributed feedback control for the BBM-B equation is discussed in \cite{piao1}.  Also, quadratic $B$-spline finite element method followed by linear quadratic regulator theory to design feedback control, is used to stabilize in \cite{piao2} without any convergence analysis.
In \cite{skakp}, we have shown that, under the uniqueness assumption of the steady state solution, the steady 
state solution of the problem \eqref{deq1.1} with zero Dirichlet boundary condition is exponentially 
stable. 

In this paper, we discuss global stabilization results using nonlinear Neumann feedback control law. Our second objective is to apply $C^0$-finite element method to the stabilization problem \eqref{deq1.1}-\eqref{deq1.4} using nonlinear Neumann boundary control laws and discuss convergence analysis. Since to the best of our knowledge, there is hardly any discussion in the literature on the rate of convergence, hence, in this paper, an effort has been made to prove optimal order of convergence of the state variable along with superconvergence result for the feedback control laws.
The main contributions of this article are summarized as:
\begin{itemize}
\item Global stabilization for  problem \eqref{deq1.1}-\eqref{deq1.4}, that is, convergence of the unsteady solution to the problem \eqref{deq1.1} to its constant steady state solution under nonlinear Neumann boundary control laws \eqref{deq1.2}-\eqref{deq1.3} is proved.
\item Based on the $C^0$- conforming finite element method, global stabilization results for the semidiscrete solution are discussed and optimal error estimates are established in $L^\infty(L^2)$, $L^\infty(H^1)$, and $L^\infty(L^\infty)$ norms for the state variable. Moreover, superconvergence results are derived for the nonlinear Neumann feedback control laws. 
\item  
Finally, some numerical experiments are conducted to confirm our theoretical results.
\end{itemize}
For related issues of finite element analysis %and order of convergence of 
of the viscous Burgers' equation using nonlinear Neumann boundary feedback control law, we refer to our 
recent article \cite{skakp1}. Compared to \cite{skakp1}, special care has been taken to establish global stabilization 
results in $L^\infty(H^i) (i=0, 1, 2)$ norms as $\mu\to 0.$ It is further observed that the decay rate for the BBM-B type equation is less than 
the decay rate for the viscous Burgers' equation and as the  dispersion coefficient $\mu$ approaches zero,
the decay rate also converges to the decay rate for the Burgers' equation.
Finite element error analysis holds for fixed $\mu$.

For the rest of this article, we denote by $H^m(I=[0,1])$  the standard Sobolev spaces with norm $\norm{\cdot}_m$ and for $m=0$, $\norm{\cdot}$ denotes the corresponding $L^2$ norm.
% Set ${H^1_0(I)}=\{v\in H^1(I): v(0)=v(1)=0 \}$. 
% Let $H^{-1}$ be the dual space of $H^1_0(I)$ with norm $\norm{\cdot}_{-1}.$
The space $L^p((0,T);X)$  $1\leq p\leq\infty, $ consists of all strongly measurable functions $v:(0,T) \rightarrow X $ with norm
 $$\norm{v}_{L^p((0,T);X)}:=\left(\int_{0}^{T}\norm{v(t)}_X^p dt\right)^\frac{1}{p}<\infty \quad \text{for} \quad 1\leq p<\infty,$$ and 
% $$\norm{u}_{L^\infty([0,T];X)}:=\operatorname{ess}\sup\limits_{0\leq t\leq T}\norm{u(t)}$$
$$\norm{v}_{L^\infty((0,T);X)}:=\operatorname*{ess\,sup}\limits_{0\leq t\leq T}\norm{v(t)}_X<\infty.$$ When there is no confusion, $L^p((0,T);X)$ is simply denoted by $L^p(X)$.
The equilibrium or steady state solution $u^\infty$ of \eqref{deq1.1}-\eqref{deq1.3} satisfies
\begin{align}
	&-\nu u^\infty_{xx}+u^\infty_x+u^\infty u^\infty_{x}=0 \qquad\text{in} \quad (0,1) \label{deq1.5},\\
	&u^\infty_x(0)=u^\infty_x(1)=0 \label{deq1.6}.
\end{align}
Note that any constant $w_d$ is a solution of the steady state problem \eqref{deq1.5}-\eqref{deq1.6}. Without loss of generality, we assume that $w_d\geq 0$.\\
Set $w=u-w_d,$ which satisfies
\begin{align}
&w_t-\mu w_{xxt}-\nu w_{xx}+(1+w_d)w_x+ww_x=0 \qquad\text{in} \quad (0,1)\times(0,\infty)\label{deq1.7} ,\\
&w_x(0,t)=v_0(t)  \label{deq1.8}\quad t\in(0,\infty),\\
&w_x(1,t)=v_1(t)  \label{deq1.9}\quad t\in(0,\infty),\\
&w(x,0)=w_0(x), \label{deq1.10}\quad x\in(0,1),
\end{align}
where, $w$ is the state variable and $v_0$ and $v_1$ are feedback control variables.
Since for the problem with zero Neumann boundary condition, the steady state constant solution 
$w_d$ is not asymptotically  stable, we plan to achieve stabilization result through boundary feedback law.
The present analysis can be easily extended to the problem with one side control law  say for  example:
when $w(0,t)=0$, $w_x(1,t)=v_1(t)$, see \cite{hasan}.
The weak formulation of the problem \eqref{deq1.7}-\eqref{deq1.10} is to seek $w(t)\in H^1(0,1),$ $w_t \in 
L^2(L^2)$ and $\mu w_t\in L^2(H^1)$
such that  for almost all $t>0$
\begin{align}
(w_t,\chi)+&\mu(w_{xt},\chi_x)+\nu(w_x,\chi_x)+(1+w_d)(w_x,\chi)+(ww_x,\chi)-\mu\Big(v_{1t}(t)\chi(1)\notag\\
&-v_{0t}(t)\chi(0)\Big)-\nu\Big(v_1(t)\chi(1)-v_0(t)\chi(0)\Big)=0  \qquad \forall~ \chi\in H^1
\label{deq1.11}
\end{align}
with $w(x,0)=w_0(x).$
For motivation to choose the control laws $v_{0t}$ and $v_{1t}$ using construction of Lyapunov functional, 
see \cite{krstic1}.
Based on the nonlinear Neumann control law propose in our earlier article in Burgers' equation, see 
\cite{skakp1}, which is a modification of control law in \cite{krstic1}, we now choose  the feedback 
control law as  %of the form
\begin{equation}\label{deq1.13}
	w_x(0,t)=v_0(t)=:\frac{1}{\nu}\Big((c_0+1+w_d)w(0,t)+\frac{2}{9c_0}w^3(0,t)\Big)\equiv: K_0(w(0,t)),
\end{equation}
and 
\begin{equation}\label{deq1.14}
	w_x(1,t)=v_1(t)=:-\frac{1}{\nu}\Big((c_1+1+w_d)w(1,t)+\frac{2}{9c_1}w^3(1,t)\Big)\equiv:K_1(w(1,t)),
\end{equation}
where $K_0$ and $K_1$ represent feedback control laws, and $c_0$ and $c_1$ are positive constants.\\
Using \eqref{deq1.13}-\eqref{deq1.14}, we obtain a typical nonlinear problem \eqref{deq1.7}-\eqref{deq1.10} with boundary conditions \eqref{deq1.13}-\eqref{deq1.14}.
Its weak formulation \eqref{deq1.11} becomes
\begin{align}
(w_t,\chi)+&\mu(w_{xt},\chi_x)+\nu(w_x,\chi_x)+(1+w_d)(w_x,\chi)+(ww_x,\chi)\notag\\
&+\frac{\mu}{\nu}\Big(\big((c_0+1+w_d)w_t(0,t)+\frac{2}{3c_0}w^2(0,t)w_t(0,t)\big)\chi(0)+\big((c_1+1+w_d)w_t(1,t)\notag\\
&+\frac{2}{3c_1}w^2(1,t)w_t(1,t)\big)\chi(1)\Big)+\Big(\big((c_0+1+w_d)w(0,t)\notag\\
&+\frac{2}{9c_0}w^3(0,t))\chi(0)+\big((c_1+1+w_d)w(1,t)+\frac{2}{9c_1}w^3(1,t)\big)\chi(1)\Big)=0  \qquad \forall~ \chi\in H^1
\label{deqx1.11}.
\end{align}
Throughout the paper, we use the following norm which is equivalent to the usual $H^1$-norm:
\begin{equation}\label{eqxn1.11}
\tnorm{z(t)}=\sqrt{z^2(0,t)+z^2(1,t)+\norm{z_x(t)}^2},
\end{equation}
and $C$ is used as a generic positive constant.

We now recall some results to be use in our subsequent sections.
%{\bf{Gr\"onwall's inequality \cite{canon99}}} 
%Let y(.) be a nonnegative, absolutely continuous function on $[0,\infty)$ that satisfies for a.e. $t$ the differential inequality
%$$\frac{d}{dt}y(t)+f(t)\leq h(t)+g(t)y(t),$$
%where $f$, $g$, $h$ be three nonnegative, locally integrable functions on $[0,\infty)$. Then
%$$0\leq y(t)+\int_{0}^{t}f(s) ds\leq \big(y(0)+\int_{0}^{t}h(s) ds\big)\exp(\int_{0}^{t}g(s) ds), \quad\forall t>0.$$
\begin{lemma}\label{pw}
{\bf{Poincar\'e-Wirtinger's inequality}} For any $z(t)\in H^1(0,1)$, the following inequality holds:
$$\norm{z(t)}^2\leq 2z^2(i,t)+\norm{z_x(t)}^2, \quad \text{for} \hspace{0.1cm} i=0 \hspace{0.1cm}\text {or} 
\hspace{0.1cm} 1.$$
\end{lemma}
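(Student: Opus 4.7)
The plan is to apply the fundamental theorem of calculus at one of the endpoints and then reduce the resulting pointwise identity to the desired integral inequality via Cauchy--Schwarz and the elementary bound $(a+b)^2 \leq 2a^2 + 2b^2$. Fixing $t$ and suppressing it in the notation, for $z\in H^1(0,1)$ I would start from the representation
\[
z(x) \;=\; z(0) + \int_0^x z_y(y)\,dy, \qquad x \in [0,1],
\]
which is valid by density of smooth functions in $H^1(0,1)$ together with the trace embedding $H^1(0,1)\hookrightarrow C([0,1])$.

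Squaring this identity and using $(a+b)^2 \leq 2a^2 + 2b^2$ together with Cauchy--Schwarz gives
\[
z^2(x) \;\leq\; 2\,z^2(0) + 2\!\left(\int_0^x z_y(y)\,dy\right)^{\!2} \;\leq\; 2\,z^2(0) + 2x\int_0^x z_y^2(y)\,dy \;\leq\; 2\,z^2(0) + 2x\,\|z_x\|^2.
\]
Integrating over $x\in(0,1)$ and using $\int_0^1 2x\,dx = 1$ yields
\[
\|z\|^2 \;\leq\; 2\,z^2(0) + \|z_x\|^2,
\]
which is exactly the stated bound for $i=0$. The sharp constant $1$ in front of $\|z_x\|^2$ (rather than $2$) is precisely what the Cauchy--Schwarz weight $x$ delivers upon integration; the one bookkeeping point is to keep this weight instead of bounding $x$ by $1$ prematurely, which would cost a factor of two.

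For $i=1$ I would run the symmetric argument starting from $z(x) = z(1) - \int_x^1 z_y(y)\,dy$; the Cauchy--Schwarz weight is then $1-x$, and since $\int_0^1 2(1-x)\,dx = 1$ as well, the same bound emerges with $z^2(1)$ in place of $z^2(0)$. There is no real obstacle here, as this is a standard one-dimensional trace/Poincaré-type estimate; the only thing to monitor is that the constants line up so that the coefficient of $\|z_x\|^2$ is $1$ rather than $2$.
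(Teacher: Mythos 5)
Your proof is correct. The paper states this lemma without proof, recalling it as a standard result, so there is no in-paper argument to compare against; your derivation via the fundamental theorem of calculus, the bound $(a+b)^2\leq 2a^2+2b^2$, and Cauchy--Schwarz with the weight $x$ (resp.\ $1-x$) is the standard one, and you correctly track the weight so that $\int_0^1 2x\,dx=1$ yields the coefficient $1$ on $\norm{z_x}^2$ rather than $2$.
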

Using Agmon's and Poincar\'e inequality, the following inequality holds
\begin{equation}\label{eqx1.12}
\norm{z(t)}_{L^\infty}\leq \sqrt{2}\tnorm{z(t)},
\end{equation}
where $\tnorm{\cdot}$ is given in \eqref{eqxn1.11}.\\
Bellow, we assume the following well posedness theorem for the problem \eqref{deq1.7}-\eqref{deq1.10}
\begin{theorem}\label{thmx1}
%Let $T>0$ be  a real number and 
Let  $w_0(x)\in H^2(0,1).$ Then, there exists a unique weak solution 
$w(t)\in H^1(0,1)$, $w_t\in L^2(L^2)$ and $\mu w_t\in L^2(H^1)$ of \eqref{deq1.7}-\eqref{deq1.8} 
satisfying the weak formulation \eqref{deqx1.11}.

In addition, the following regularity result holds
\begin{align}
\norm{w(t)}^2_2+ \norm{w_t(t)}^2+\mu \norm{w_{xt}(t)}^2+\int_{0}^{t}
\Big(\norm{w_t(s)}^2_1+\mu\norm{w_{xxt}(s)}^2\Big)\;ds\leq C.
\end{align}
\end{theorem}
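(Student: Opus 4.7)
The plan is a classical Galerkin approximation plus energy estimates, compactness, and uniqueness argument. Since the Neumann data \eqref{deq1.13}--\eqref{deq1.14} are genuinely nonlinear, I would not use Laplacian eigenfunctions; instead I would pick any basis $\{\phi_j\}_{j\ge 1}$ of $H^1(0,1)$ (a hierarchical FEM basis or a trigonometric one will do) and define the Galerkin approximation $w_m(t)=\sum_{j=1}^m\alpha_j(t)\phi_j(x)$ solving the finite-dimensional counterpart of \eqref{deqx1.11}. Because $\mu>0$, the system of ODEs for $\alpha_j$ has an invertible mass matrix even after absorbing the boundary corrections from the $\mu/\nu$ terms, so Picard--Lindel\"of gives local-in-time existence; the global statement follows once the a priori bounds below are established. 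The central estimate comes from testing with $\chi=w_m$: the feedback contributions weighted by $\mu/\nu$ combine into $\tfrac{d}{dt}$ of the boundary Lyapunov quantity $\sum_{i=0,1}\tfrac{\mu}{2\nu}\bigl[(c_i+1+w_d)w_m^2(i,t)+\tfrac{1}{3c_i}w_m^4(i,t)\bigr]$, while the remaining feedback contributions $(c_i+1+w_d)w_m^2(i,t)+\tfrac{2}{9c_i}w_m^4(i,t)$ are sign-definite. The convective and nonlinear interior terms integrate by parts to leave the indefinite quantities $\tfrac{1+w_d}{2}[w_m^2(1)-w_m^2(0)]$ and $\tfrac{1}{3}[w_m^3(1)-w_m^3(0)]$, which Young's inequality $|w_m^3(i)|\le\epsilon w_m^4(i)+C_\epsilon w_m^2(i)$ absorbs into the positive terms; the constants $\tfrac{2}{9c_i}$ in \eqref{deq1.13}--\eqref{deq1.14} are tuned so this absorption closes. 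This yields
\begin{equation*}
\tnorm{w_m(t)}^2+\mu\|w_{mx}(t)\|^2+\sum_{i\in\{0,1\}}\bigl(w_m^2(i,t)+\mu w_m^4(i,t)\bigr)\le C,
\end{equation*}
and via \eqref{eqx1.12} the uniform bound $\|w_m\|_{L^\infty((0,\infty)\times(0,1))}\le C$.

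\textbf{Bounds on $w_{mt}$ and on $w_{mxx}$.} Testing \eqref{deqx1.11} with $\chi=w_{mt}$ produces $\|w_{mt}\|^2+\mu\|w_{mxt}\|^2$ on the left together with positive boundary terms $\tfrac{\mu}{\nu}[(c_i+1+w_d)+\tfrac{2}{3c_i}w_m^2(i)]w_{mt}^2(i)$; the remaining mixed terms are controlled by the $L^\infty$ bound from the previous step and Young's inequality, giving $\|w_{mt}\|_{L^2(L^2)}^2+\mu\|w_{mxt}\|_{L^2(L^2)}^2\le C$. To promote this to a pointwise bound $\|w_{mt}(t)\|^2+\mu\|w_{mxt}(t)\|^2\le C$, I would formally differentiate the Galerkin ODE in $t$ and test with $w_{mt}$; the initial control on $w_{mt}(0)$ comes from \eqref{deq1.7} evaluated at $t=0$, which is well defined because $w_0\in H^2$ and compatible with the Neumann feedback. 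The full $L^2(H^1)$ bound on $w_{mt}$ then follows from Lemma~\ref{pw} applied to $w_{mt}$, which upgrades $\|w_{mxt}\|$ plus boundary control to $\|w_{mt}\|_1$. Finally, for the $H^2$ regularity I would view \eqref{deq1.7} at fixed $x$ as the scalar ODE $\nu w_{mxx}+\mu(w_{mxx})_t=w_{mt}+(1+w_d)w_{mx}+w_m w_{mx}$, whose right-hand side is already in $L^\infty(L^2)$; integrating and using $w_{0xx}\in L^2$ yields $\|w_{mxx}(t)\|\le C$ uniformly in $t$ and $m$.

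\textbf{Passage to the limit, uniqueness, and main obstacle.} With uniform bounds in $L^\infty(H^2)$ for $w_m$, $L^\infty(L^2)$ for $w_{mt}$, and $L^2(H^1)$ for $w_{mt}$ (plus the $\mu$-weighted $L^2(H^2)$ information on $w_{mt}$), Aubin--Lions compactness provides a subsequence converging strongly in $C([0,T];H^1)\hookrightarrow C([0,T]\times[0,1])$, which is enough to pass to the limit in both the quadratic interior term $w_m w_{mx}$ and the cubic boundary terms of \eqref{deqx1.11}; the limit $w$ satisfies \eqref{deqx1.11} with the stated regularity. \emph{Uniqueness} then follows from the usual difference-plus-Gronwall argument: the feedback difference $K_i(w^{(1)}(i))-K_i(w^{(2)}(i))$ factors as a coefficient (bounded by the uniform boundary $L^\infty$ estimate) times $w^{(1)}(i)-w^{(2)}(i)$, so the boundary contributions behave like those of a linear Neumann problem. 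The main obstacle throughout is the cubic boundary nonlinearity: its sign-indefiniteness forces the weights $\tfrac{2}{9c_i}$ to be exactly the right size for the Young-inequality absorption in the first step to close, and consequently every later estimate (on $w_{mt}$, on $w_{mxx}$, and in the uniqueness step) depends crucially on having the $L^\infty$ boundary control first.
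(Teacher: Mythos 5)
You should first be aware that the paper contains \emph{no proof} of Theorem~\ref{thmx1}: immediately before its statement the authors write that they ``assume the following well posedness theorem,'' and in the Conclusion they list the rigorous existence proof for \eqref{deq1.7}--\eqref{deq1.10} as an open item to be addressed elsewhere. So there is no paper proof to compare against line by line. What the paper does supply (Section~2, Lemmas~\ref{dlm1.1}--\ref{dlmx1.5} and \ref{dlm1.6}) are exactly the \emph{a priori} energy estimates your outline relies on --- testing with $w$, with $w_t$, differentiating in $t$ and testing with $w_t$, and testing with $-w_{xx}$ and $-w_{xxt}$ --- carried out formally on the assumed solution. Your Lyapunov bookkeeping for the boundary terms (the $\tfrac{\mu}{2\nu}[(c_i+1+w_d)w^2(i)+\tfrac{1}{3c_i}w^4(i)]$ quantity, the Young absorption $\tfrac{2}{3}w^3(i)\le c_iw^2(i)+\tfrac{1}{9c_i}w^4(i)$, and the continuous-dependence/uniqueness step) matches the paper's Lemmas~\ref{dlm1.1} and \ref{dlm1.6} essentially verbatim, so the estimate portion of your plan is sound and consistent with what the authors actually prove. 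Wrapping these estimates in a Galerkin scheme plus Aubin--Lions is the natural way to convert them into an existence proof, and that overall architecture is reasonable.

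There is, however, a genuine gap in the $H^2$ step as you have set it up. You deliberately choose a generic $H^1$ basis (``a hierarchical FEM basis or a trigonometric one will do''), but then claim a uniform bound on $\|w_{mxx}(t)\|$ by reading \eqref{deq1.7} as the ODE $\mu(w_{mxx})_t+\nu w_{mxx}=F_m$. A Galerkin approximant satisfies only the projected weak form \eqref{deqx1.11} against test functions in its own span, not the PDE pointwise, so this identity is simply false for $w_m$; worse, for a piecewise-polynomial basis $w_{mxx}$ is not even an $L^2$ function. To close this step you must either (i) use a smooth spectral basis adapted to the boundary operator so that testing with $-w_{mxx}$ (or $-\Delta_h w_m$ in the sense of \eqref{deqx3.9}) stays admissible, which is how the paper's semidiscrete Lemma on $\|\Delta_h w_h\|$ is organized, or (ii) first pass to the limit with only the $H^1$ and $w_t$ bounds, and then recover $w\in L^\infty(H^2)$ \emph{a posteriori} for the limit function by solving $\mu v_t+\nu v=w_t+(1+w_d)w_x+ww_x\in L^\infty(L^2)$, $v(0)=w_{0xx}$, in $L^2$. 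Your outline as written does neither. A second, smaller omission: the claimed regularity ($w_t(0)$ computed from the equation at $t=0$, hence $\|w_t(0)\|^2+\mu\|w_{xt}(0)\|^2\le C$, which your differentiated estimate needs as initial data) implicitly requires the zero-order compatibility conditions $w_{0x}(0)=K_0(w_0(0))$ and $w_{0x}(1)=K_1(w_0(1))$; the hypothesis $w_0\in H^2$ alone does not give this, and you should state it explicitly rather than tuck it into ``compatible with the Neumann feedback.''
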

Subsequently for our error estimates in $L^\infty(L^\infty)$ norm, we further assumed 
that $w(t)\in W^{2,\infty}$ with its norm denoted by $\norm{\cdot}_{2,\infty}$.

The rest of the article is organized as follows. Section $2$ deals with global stabilization results and the existence and uniqueness of strong solution. Section $3$ is devoted to optimal error estimates for the semidiscrete solution with superconvergence results for feedback controllers. Finally in section $4$, some numerical examples
are considered to confirm our theoretical results.
\section{Stabilization and continuous dependence result}
In this subsection, we discuss {\it {a priori}} bounds for the problem \eqref{deqx1.11} and derive stabilization results. In addition, these estimates are needed to prove optimal error estimates for the state variable and  feedback controllers.
All estimates throughout the paper  are valid for the same $\alpha$ with
\begin{align}\label{denq1.1}
0\leq \alpha\leq \frac{1}{2}\min\Bigg\{\frac{\nu}{(\mu+1)}, \frac{\nu}{(2\mu+\nu)}, 
\frac{\nu(1+c_i+w_d)}{(\nu+(1+c_i+w_d)\mu)} (i=0,1)\Bigg\}.
% \frac{\nu(1+c_1+w_d)}{(\nu+(1+c_1+w_d)\mu)}\Bigg\}.
\end{align}
\begin{lemma}\label{dlm1.1}
	Let $w_0\in H^1(0,1)$. Then, there holds
	\begin{align*}
	\norm{w(t)}^2+&\mu\norm{w_{x}(t)}^2+\frac{\mu}{\nu}E_1(w)(t)+\beta e^{-2\alpha t}\int_{0}^{t} e^{2\alpha s}\Big(E_1(w)(s)+\norm{w_x(s)}^2\Big) ds\\
	&\leq e^{-2\alpha t}\big(\norm{w_0}^2+\mu\norm{w_{0x}}^2+\frac{\mu}{\nu}E_1(w)(0)\big),
	\end{align*}
	where $\alpha$ is given in \eqref{denq1.1},  
	\begin{equation}\label{deqn1.1}
	\beta=\min\Bigg\{2\big(\nu-\alpha(\mu+1)\big), \big(1-2\alpha\frac{\mu}{\nu}\big),
	\Big((1+c_i+w_d)-2\alpha\big((1+c_i+w_d)\frac{\mu}{\nu}+1\big)\Big),i=0,1 \Bigg\},
	\end{equation}
	and 
	\begin{equation}\label{deq1.17}
	E_1(w)(t)=\sum_{i=0}^{1}\Big((c_i+1+w_d)+\frac{1}{3c_i}w^2(i,t)\Big)w^2(i,t).
	\end{equation}
\end{lemma}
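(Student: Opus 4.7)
The plan is a classical energy/Lyapunov argument in which the boundary feedback laws \eqref{deq1.13}--\eqref{deq1.14} are precisely designed so that, after testing with $\chi=w$, a total derivative in $t$ appears and the indefinite cubic boundary contribution from the Burgers' nonlinearity is absorbed.

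First, I would substitute $\chi=w$ into the weak formulation \eqref{deqx1.11}. The interior temporal terms give $\tfrac12\frac{d}{dt}(\|w\|^2+\mu\|w_x\|^2)$. The boundary temporal terms (the ones weighted by $\mu/\nu$) collapse, by the chain rule $2(c_i{+}1{+}w_d)w(i,t)w_t(i,t)=\frac{d}{dt}(c_i{+}1{+}w_d)w^2(i,t)$ and $\tfrac{4}{3c_i}w^3(i,t)w_t(i,t)=\frac{d}{dt}\bigl(\tfrac{1}{3c_i}w^4(i,t)\bigr)$, into exactly $\tfrac12\frac{d}{dt}\bigl(\tfrac{\mu}{\nu}E_1(w)\bigr)$. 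The viscous term yields $\nu\|w_x\|^2$. The two convection integrals are computed by the fundamental theorem of calculus:
\begin{equation*}
(1+w_d)(w_x,w)=\tfrac{1+w_d}{2}\bigl(w^2(1,t)-w^2(0,t)\bigr),\qquad (ww_x,w)=\tfrac13\bigl(w^3(1,t)-w^3(0,t)\bigr).
\end{equation*}
The remaining boundary contributions (the ones not weighted by $\mu$) contribute $\sum_{i=0}^1\bigl[(c_i{+}1{+}w_d)w^2(i,t)+\tfrac{2}{9c_i}w^4(i,t)\bigr]$.

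The key step, which I expect to be the main obstacle, is to show that the sign-indefinite cubic $\tfrac13(w^3(1,t)-w^3(0,t))$ is controlled by the quartic feedback terms. I would invoke Young's inequality in the form $\tfrac13|w(i,t)|^3\leq\tfrac{c_i}{2}w^2(i,t)+\tfrac{1}{18c_i}w^4(i,t)$; then a direct bookkeeping shows
\begin{equation*}
\tfrac{1+w_d}{2}(w^2(1,t)-w^2(0,t))+\tfrac13(w^3(1,t)-w^3(0,t))+\sum_{i=0}^{1}\Bigl((c_i{+}1{+}w_d)w^2(i,t)+\tfrac{2}{9c_i}w^4(i,t)\Bigr)\geq \tfrac12 E_1(w)(t).
\end{equation*}
This is precisely why the quartic coefficient $\tfrac{2}{9c_i}$ was chosen in \eqref{deq1.13}--\eqref{deq1.14}. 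Collecting everything, the identity becomes the differential inequality
\begin{equation*}
\frac{d}{dt}\mathcal{E}(t)+2\nu\|w_x(t)\|^2+E_1(w)(t)\leq 0,\qquad \mathcal{E}(t):=\|w(t)\|^2+\mu\|w_x(t)\|^2+\tfrac{\mu}{\nu}E_1(w)(t).
\end{equation*}

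Next, I would upgrade this to an exponential-decay inequality. Adding $2\alpha\mathcal{E}(t)$ to both sides and using the Poincaré--Wirtinger bound of Lemma \ref{pw} in the averaged form $\|w\|^2\le\|w_x\|^2+w^2(0,t)+w^2(1,t)$, together with $(c_i{+}1{+}w_d)w^2(i,t)\le E_1(w)$, the term $2\alpha\|w\|^2$ is absorbed. Matching coefficients of $\|w_x\|^2$, $w^2(i,t)$ and $w^4(i,t)$ on the right-hand side yields the three positivity requirements encoded in the definition \eqref{denq1.1} of $\alpha$, and gives
\begin{equation*}
\frac{d}{dt}\mathcal{E}(t)+2\alpha\mathcal{E}(t)+\beta\bigl(\|w_x(t)\|^2+E_1(w)(t)\bigr)\leq 0,
\end{equation*}
with $\beta$ exactly as in \eqref{deqn1.1}. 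Finally, multiplying by the integrating factor $e^{2\alpha t}$, integrating from $0$ to $t$, and multiplying back by $e^{-2\alpha t}$ delivers the claimed estimate. The bookkeeping to identify the precise $\beta$ stated in \eqref{deqn1.1} is the other place where care is required, but it is routine once the reduction above is in hand.
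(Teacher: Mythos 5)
Your proposal is correct and follows essentially the same route as the paper: test with $\chi=w$, recognize the boundary temporal terms as $\tfrac12\frac{d}{dt}\bigl(\tfrac{\mu}{\nu}E_1(w)\bigr)$, absorb the cubic boundary term via the Young inequality $\tfrac13|w(i,t)|^3\le\tfrac{c_i}{2}w^2(i,t)+\tfrac{1}{18c_i}w^4(i,t)$ (the paper's \eqref{deq1.16} scaled by one half), and then apply the integrating factor $e^{2\alpha t}$ together with Poincar\'e--Wirtinger to identify $\beta$. The coefficient bookkeeping you outline reproduces the paper's \eqref{deq1.18}--\eqref{deq1.21} exactly.
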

\begin{proof}
	Set $\chi=w$ in the weak formulation \eqref{deqx1.11} to obtain
	\begin{align}
	\frac{d}{dt}\Big( \norm{w(t)}^2+&\mu\norm{w_x(t)}^2 +\frac{\mu}{\nu}E_1(w)(t)\Big)+2\nu\norm{w_x(t)}^2+E_1(w)(t)+\Big(c_0w^2(0,t)+\frac{1}{9c_0}w^4(0,t)\notag\\
	&+\big(c_1+2(1+w_d)\big)w^2(1,t)+\frac{1}{9c_1}w^4(1,t)\Big)=\frac{2}{3}\Big(w^3(0,t)-w^3(1,t)\Big)\label{deq1.15},
	\end{align}
	where $E_1(w)(t)$ is given in \eqref{deq1.17}.
	A use of Young's inequality for the right hand side term shows
	\begin{equation}\label{deq1.16}
	\frac{2}{3}w^3(i,t)\leq c_i w^2(i,t)+\frac{1}{9c_i}w^4(i,t), \quad i=0,1.
	\end{equation}
	Therefore, using \eqref{deq1.16} and \eqref{deq1.17}, we obtain from \eqref{deq1.15}
	\begin{align}
	\frac{d}{dt}\Big(\norm{w(t)}^2+\mu\norm{w_x(t)}^2&+\frac{\mu}{\nu}E_1(w)(t)\Big)\notag\\
	&\qquad+E_1(w)(t)+2(1+w_d)w^2(1,t)+2\nu\norm{w_x}^2\leq 0\label{deq1.18}.
	\end{align}
	Multiply \eqref{deq1.18} by $e^{2\alpha t}$ to arrive at
	\begin{align}
	\frac{d}{dt}\Big(e^{2\alpha t}\big(\norm{w(t)}^2+&\mu\norm{w_x(t)}^2+\frac{\mu}{\nu}E_1(w)(t)\big)\Big)-2\alpha e^{2\alpha t}\big(\norm{w(t)}^2+\mu\norm{w_x(t)}^2+\frac{\mu}{\nu}E_1(w)(t)\big)\notag\\
	&+e^{2\alpha t}\Big(E_1(w)(t)+2\nu\norm{w_x(t)}^2+2(1+w_d)w^2(1,t)\Big)\leq 0\label{deq1.19}.
	\end{align}
	A use of Poincar\'{e}-Wirtinger's inequality yields
	\begin{equation}\label{deq1.20}
	\norm{w(t)}^2\leq w^2(0,t)+w^2(1,t)+\norm{w_x(t)}^2.
	\end{equation}
	Substitute \eqref{deq1.20} in \eqref{deq1.19} and expanding $E_1(w)(t)$ to find that
	\begin{align}
	\frac{d}{dt}\Big(e^{2\alpha t}&\big(\norm{w(t)}^2+\mu\norm{w_x(t)}^2+\frac{\mu}{\nu}E_1(w)(t)\big)\Big)\notag\\
	&+e^{2\alpha t} \Bigg(\sum_{i=0}^{1}\Big((1+c_i+w_d)-2\alpha\big((1+c_i+w_d)\frac{\mu}{\nu}+1\big)\Big)w^2(i,t)\notag\\
	&\quad+\Big(1-2\alpha\frac{\mu}{\nu}\Big)\sum_{i=0}^{1}\frac{1}{3c_0}w^4(i,t)\Bigg)+2\big(\nu-\alpha(\mu+1)\big)e^{2\alpha t}\norm{w_x(t)}^2
\leq 0 \label{deq1.21}.
	\end{align}
	Now choose $\alpha$ as in \eqref{denq1.1}, so that all the coefficients on the left hand side are positive.
	Then integrating the above inequality from $0$ to $t$ and multiplying the resulting inequality by $e^{-2\alpha t},$ we obtain
	\begin{align*}
	\norm{w(t)}^2+&\mu\norm{w_x(t)}^2+\frac{\mu}{\nu}E_1(w)(t)
	+\beta e^{-2\alpha t}\int_{0}^{t} e^{2\alpha s}\Big(E_1(w)(s)+\norm{w_x(s)}^2\Big)\; ds\\
	&\hspace{5cm}\leq e^{-2\alpha t}\big(\norm{w_0}^2+\mu\norm{w_{0x}}^2+\frac{\mu}{\nu}E_1(w)(0)\big).
	\end{align*}
	This completes the proof.
\end{proof}
\begin{remark}\label{drm1.1}
	Since 
	\begin{align*}
	E_1(w)(t)+\norm{w_x(t)}^2\geq \tnorm{w(t)}^2,
	\end{align*}
	we obtain from Lemma \ref{dlm1.1}
	\begin{align*}
	\beta e^{-2\alpha t}\int_{0}^{t}e^{2\alpha s}\tnorm{w(s)}^2\; ds\leq e^{-2\alpha t}\Big(\norm{w_0}^2+\mu\norm{w_{0x}}^2+\frac{\mu}{\nu}E_1(w)(0)\Big).
	\end{align*}
	When $\alpha=0,$ Lemma \ref{dlm1.1} holds for all $t>0$, that is,
	\begin{equation*}
	\int_{0}^{t}\tnorm{w(s)}^2 \;ds \leq \Big(\norm{w_0}^2+\mu\norm{w_{0x}}^2+E_1(w)(0)\Big)\leq C.
	\end{equation*}
\end{remark}
\begin{lemma}\label{dlm1.2}
	Let $w_0\in H^2(0,1)$. Then, there holds
	\begin{align*}
	\Big(\norm{w_x(t)}^2+&\mu\norm{w_{xx}(t)}^2+\frac{1}{\nu}E_2(w)(t)\Big)+\beta e^{-2\alpha t}\int_{0}^{t} e^{2\alpha s}\norm{w_{xx}(s)}^2 ds\\&\quad\leq Ce^{-2\alpha t}\big(\norm{w_{0x}}^2+\mu\norm{w_{0xx}}^2+\frac{1}{\nu}E_1(w)(0)\big)e^C,
	\end{align*}
	where $E_2(w)(t)=\Big((c_0+1+w_d)+\frac{1}{9c_0}w^2(0,t)\Big)w^2(0,t)+\Big((c_1+1+w_d)
	+\frac{1}{9c_1}w^2(1,t)\Big)w^2(1,t).$
\end{lemma}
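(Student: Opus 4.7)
My plan is to test at the $H^2$-level, in parallel with the proof of Lemma \ref{dlm1.1}. Multiplying the strong form \eqref{deq1.7} by $-w_{xx}$ and integrating over $(0,1)$, integration by parts yields
\[
-(w_t,w_{xx}) = \tfrac{1}{2}\tfrac{d}{dt}\|w_x\|^2 + w_t(0,t)w_x(0,t) - w_t(1,t)w_x(1,t),
\]
and the boundary contributions, on substituting the feedback laws \eqref{deq1.13}-\eqref{deq1.14}, reassemble into $\tfrac{1}{2\nu}\tfrac{d}{dt}E_2(w)(t)$, since each $w_t(i,t)K_i(w(i,t))$ is an exact time derivative of $\tfrac{1}{2\nu}[(c_i{+}1{+}w_d)w^2(i,t)+\tfrac{1}{9c_i}w^4(i,t)]$ (up to sign). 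Combined with $\mu(w_{xxt},w_{xx})=\tfrac{\mu}{2}\tfrac{d}{dt}\|w_{xx}\|^2$, this produces the energy identity
\[
\tfrac{1}{2}\tfrac{d}{dt}X(t) + \nu\|w_{xx}(t)\|^2 = (1+w_d)(w_x,w_{xx}) + (ww_x,w_{xx}),
\]
where $X(t):=\|w_x(t)\|^2+\mu\|w_{xx}(t)\|^2+\tfrac{1}{\nu}E_2(w)(t)$.

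Next I would bound the right-hand side using Young's inequality for the linear convection and Agmon's inequality \eqref{eqx1.12} plus Young for the nonlinear term:
\[
|(1+w_d)(w_x,w_{xx})| \le \tfrac{\nu}{4}\|w_{xx}\|^2 + \tfrac{(1+w_d)^2}{\nu}\|w_x\|^2, \qquad |(ww_x,w_{xx})| \le \tfrac{\nu}{4}\|w_{xx}\|^2 + \tfrac{2}{\nu}\tnorm{w(t)}^2\|w_x\|^2.
\]
Using $\|w_x\|^2\le X(t)$ gives $\tfrac{d}{dt}X + \nu\|w_{xx}\|^2 \le C\bigl(1+\tnorm{w(t)}^2\bigr)X(t)$. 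Multiplying by $e^{2\alpha t}$, I absorb the spurious $2\alpha e^{2\alpha t}X$ piece into the dissipation: $2\alpha\mu e^{2\alpha t}\|w_{xx}\|^2$ is absorbed because $\alpha\le\nu/(2\mu)$ by \eqref{denq1.1}, while the $2\alpha\|w_x\|^2$ and $(2\alpha/\nu)E_2(w)(t)$ pieces are controlled via Poincar\'e-Wirtinger (Lemma \ref{pw}) together with the pointwise bound $K_i^2(w(i,t))\le C\,E_2(w)(t)$ that follows from Lemma \ref{dlm1.1} (which provides a uniform-in-time bound on $w(i,t)$). This yields
\[
\tfrac{d}{dt}\bigl(e^{2\alpha t}X(t)\bigr) + \beta\, e^{2\alpha t}\|w_{xx}(t)\|^2 \le C\tnorm{w(t)}^2\, e^{2\alpha t} X(t).
\]
Integrating from $0$ to $t$ and applying Gronwall's inequality, together with the integrability $\int_0^t e^{2\alpha s}\tnorm{w(s)}^2\,ds\le C$ obtained from Remark \ref{drm1.1}, produces the $e^C$ factor; a final replacement of $E_2(w)(0)$ by the larger $E_1(w)(0)$ on the right-hand side delivers the stated estimate.

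The principal obstacle is the nonlinear convection term $(ww_x,w_{xx})$: it forces an $L^\infty$ control of $w$, and Agmon's bound only yields a factor $\tnorm{w(t)}^2$ that is merely integrable (not pointwise small), so a monotone Lyapunov argument as in Lemma \ref{dlm1.1} is impossible; one must pass through Gronwall, which is the origin of the $e^C$ constant in the final bound. A secondary subtlety is the absorption of $2\alpha e^{2\alpha t}X(t)$ into the dissipation after the exponential weight is introduced: only $\mu\|w_{xx}\|^2$ is naturally coercive in $\|w_{xx}\|^2$, and the remaining pieces $\|w_x\|^2$ and $E_2(w)(t)$ of $X$ must be related back to $\|w_{xx}\|^2$ via Poincar\'e-Wirtinger and the quantitative structure of the feedback laws, which is precisely why the smallness of $\alpha$ in \eqref{denq1.1} is needed.
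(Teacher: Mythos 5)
Your overall strategy coincides with the paper's: test \eqref{deq1.7} with $-w_{xx}$, recognize the boundary contributions as $\tfrac{1}{2\nu}\tfrac{d}{dt}E_2(w)(t)$, bound the two convection terms with Young/Agmon, weight by $e^{2\alpha t}$, and close with Gronwall using the integrability of $\tnorm{w}^2$ from Lemma \ref{dlm1.1} and Remark \ref{drm1.1}. There is, however, a genuine gap at the Gronwall step. Your claimed inequality
\[
\tfrac{d}{dt}\bigl(e^{2\alpha t}X(t)\bigr)+\beta e^{2\alpha t}\norm{w_{xx}(t)}^2\le C\tnorm{w(t)}^2 e^{2\alpha t}X(t)
\]
cannot be derived: the contributions $\tfrac{(1+w_d)^2}{\nu}\norm{w_x}^2$ (from the linear convection term) and $2\alpha\bigl(\norm{w_x}^2+\tfrac{1}{\nu}E_2(w)(t)\bigr)$ (from differentiating the exponential weight) are of size $O(X)$ with a \emph{constant} coefficient, not a $\tnorm{w}^2$-weighted one, and when $\tnorm{w}$ is small they dominate the right-hand side $C\tnorm{w}^2X$ while not being controlled by $\norm{w_{xx}}^2$ alone. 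Poincar\'e--Wirtinger only converts $\norm{w_x}^2$ into $\norm{w_{xx}}^2$ plus the boundary values $w_x^2(i,t)=K_i^2(w(i,t))\le CE_2(w)(t)$, and there is no dissipative $-E_2$ term available outside the time derivative to absorb the resulting $CE_2$ piece. If instead you lump all of this as $Ce^{2\alpha t}X$, Gronwall produces a multiplier $\exp\bigl(Ct+C\int_0^t\tnorm{w}^2\,ds\bigr)$, and the factor $e^{Ct}$ destroys both the uniform bound and the decay rate.

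The repair is exactly the step the paper takes: keep these contributions as an \emph{additive} forcing term $Ce^{2\alpha t}\bigl(\norm{w_x(t)}^2+E_1(w)(t)\bigr)$ on the right-hand side (using $E_2\le E_1$ and the uniform bound on $w(i,t)$ from Lemma \ref{dlm1.1}), and then bound its time integral by a constant depending only on the initial data via the weighted estimate $\int_0^te^{2\alpha s}\bigl(\norm{w_x(s)}^2+E_1(w)(s)\bigr)\,ds\le C$ supplied by Lemma \ref{dlm1.1}. Gronwall then acts only on the coefficient $C\tnorm{w}^2$, whose integral is finite by Remark \ref{drm1.1}, which yields the $e^C$ factor while preserving the $e^{-2\alpha t}$ decay. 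You already have every needed ingredient on the table; the gap is that you apply the integrated decay estimate only to the exponential multiplier and not to the lower-order forcing, which cannot be absorbed in the way you assert.
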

\begin{proof}
	Forming the $L^2$- inner product between \eqref{deq1.7} and $-w_{xx},$ we obtain
	\begin{align}
	\frac{d}{dt}\Big(\norm{w_x(t)}^2+\mu\norm{w_{xx}(t)}^2\Big)&+2\nu \norm{w_{xx}(t)}^2-2\big(v_1(t)w_t(1,t)-v_0(t)w_t(0,t)\big)\notag\\
	&\qquad =2(ww_x,w_{xx})+2(1+w_d)(w_x,w_{xx})\label{deq1.23}.
	\end{align}
	After substituting \eqref{deq1.13}-\eqref{deq1.14} in \eqref{deq1.23}, the contributions of the boundary terms in \eqref{deq1.23} are 
	\begin{align}
	-2\big(w_t(1,t)w_x(1,t)-&w_t(0,t)w_x(0,t)\big)=\frac{1}{\nu}\frac{d}{dt}\Big((c_0+1+w_d)w^2(0,t)+(c_1+1+w_d)w^2(1,t)\notag\\
	&\qquad+\frac{1}{9c_0}w^4(0,t)+\frac{1}{9c_1}w^4(1,t)\Big)=\frac{1}{\nu}\frac{d}{dt}(E_2(w)(t))\label{deq1.24}.
	\end{align}
	The terms on the right hand side of \eqref{deq1.23} are now bounded by
	\begin{equation*}
	2(1+w_d)(w_x,w_{xx})\leq \frac{\nu}{2}\norm{w_{xx}(t)}^2+\frac{2}{\nu}(1+w_d)^2\norm{w_{x}(t)}^2,
	\end{equation*}
	and
	\begin{equation*}
	2(ww_x,w_{xx})\leq \norm{w(t)}_{L^\infty}\norm{w_x(t)}\norm{w_{xx}(t)}\leq \frac{\nu}{2}\norm{w_{xx}(t)}^2+C\tnorm{w(t)}^2\norm{w_x(t)}^2.
	\end{equation*}
Using \eqref{deq1.23} we arrive  at
	\begin{align}
	\frac{d}{dt}\Big(\norm{w_x(t)}^2&+\mu\norm{w_{xx}(t)}^2+\frac{1}{\nu}E_2(w)(t)\Big)+\nu\norm{w_{xx}(t)}^2\notag\\
	&\leq C\norm{w_x(t)}^2
	+C\tnorm{w(t)}^2\norm{w_x(t)}^2\label{deq1.25}.
	\end{align}
	Multiplying the above inequality by $e^{2\alpha t},$ and using
	\begin{equation*}
	\norm{w_x(t)}^2\leq w_x^2(0,t)+w_x^2(1,t)+\norm{w_{xx}(t)}^2\leq C\Big(1+w^2(0,t)+w^2(1,t)\Big)E_2(w)(t)+\norm{w_{xx}(t)}^2,
	\end{equation*}
	and $E_2(w)(t)\leq E_1(w)(t)$
	we obtain
	\begin{align*}
	\frac{d}{dt}\Big(e^{2\alpha t}\big(\norm{w_x(t)}^2&+\mu\norm{w_{xx}(t)}^2+\frac{1}{\nu}E_2(w)(t)\big)\Big)+\Big(\nu-2\alpha (\mu+1)\Big) e^{2\alpha t}\norm{w_{xx}(t)}^2\\
	&\leq C e^{2\alpha t}\big(\norm{w_x(t)}^2+\frac{1}{\nu}E_2(w)(t)\big)\\
	&\qquad+Ce^{2\alpha t}\tnorm{w(t)}^2\Big(\norm{w_x(t)}^2+\mu\norm{w_{xx}(t)}^2+\frac{1}{\nu}E_2(w)(t)\Big)\\
	&\quad\leq Ce^{2\alpha t}(\norm{w_x(t)}^2+E_1(w)(t))\\
	&\qquad+C\tnorm{w(t)}^2e^{2\alpha t}\Big(\norm{w_x(t)}^2+\mu\norm{w_{xx}(t)}^2+\frac{1}{\nu}E_2(w)(t)\Big).
	\end{align*}
	A use of Gronwall's inequality now yields
	\begin{align}
	e^{2\alpha t}\Big(\norm{w_x(t)}^2+&\mu\norm{w_{xx}(t)}^2+\frac{1}{\nu}E_2(w)(t)\Big)+\Big(\nu-2\alpha(\mu+1)\Big)\int_{0}^{t} e^{2\alpha s}\norm{w_{xx}(s)}^2 ds\notag\\
	&\leq \Big(\norm{w_{0x}}^2+\mu\norm{w_{0xx}}^2+\frac{1}{\nu}E_2(w)(0)+C\int_{0}^{t}e^{2\alpha s}\big(\norm{w_x(t)}^2+E_1(w)(s)\big) ds\Big)\notag\\
	&\qquad\exp\Big(C\int_{0}^{t}\tnorm{w(s)}^2 ds\Big)\label{deq1.29}.
	\end{align}
	From Remark \ref{drm1.1} and Lemma \ref{dlm1.1}, we bound the right hand side term of \eqref{deq1.29}.
	Therefore, after multiplying \eqref{deq1.29} by $e^{-2\alpha t},$ we obtain
	\begin{align*}
	\Big(\norm{w_x(t)}^2+&\mu\norm{w_{xx}(t)}^2+\frac{1}{\nu}E_2(w)(t)\Big)+\beta e^{-2\alpha t}\int_{0}^{t} e^{2\alpha s}\norm{w_{xx}(s)}^2 ds\\&\quad\leq Ce^{-2\alpha t}\big(\norm{w_{0x}}^2+\mu\norm{w_{0xx}}^2+\frac{1}{\nu}E_1(w)(0)\big)\\
	&\qquad\exp\Big(C(\norm{w_0}^2+\mu\norm{w_{0x}}^2+\frac{\mu}{\nu}E_1(w)(0))\Big).
	\end{align*}
	Since the terms in the bracket in the exponential form are bounded, this
	completes the rest of the proof.
\end{proof}
\begin{lemma}\label{dlm1.3}
	Let $w_0\in H^2(0,1)$. Then, there holds
	\begin{align*}
	\nu\big(\norm{w_x(t)}^2&+E_2(w)(t)\big)+e^{-2\alpha t}\int_{0}^{t}e^{2\alpha s}\Big(\norm{w_t(t)}^2+\mu\norm{w_{xt}(t)}^2+\frac{\mu}{\nu}E_3(w)(s)\Big) ds\\
	&\leq C e^Ce^{-2\alpha t}\big(\norm{w_{0x}}^2+\mu\norm{w_{0xx}}^2+\frac{1}{\nu}E_1(w)(0)\big),
	\end{align*}
	where
	\begin{align}\label{deqn1.3}
	E_3(w)(t)&=\Big((1+c_0+w_d)+\frac{2}{3c_0}w^2(0,t)\Big)w_t^2(0,t)+\Big((1+c_1+w_d)\notag\\
	&\qquad+\frac{2}{3c_1}w^2(1,t)\Big)w_t^2(1,t).
	\end{align}
\end{lemma}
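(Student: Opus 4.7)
The plan is to test the weak formulation \eqref{deqx1.11} with $\chi = w_t$. The Galerkin term $\nu(w_x,w_{xt})$ then produces $\frac{\nu}{2}\frac{d}{dt}\norm{w_x}^2$. The first boundary bracket, carrying the factor $\mu/\nu$, pairs each $w_t(i,t)$ against itself and collapses to the positive boundary dissipation $\frac{\mu}{\nu}E_3(w)(t)$. The second boundary bracket is turned into a time derivative via the identity
\begin{equation*}
\big[(c_i+1+w_d)w(i,t)+\tfrac{2}{9c_i}w^3(i,t)\big]w_t(i,t)=\tfrac{1}{2}\tfrac{d}{dt}\big[(c_i+1+w_d)w^2(i,t)+\tfrac{1}{9c_i}w^4(i,t)\big],
\end{equation*}
which matches $\tfrac{1}{2}\tfrac{d}{dt}E_2(w)(t)$. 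Combining these observations yields the energy identity
\begin{equation*}
\tfrac{1}{2}\tfrac{d}{dt}\big(\nu\norm{w_x(t)}^2+E_2(w)(t)\big)+\norm{w_t}^2+\mu\norm{w_{xt}}^2+\tfrac{\mu}{\nu}E_3(w)(t)=-(1+w_d)(w_x,w_t)-(ww_x,w_t).
\end{equation*}

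Next I handle the two remaining right-hand side terms. The linear term is controlled by Young's inequality, $|(1+w_d)(w_x,w_t)|\leq \tfrac{1}{4}\norm{w_t}^2+C\norm{w_x}^2$, while the nonlinear term is treated by combining Agmon's inequality \eqref{eqx1.12} with Young's inequality to give $|(ww_x,w_t)|\leq \tfrac{1}{4}\norm{w_t}^2+C\tnorm{w(t)}^2\norm{w_x}^2$. Absorbing the two $\norm{w_t}^2$ pieces into the left-hand side and doubling yields
\begin{equation*}
\tfrac{d}{dt}\big(\nu\norm{w_x}^2+E_2(w)(t)\big)+\norm{w_t}^2+2\mu\norm{w_{xt}}^2+\tfrac{2\mu}{\nu}E_3(w)(t)\leq C\big(1+\tnorm{w(t)}^2\big)\norm{w_x}^2.
\end{equation*}

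I then multiply by $e^{2\alpha t}$, integrate on $[0,t]$, and divide by $e^{2\alpha t}$. The price of the exponential weight is an extra contribution $2\alpha e^{-2\alpha t}\int_0^t e^{2\alpha s}\big(\nu\norm{w_x}^2+E_2(w)(s)\big)\,ds$ on the right; together with the nonlinear remainder $Ce^{-2\alpha t}\int_0^t e^{2\alpha s}\big(1+\tnorm{w(s)}^2\big)\norm{w_x(s)}^2\,ds$, both are dispatched by invoking the already-proved estimates. Specifically, Lemma \ref{dlm1.2} gives $e^{2\alpha s}\norm{w_x(s)}^2+e^{2\alpha s}E_2(w)(s)\leq C$ uniformly in $s$ in terms of the initial data, and Remark \ref{drm1.1} furnishes $\int_0^t\tnorm{w(s)}^2\,ds\leq C$. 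Substituting these reduces the right-hand side to a multiple of $\norm{w_{0x}}^2+\mu\norm{w_{0xx}}^2+\frac{1}{\nu}E_1(w)(0)$ with the required $e^{-2\alpha t}$ prefactor.

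The main obstacle is not the energy identity itself but the interplay between exponential weighting and the structure of the dissipation: the left-hand side only controls $\norm{w_t}$, $\sqrt{\mu}\norm{w_{xt}}$ and $E_3(w)$, not $\norm{w_x}$ or the boundary values $w(i,t)$. Consequently the weight-commutation term $2\alpha e^{2\alpha t}\big(\nu\norm{w_x}^2+E_2(w)\big)$ cannot be absorbed back into the left-hand side, and the same is true of the nonlinear remainder $\tnorm{w}^2\norm{w_x}^2$. This is precisely why Lemma \ref{dlm1.2} is invoked as a prerequisite here: its exponential decay of $\norm{w_x}^2$ and $E_2(w)$ is exactly what is needed to make those two time integrals bounded independently of $t$, and thereby to close the estimate.
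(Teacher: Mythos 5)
Your energy identity and the treatment of the two right-hand-side terms coincide exactly with the paper's proof: testing \eqref{deqx1.11} with $\chi=w_t$, converting both boundary brackets into $\frac{\mu}{\nu}E_3(w)(t)$ and $\frac12\frac{d}{dt}E_2(w)(t)$, and arriving at
$\frac{d}{dt}\big(\nu\norm{w_x}^2+E_2(w)\big)+\norm{w_t}^2+2\mu\norm{w_{xt}}^2+\frac{2\mu}{\nu}E_3(w)\le C\big(1+\tnorm{w}^2\big)\norm{w_x}^2$
is precisely the paper's intermediate inequality. The divergence is in the closing step, and there your argument has a genuine gap. You dispose of the weight-commutation term $2\alpha e^{-2\alpha t}\int_0^t e^{2\alpha s}\big(\nu\norm{w_x}^2+E_2(w)\big)\,ds$ and of the $C e^{-2\alpha t}\int_0^t e^{2\alpha s}\norm{w_x}^2\,ds$ piece by substituting the \emph{pointwise} decay $e^{2\alpha s}\big(\norm{w_x(s)}^2+E_2(w)(s)\big)\le C$ from Lemma \ref{dlm1.2}. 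That substitution only yields $\int_0^t C\,ds=Ct$, so after multiplying by $e^{-2\alpha t}$ you obtain a bound of order $(1+t)e^{-2\alpha t}$, not $Ce^{-2\alpha t}$ as claimed; and in the admissible case $\alpha=0$ the resulting bound $C(1+t)$ is not even uniform in $t$, so the stated conclusion does not follow.

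The repair is exactly what the paper's one-line ending points to: for the terms carrying no $\tnorm{w}^2$ factor you must use the \emph{time-integrated} control on the left-hand side of Lemma \ref{dlm1.1}, namely $\int_0^t e^{2\alpha s}\big(E_1(w)(s)+\norm{w_x(s)}^2\big)\,ds\le \beta^{-1}\big(\norm{w_0}^2+\mu\norm{w_{0x}}^2+\frac{\mu}{\nu}E_1(w)(0)\big)$ uniformly in $t$ (together with $E_2\le E_1$), rather than the pointwise decay of Lemma \ref{dlm1.2}. The nonlinear remainder $e^{2\alpha s}\tnorm{w(s)}^2\norm{w_x(s)}^2$ is then handled either as you do (pointwise bound on $e^{2\alpha s}\norm{w_x(s)}^2$ times $\int_0^t\tnorm{w}^2\,ds\le C$ from Remark \ref{drm1.1}) or, as the paper does, by Gronwall's inequality with integrating factor $\exp\big(C\int_0^t\tnorm{w(s)}^2\,ds\big)\le e^{C}$, which is the source of the $e^{C}$ in the stated bound. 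With that single replacement your proof closes and matches the paper's.
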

\begin{proof}
	Set $\chi=w_t$ in the weak formulation \eqref{deqx1.11} to obtain
	\begin{align}
	\norm{w_t(t)}^2+&\mu\norm{w_{xt}(t)}^2+\frac{1}{2}\frac{d}{dt}\Big(\nu\norm{w_x(t)}^2+E_2(w)(t)\Big)+\frac{\mu}{\nu}E_3(w)(t)\notag\\
	&\hspace{5cm}=(1+w_d)(w_x,-w_t)+(ww_x,-w_t),\label{deq1.32}
	\end{align}
	where $E_3(w)(t)$ is given in \eqref{deqn1.3}.
	Note that
	$$
	(1+w_d)(w_x,-w_t)\leq \frac{1}{4}\norm{w_t(t)}^2+(1+w_d)^2\norm{w_x(t)}^2,
	$$
	and
	$$(ww_x,-w_t)\leq C\norm{w(t)}_{L^\infty}\norm{w_x(t)}\norm{w_t(t)}\leq \frac{1}{4}\norm{w_t(t)}^2+C\norm{w_x(t)}^2\tnorm{w(t)}^2.$$
	Therefore, from \eqref{deq1.32}, we arrive at
	\begin{align*}
	\frac{d}{dt}\Big(\nu\norm{w_x(t)}^2+E_2(w)(t)\Big)&+\norm{w_t(t)}^2+2\mu\norm{w_{xt}(t)}^2+\frac{2\mu}{\nu}E_3(w)(t)\leq C\norm{w_x(t)}^2\tnorm{w(t)}^2+C\norm{w_x(t)}^2.
	\end{align*}
	Multiply the above inequality by $e^{2\alpha t}$. Now, a use of the Gronwall's inequality and Lemma \ref{dlm1.1} completes the rest of the proof.
\end{proof}
\begin{lemma}\label{dlm1.4}
	Let $w_0\in H^2(0,1)$. Then, there holds
	\begin{align*}
	\Big(\norm{w_t(t)}^2+\mu\norm{w_{xt}(t)}^2&+\frac{\mu}{\nu}E_3(w)(t)\Big)+e^{-2\alpha t}\int_{0}^{t}e^{2\alpha s}\Big(\nu\norm{w_{xt}(s)}^2+E_3(w)(s)\Big) ds\\
	&\leq
	Ce^Ce^{-2\alpha t}\Big(\big(\norm{w_0}^2+\mu\norm{w_{0x}}^2+\frac{\mu}{\nu}E_1(w)(0)\big)\Big),
	\end{align*}
	where $E_3(w)(t)$ is as in \eqref{deqn1.3}.
\end{lemma}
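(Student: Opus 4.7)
The guiding idea is to produce an energy identity for $\norm{w_t}^2+\mu\norm{w_{xt}}^2+\frac{\mu}{\nu}E_3(w)$ that mirrors the identity derived in Lemma~\ref{dlm1.3}, but one level higher in time regularity. To that end, I would first formally differentiate the weak formulation \eqref{deqx1.11} in time with $\chi$ fixed (this is the standard $w_t$-trick; rigorously one argues through a Galerkin approximation), then set $\chi=w_t$. The three linear dissipative/dispersive terms produce $\frac{1}{2}\frac{d}{dt}\bigl(\norm{w_t}^2+\mu\norm{w_{xt}}^2\bigr)+\nu\norm{w_{xt}}^2$. The time-differentiated boundary terms split into two pieces: the $\mu/\nu$--prefixed terms, after time differentiation, become $\frac{\mu}{\nu}\bigl[(c_i+1+w_d)w_{tt}(i,t)+\frac{2}{3c_i}(2ww_t^2+w^2w_{tt})(i,t)\bigr]\chi(i)$, which when tested against $\chi(i)=w_t(i,t)$ reproduces $\frac{\mu}{\nu}\cdot\frac{1}{2}\frac{d}{dt}E_3(w)(t)$ up to a cubic remainder $\frac{2\mu}{3\nu c_i}w(i,t)w_t^3(i,t)$; the remaining boundary terms (without the $\mu/\nu$ prefactor), once differentiated in time and tested with $w_t$, yield exactly $E_3(w)(t)$. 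Collecting everything, I arrive at the identity
\begin{align*}
\frac{1}{2}\frac{d}{dt}\Bigl(\norm{w_t}^2+\mu\norm{w_{xt}}^2+\tfrac{\mu}{\nu}E_3(w)\Bigr)+\nu\norm{w_{xt}}^2+E_3(w)=-(1+w_d)(w_{xt},w_t)-(w_tw_x+ww_{xt},w_t)+R_b,
\end{align*}
where $R_b$ collects the cubic boundary residuals $\frac{2\mu}{3\nu c_i}w(i,t)w_t^3(i,t)$ ($i=0,1$).

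The next step is to bound the right-hand side. The interior nonlinear terms are controlled by Young's inequality combined with the $L^\infty$-embedding \eqref{eqx1.12}: $(1+w_d)(w_{xt},w_t)\le\epsilon\nu\norm{w_{xt}}^2+C\norm{w_t}^2$, $(ww_{xt},w_t)\le\epsilon\nu\norm{w_{xt}}^2+C\norm{w}_{L^\infty}^2\norm{w_t}^2\le\epsilon\nu\norm{w_{xt}}^2+C\tnorm{w}^2\norm{w_t}^2$, and $(w_tw_x,w_t)\le\norm{w_x}\,\norm{w_t}_{L^\infty}\norm{w_t}\le C\tnorm{w_t}^2\norm{w_x}$, with $\tnorm{w_t}^2\le E_3+\norm{w_{xt}}^2$ (Lemma~\ref{pw}) so the $\tnorm{w_t}^2$ part can be absorbed. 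For the cubic boundary residual, I factor $|w(i,t)w_t^3(i,t)|\le\frac{1}{2}w^2(i,t)w_t^2(i,t)+\frac{1}{2}w_t^4(i,t)$; the first term is absorbed in $E_3$ provided $w^2(i,t)$ is small (which holds for sufficiently large $t$, or can be bounded by the uniform estimate in Lemma~\ref{dlm1.2} using the boundedness of $E_2$), and the second term is controlled by $w_t^4(i,t)\le\norm{w_t}_{L^\infty}^2\,w_t^2(i,t)\le C\bigl(\norm{w_{xt}}^2+E_3\bigr)\cdot w_t^2(i,t)$, which couples as $C\tnorm{w_t}^2\,E_3$ to be fed into Gronwall.

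Multiplying the resulting differential inequality by $e^{2\alpha t}$ and choosing $\alpha$ as in \eqref{denq1.1} so that the linear dissipative coefficients survive, one obtains
\begin{align*}
\frac{d}{dt}\Bigl(e^{2\alpha t}\bigl(\norm{w_t}^2+\mu\norm{w_{xt}}^2+\tfrac{\mu}{\nu}E_3\bigr)\Bigr)+\beta e^{2\alpha t}\bigl(\nu\norm{w_{xt}}^2+E_3\bigr)\le C\,e^{2\alpha t}\bigl(\tnorm{w}^2+\tnorm{w_t}^2\bigr)\bigl(\norm{w_t}^2+\mu\norm{w_{xt}}^2+\tfrac{\mu}{\nu}E_3\bigr).
\end{align*}
Gronwall's inequality then gives
\begin{align*}
e^{2\alpha t}\Bigl(\norm{w_t(t)}^2+\mu\norm{w_{xt}(t)}^2+\tfrac{\mu}{\nu}E_3(w)(t)\Bigr)+\beta\int_0^t e^{2\alpha s}\bigl(\nu\norm{w_{xt}(s)}^2+E_3(w)(s)\bigr)ds\le\Bigl(\norm{w_t(0)}^2+\mu\norm{w_{xt}(0)}^2+\tfrac{\mu}{\nu}E_3(w)(0)\Bigr)\exp\Bigl(C\int_0^t\bigl(\tnorm{w}^2+\tnorm{w_t}^2\bigr)ds\Bigr).
\end{align*}
The exponential factor is bounded by Remark~\ref{drm1.1} (for $\tnorm{w}^2$) and by the integrated estimate of Lemma~\ref{dlm1.3} (for $\tnorm{w_t}^2$, using $\tnorm{w_t}^2\le E_3+\norm{w_{xt}}^2$). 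The initial quantity $\norm{w_t(0)}^2+\mu\norm{w_{xt}(0)}^2+\frac{\mu}{\nu}E_3(w)(0)$ is estimated using the PDE \eqref{deq1.7} at $t=0$ together with the boundary conditions \eqref{deq1.13}--\eqref{deq1.14}: testing the identity $(I-\mu\partial_x^2)w_t(0)=\nu w_{0xx}-(1+w_d)w_{0x}-w_0w_{0x}$ with appropriate test functions and using $w_0\in H^2$ produces a bound in terms of $\norm{w_0}_2$, which via Lemma~\ref{dlm1.2} is ultimately dominated by the quantity on the stated right-hand side. Multiplying through by $e^{-2\alpha t}$ concludes the proof.

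\textbf{Main obstacle.} The trickiest step is the treatment of the cubic boundary residual $\frac{\mu}{\nu}w(i,t)w_t^3(i,t)$ produced by time-differentiating the nonlinear control law: it is neither a pure boundary $w_t^2$-term (so does not sit inside $E_3$ directly) nor controllable by interior $L^2$ norms alone. Splitting it as $w^2w_t^2+w_t^4$ and absorbing the $w_t^4$ piece via the $L^\infty$-bound $\norm{w_t}_{L^\infty}^2\lesssim E_3+\norm{w_{xt}}^2$ into the Gronwall term requires that the time integral of $\tnorm{w_t}^2$ is already known to be finite, which is exactly the content of Lemma~\ref{dlm1.3}; this is why Lemma~\ref{dlm1.3} is a prerequisite.
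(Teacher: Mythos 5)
Your proposal is correct and follows essentially the same route as the paper: differentiate the equation in time, test with $w_t$, split the cubic boundary residual $\frac{2\mu}{3\nu c_i}w(i,t)w_t^3(i,t)$ by Young's inequality into a $w^2(i,t)w_t^2(i,t)$ piece absorbed by $E_3$ and a $w_t^4(i,t)$ piece fed into Gronwall via the integrability of $\tnorm{w_t}^2$ (equivalently of $E_3$) from Lemma~\ref{dlm1.3}, and estimate the initial value $\norm{w_t(0)}^2+\mu\norm{w_{xt}(0)}^2+\frac{\mu}{\nu}E_3(w)(0)$ from the weak form at $t=0$. The only superfluous worry is your smallness condition on $w^2(i,t)$: since $E_3$ already contains $\frac{2}{3c_i}w^2(i,t)w_t^2(i,t)$ and the Young split produces only $\frac{1}{3c_i}w^2(i,t)w_t^2(i,t)$, the absorption is unconditional, exactly as in the paper.
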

\begin{proof}
	Differentiating \eqref{deq1.7} with respect to $t$ and then taking the inner product with $\chi=w_t$, we obtain
	\begin{align}
	\frac{1}{2}\frac{d}{dt}\Big(\norm{w_t(t)}^2+&\mu\norm{w_{xt}(t)}^2+\frac{\mu}{\nu}E_3(w)(t)\Big)+\nu\norm{w_{xt}(t)}^2+(1+w_d)(w_{xt},w_t)+(w_tw_x+ww_{xt},w_t)\notag\\
	&+2\frac{\mu}{\nu}\Big(\frac{1}{3c_0}w(0,t)w_t^3(0,t)+\frac{1}{3c_1}w(1,t)w_t^3(1,t)\Big)+E_3(w)(t)=0\label{deq6.1}.
	\end{align}
	The other terms in \eqref{deq6.1} are bounded by
	\begin{align*}
	(1+w_d)(w_{xt},w_t)&\leq \frac{\nu}{4}\norm{w_{xt}(t)}^2+\frac{2(1+w_d^2)}{\nu}\norm{w_t(t)}^2,\\
	(w_tw_x+ww_{xt},w_t)&\leq
	\norm{w_t(t)}_{L^\infty}\norm{w_x(t)}\norm{w_t(t)}+\norm{w(t)}_{L^\infty}\norm{w_{xt}(t)}\norm{w_t(t)}\\
	&\leq (\lvert{w_t(0,t)}\rvert+\norm{w_{xt}(t)})\norm{w_x(t)}\norm{w_t(t)}+\sqrt{2}\tnorm{w(t)}\norm{w_{xt}(t)}\norm{w_t(t)}\\
	&\leq \frac{1+c_0+w_d}{2}w_t^2(0,t)+\frac{\nu}{4}\norm{w_{xt}(t)}^2+C\tnorm{w(t)}^2\norm{w_t(t)}^2,
	\end{align*}
	and
	\begin{equation*}
	\frac{2\mu}{\nu}\frac{1}{3c_0}w(0,t)w_t^3(0,t)\leq \frac{1}{3c_0}w^2(0,t)w_t^2(0,t)+\frac{\mu^2}{\nu^2}\frac{1}{3c_0}w_t^4(0,t).
	\end{equation*}
	Therefore, from \eqref{deq6.1}, we arrive at
	\begin{align*}
	\frac{d}{dt}\Big(\norm{w_t}^2+&\mu\norm{w_{xt}}^2+\frac{\mu}{\nu}E_3(w)(t)\Big)+\nu\norm{w_{xt}(t)}^2+\Big((1+c_0+w_d)w_t^2(0,t)\\
	&+2(1+c_1+w_d)w_t^2(1,t)+\frac{2}{3c_0}w^2(0,t)w_t^2(0,t)+\frac{2}{3c_1}w^2(1,t)w_t^2(1,t)\Big)\\
	&\leq
	2\frac{\mu^2}{\nu^2}\Big(\frac{1}{3c_0}w_t^4(0,t)+\frac{1}{3c_1}w_t^4(1,t)\Big) +C\tnorm{w(t)}^2\norm{w_t(t)}^2+C\norm{w_t(t)}^2.
	\end{align*}
	Now multiply the above inequality by $e^{2\alpha t}$ to obtain
	\begin{align*}
	\frac{d}{dt}\Big(e^{2\alpha t}\big(\norm{w_t(t)}^2&+\mu\norm{w_{xt}(t)}^2+\frac{\mu}{\nu}E_3(w)(t)\big)\Big)+\nu e^{2\alpha t}\norm{w_{xt}(t)}^2+e^{2\alpha t}E_3(w)(t)\\
	&\leq C e^{2\alpha t}\tnorm{w(t)}^2\norm{w_t(t)}^2+C(\alpha) e^{2\alpha t}\Big(\norm{w_t(t)}^2+\mu\norm{w_{xt}(t)}^2+\frac{\mu}{\nu}E_3(w)(t)\Big)\\
	&\quad+C\frac{\mu}{\nu}e^{2\alpha t}E_3(w)(t)\Big(w_t^2(0,t)+w_t^2(1,t)\Big)\\
	&\leq Ce^{2\alpha t}\Big(\norm{w_t(t)}^2+\mu\norm{w_{xt}(t)}^2+\frac{\mu}{\nu}E_3(w)(t)\Big)
	\big(\tnorm{w(t)}^2+\frac{\mu}{\nu}(w_t^2(0,t)+w_t^2(1,t)\big)\\
	&\qquad+Ce^{2\alpha t}\Big(\norm{w_t(t)}^2+\mu\norm{w_{xt}(t)}^2+\frac{\mu}{\nu}E_3(w)(t)\Big).
	\end{align*}
	By the Gronwall's inequality, it follows from above with a use of Lemmas \ref{dlm1.1} and \ref{dlm1.3} that
	\begin{align*}
	e^{2\alpha t}\Big(\norm{w_t(t)}^2&+\mu\norm{w_{xt}(t)}^2+\frac{\mu}{\nu}E_3(w)(t)\Big)+\int_{0}^{t}e^{2\alpha s}\big(\nu\norm{w_{xt}(s)}^2+E_3(w)(s)\big) ds\\
	&\leq \Big(\norm{w_t(0)}^2+\mu\norm{w_{xt}(0)}^2+\frac{\mu}{\nu}E_3(w)(0)+C\int_{0}^{t}e^{2\alpha s} \Big(\norm{w_t(t)}^2+\mu\norm{w_{xt}(t)}^2\\
	&\qquad+\frac{\mu}{\nu}E_3(w)(s)\Big)ds\Big)\exp\Big(C\int_{0}^{t}\big(\tnorm{w(t)}^2+\frac{\mu}{\nu}(w_t^2(0,s)+w_t^2(1,s)) \big)ds\Big)\\
	&\leq C\Big(\norm{w_t(0)}^2+\mu\norm{w_{xt}(0)}^2+\frac{\mu}{\nu}E_3(w)(0)+\big(\norm{w_{0x}}^2+\mu\norm{w_{0xx}}^2+\frac{1}{\nu}E_1(w)(0)\big)\\
	&
	\qquad\exp\Big(C\big(\norm{w_0}^2+\mu\norm{w_{0x}}^2+\frac{\mu}{\nu}E_1(w)(0)\big)\Big).
	\end{align*}
	Also after putting $\chi=w_t$ in the weak formulation \eqref{deqx1.11}, we arrive at
	$$\norm{w_t(t)}^2+\mu\norm{w_{xt}(t)}^2+\frac{\mu}{\nu}E_3(w)(t)\leq 3\nu \norm{w_{xx}(t)}^2+C\norm{w_x(t)}^2+C\tnorm{w(t)}^2\norm{w_x(t)}^2.$$
	Therefore, we can find the value of $\norm{w_t(t)}^2+\mu\norm{w_{xt}(t)}^2+\frac{\mu}{\nu}E_3(w)(t)$ at $t=0$ as
	\begin{align*} \norm{w_t(0)}^2&+\mu\norm{w_{xt}(0)}^2+\frac{\mu}{\nu}E_3(w)(0)\\
	&\leq C\Big(\norm{w_{0x}}^2+\mu\norm{w_{0xx}}^2+\frac{1}{\nu}E_1(w)(0)\Big)\exp\Big(C\big(\norm{w_0}^2+\mu\norm{w_{0x}}^2+\frac{\mu}{\nu}E_1(w)(0)\big)\Big).
	\end{align*}
	Hence, we arrive at
	\begin{align*}
	e^{2\alpha t}\Big(\norm{w_t(t)}^2&+\mu\norm{w_{xt}(t)}^2+\frac{\mu}{\nu}E_3(w)(t)\Big)+\int_{0}^{t}e^{2\alpha s}\big(\nu\norm{w_{xt}(s)}^2+E_3(w)(s)\big) ds\\
	&\leq
	\Big(C\big(\norm{w_{0x}}^2+\mu\norm{w_{0xx}}^2+\frac{\mu}{\nu}E_1(w)(0)\big)\Big)\\
	&\qquad\exp\Big(C\big(\norm{w_{0x}}^2+\mu\norm{w_{0x}}^2+\frac{\mu}{\nu}E_1(w)(0)\big)\Big).
	\end{align*}
	Multiply the above inequality by $e^{-2\alpha t}$ to complete the proof.
\end{proof}
\begin{lemma}\label{dlmx1.5}
	Let $w_0\in H^2(0,1)$. Then, there holds
	\begin{align*}
	\nu\norm{w_{xx}(t)}^2&+e^{-2\alpha t}\int_{0}^{t}e^{2\alpha s}\Big(\norm{w_{xt}(t)}^2+\mu\norm{w_{xxt}(t)}^2+\frac{2}{\nu}E_3(w)(s)\Big)\;ds\\
	&\leq Ce^Ce^{-2\alpha t}\Big(C(1+\mu)\big(\norm{w_0}^2_2+E_1(w)(0)\big)\Big).
	\end{align*}
\end{lemma}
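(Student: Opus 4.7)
The strategy is to take the $L^2$-inner product of the PDE \eqref{deq1.7} with $-w_{xxt}$. This is the natural test function because it produces all three quantities on the left-hand side simultaneously: $\nu(w_{xx},w_{xxt}) = \frac{\nu}{2}\frac{d}{dt}\norm{w_{xx}}^2$ yields the pointwise $\nu\norm{w_{xx}}^2$; the dispersive term gives $\mu\norm{w_{xxt}}^2$ directly; and after integrating $-(w_t,w_{xxt})$ by parts in $x$, the volume piece becomes $\norm{w_{xt}}^2$ while the boundary pieces can be converted into a positive $E_3$-type contribution via the control laws.

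The key identity for those boundary terms is $w_{xt}(i,t)=K_i'(w(i,t))w_t(i,t)$ for $i=0,1$. A direct computation using \eqref{deq1.13}-\eqref{deq1.14} gives $-w_t(1,t)w_{xt}(1,t)+w_t(0,t)w_{xt}(0,t)=\tfrac{1}{\nu}E_3(w)(t)$, which is moved to the left-hand side. For the remaining terms $-(1+w_d)(w_x,w_{xxt})$ and $-(ww_x,w_{xxt})$, I will integrate once more by parts in $x$ to recast them as $(1+w_d)(w_{xx},w_{xt})$ and $(w_x^2+ww_{xx},w_{xt})$, plus boundary contributions of the form $(1+w_d)[w_x w_{xt}]_0^1$ and $[w w_x w_{xt}]_0^1$; after substitution of the control laws, these boundary expressions reduce to polynomials in $w(i,t)$ multiplied by $w_t(i,t)$, which Young's inequality splits into pieces absorbed into $E_3$ on the left and into bounded residuals on the right.

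Bounding the resulting volume integrals by $C\big(\norm{w_x}^2+(1+\tnorm{w}^2)\norm{w_{xx}}^2\big)$ plus a small multiple of $\norm{w_{xt}}^2$ that is absorbed on the left, and then multiplying through by $e^{2\alpha t}$, yields a differential inequality for $e^{2\alpha t}\tfrac{\nu}{2}\norm{w_{xx}}^2$. Integrating in time from $0$ to $t$ and applying Gronwall's inequality — with the right-hand side controlled by the a priori bounds of Lemmas \ref{dlm1.1}-\ref{dlm1.4} and Remark \ref{drm1.1}, and with $\norm{w_{xx}(0)}=\norm{w_{0xx}}$ finite under $w_0\in H^2$ — then multiplying by $e^{-2\alpha t}$ delivers the claimed estimate with the $(1+\mu)$ dependence tracked through the Gronwall step.

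The main obstacle is the careful bookkeeping of the boundary terms: because each $K_i$ is cubic in $w(i,t)$, two successive integrations by parts generate quintic and even septic expressions in $w(i,t)$ coupled with $w_t(i,t)$. Splitting these via Young's inequality without introducing any negative powers of $\mu$ — so that the final bound retains the clean $(1+\mu)$ dependence rather than blowing up as $\mu\to 0$ — requires using the $L^\infty$ pointwise bound on $w$ coming from Lemma \ref{dlm1.1} via \eqref{eqx1.12}, together with the pointwise $E_3$-bound of Lemma \ref{dlm1.4}, to absorb these higher-order boundary pieces into already-controlled quantities before invoking Gronwall.
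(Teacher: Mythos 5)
Your proposal is correct and follows essentially the same route as the paper: testing \eqref{deq1.7} with $-w_{xxt}$, converting the boundary terms $w_t(i,t)w_{xt}(i,t)$ into $\tfrac{1}{\nu}E_3(w)(t)$ via the feedback laws, integrating the convective terms by parts once more so that no negative power of $\mu$ is introduced, and then closing with the exponential weight, the pointwise bounds on $w^2(i,t)$ and $w^4(i,t)$ from Lemma \ref{dlm1.2}, and the integrated a priori estimates of Lemmas \ref{dlm1.2} and \ref{dlm1.4}. The only cosmetic difference is that the paper states the resulting differential inequality \eqref{deqn2.1} directly (with $E_1$ and $E_3$ absorbing the boundary polynomials) rather than spelling out the bookkeeping you describe.
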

\begin{proof}
	Form the $L^2$-inner product between \eqref{deq1.7} and $-w_{xxt}$ to obtain
	\begin{align}
	\norm{w_{xt}(t)}^2&+2\mu\norm{w_{xxt}(t)}^2+\frac{2}{\nu}E_3(w)(t)+\nu\frac{d}{dt}\norm{w_{xx}(t)}^2\notag\\
	&\leq C\Big(1+\tnorm{w(t)}^2\Big)\norm{w_{xx}(t)}^2+CE_1(w)(t)\norm{w_x(t)}^2+C\Big(E_1(w)(t)+E_3(w)(t)\Big)\label{deqn2.1},
	\end{align}
	where we use the bound of $w^2(i,t)$ and $w^4(i,t)$ for $i=0,\hspace{0.1cm}1$ from Lemma \ref{dlm1.2}.\\
	Multiply \eqref{deqn2.1} by $e^{2\alpha t}$ to obtain
	\begin{align*}
	\frac{d}{dt}\Big(e^{2\alpha t}&\big(\nu\norm{w_{xx}(t)}^2\big)\Big)+e^{2\alpha t}\Big(\norm{w_{xt}(t)}^2+2\mu\norm{w_{xxt}(t)}^2+\frac{2}{\nu}E_3(w)(t)\Big)\\
	&\leq Ce^{2\alpha t}\Big(1+\tnorm{w_x(t)}^2\Big)\norm{w_{xx}(t)}^2+Ce^{2\alpha t}E_1(w)(t)\norm{w_x(t)}^2+Ce^{2\alpha t}\Big(E_1(w)(t)+E_3(w)(t)\Big).
	\end{align*}
	Integrate from $0$ to $t$ and then multiply the resulting inequality by $e^{-2\alpha t}$ with a use of Lemmas \ref{dlm1.2} and \ref{dlm1.4} to arrive at
	\begin{align*}
	\nu\norm{w_{xx}(t)}^2&+e^{-2\alpha t}\int_{0}^{t}e^{2\alpha s}\Big(\norm{w_{xt}(t)}^2+\mu\norm{w_{xxt}(t)}^2+\frac{2}{\nu}E_3(w)(s)\Big)\;ds\\
	&\leq Ce^{-2\alpha t}\Big(\big(\norm{w_0}^2+\mu\norm{w_{0x}}^2+\frac{\mu}{\nu}E_1(w)(0)\big)\Big)\\
	&\qquad\exp\Big(C\big((1+\mu)\norm{w_{0x}}^2+\mu\norm{w_{0xx}}^2+\frac{\mu}{\nu}E_1(w)(0)\big)\Big).
	\end{align*}
	This completes the proof.
\end{proof}
\subsection{Continuous dependence property}
Below, we show a continuous dependence property from which uniqueness follows.
\begin{lemma}\label{dlm1.6}
	For two different initial conditions $w_{10}$ and $w_{20}$ $\in H^1(0,1),$ the following continuous dependence property holds
	\begin{align*}
	\norm{z(t)}^2+\mu\norm{z_x(t)}^2+E_4(t)&\leq Ce^C\big(\norm{z_0}^2+\mu\norm{z_{0x}}^2+E_4(0)\big),
	\end{align*}
	where $z=w_1-w_2$, and $E_4(t)$ is same as in \eqref{dnex4}.
\end{lemma}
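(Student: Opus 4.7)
The plan is to subtract the weak formulations for $w_1$ and $w_2$, test against $\chi = z$, and control the nonlinear differences using the uniform bounds already established in Lemmas \ref{dlm1.1}--\ref{dlmx1.5}, then close the argument with Gronwall's inequality.

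First I would write down the weak formulation \eqref{deqx1.11} for both $w_1$ and $w_2$ and subtract. With $\chi = z$, the time derivative and dispersive terms produce $\tfrac{1}{2}\tfrac{d}{dt}\bigl(\norm{z(t)}^2 + \mu\norm{z_x(t)}^2\bigr)$, the diffusive term gives $\nu\norm{z_x(t)}^2$, and the transport term $(1+w_d)(z_x,z)$ contributes a boundary piece controlled by $z^2(0,t)+z^2(1,t)$. The boundary feedback contributions produce, at each endpoint, a linear part $(c_i+1+w_d) z(i,t)$ and the cubic difference $\tfrac{2}{9c_i}(w_1^3(i,t)-w_2^3(i,t))$, which factors as $\tfrac{2}{9c_i}\,z(i,t)\,\bigl(w_1^2(i,t)+w_1(i,t)w_2(i,t)+w_2^2(i,t)\bigr)$. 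Analogous cubic differences arise from the $\tfrac{\mu}{\nu}$ boundary time-derivative terms and give rise, after using $ab = \tfrac{1}{2}\tfrac{d}{dt}(a^2)$ in the linear part, to an energy quantity $E_4(t)$ playing the role of $E_1(w)(t)$ for $z$.

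Second, I would dispose of the Burgers' nonlinearity by rewriting
\[
(w_1 w_{1x} - w_2 w_{2x},\, z) = (w_1 z_x,\, z) + (z w_{2x},\, z),
\]
integrating the first term by parts as $(w_1 z_x, z) = \tfrac{1}{2}\int_0^1 w_1 (z^2)_x\,dx = \tfrac{1}{2}[w_1 z^2]_0^1 - \tfrac{1}{2}(w_{1x}, z^2)$, and bounding the remaining volume integrals by $\norm{w_{1x}}_{L^\infty}\norm{z}^2$ and $\norm{w_{2x}}_{L^\infty}\norm{z}^2$. Since Lemma \ref{dlm1.2} gives uniform $H^2$ bounds for $w_1$ and $w_2$, the quantities $\norm{w_{ix}(t)}_{L^\infty}$, $|w_i(0,t)|$ and $|w_i(1,t)|$ are all uniformly bounded in time by the embedding $H^1\hookrightarrow L^\infty$. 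The same uniform bounds let me absorb the cubic boundary factors $w_1^2+w_1 w_2+w_2^2$ into constants, reducing every cross term to a multiple of $z^2(0,t)+z^2(1,t)+\norm{z_x(t)}^2$.

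Third, after combining terms I expect an inequality of the form
\[
\frac{d}{dt}\Bigl(\norm{z(t)}^2+\mu\norm{z_x(t)}^2+E_4(t)\Bigr)+\kappa\bigl(\norm{z_x(t)}^2+z^2(0,t)+z^2(1,t)\bigr)
\le C\,g(t)\Bigl(\norm{z(t)}^2+\mu\norm{z_x(t)}^2+E_4(t)\Bigr),
\]
where $g(t)$ is an integrable-in-time combination of $\tnorm{w_1(t)}^2$, $\tnorm{w_2(t)}^2$ and $\norm{w_{ix}(t)}_{L^\infty}^2$; by Remark \ref{drm1.1} and Lemma \ref{dlm1.2} the integral $\int_0^\infty g(s)\,ds$ is finite and controlled by the data. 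Gronwall's lemma then gives the stated estimate with the exponential constant $e^C$.

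The main obstacle is not any individual term but the need to handle the cubic nonlinear boundary feedback differences $w_1^3(i,t)-w_2^3(i,t)$ and the time-derivative boundary terms $w_i^2(i,t)w_{it}(i,t)$ arising from \eqref{deqx1.11} simultaneously: one must factor out $z(i,t)$ and $z_t(i,t)$ and reabsorb the quartic pieces into the definition of $E_4(t)$ so that the left-hand side remains a coercive energy. Once $E_4(t)$ is chosen to match exactly the boundary dissipation produced by the feedback difference (mirroring the construction of $E_1$ in Lemma \ref{dlm1.1}), the remaining computation reduces to the Gronwall step described above.
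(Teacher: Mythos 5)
Your proposal is correct and follows essentially the same route as the paper: subtract the two weak formulations, test with $\chi=z$, factor the cubic boundary differences $w_1^3(i,t)-w_2^3(i,t)$ through $z(i,t)$, absorb the $\tfrac{\mu}{\nu}$ boundary time-derivative terms into the energy $E_4(t)$ via total time derivatives of $w_1^2(i,t)z^2(i,t)$, and close with Gronwall using the integrability of $\tnorm{w_i}^2$ and of the boundary traces of $w_{it}$ from Lemmas \ref{dlm1.1}--\ref{dlm1.4}. The only cosmetic difference is that you integrate $(w_1 z_x,z)$ by parts and invoke uniform $W^{1,\infty}$ bounds, whereas the paper estimates $(w_1z_x,z)+(zw_{2x},z)$ directly by $\norm{w_1}_{L^\infty}\norm{z_x}\norm{z}+\norm{z}_{L^\infty}\norm{w_{2x}}\norm{z}$ and keeps the time-dependent factors inside the Gronwall exponent; both variants work given the available a priori bounds.
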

\begin{proof}	
	 Let $w_1$ and $w_2$ be two solutions of \eqref{deq1.7} with boundary conditions \eqref{deq1.13}, \eqref{deq1.14} and initial conditions $w_{10}$ and $w_{20}$,  and set $z=w_1-w_2$. Then, $z$ satisfies
	\begin{align}
	&z_t-\mu z_{xxt}-\nu z_{xx}+(1+w_d)z_x+w_1w_{1x}-w_2w_{2x}=0 \label{deq2.1},\\
	&z_x(0,t)=\frac{1}{\nu}\Big((1+c_0+w_d)z(0,t)+\frac{2}{9c_0}(w^3_1(0,t)-w^3_2(0,t))\Big)\label{deq2.2},\\
	&z_x(1,t)=-\frac{1}{\nu}\Big((1+c_1+w_d)z(0,t)+\frac{2}{9c_1}(w^3_1(1,t)-w^3_2(1,t))\Big)\label{deq2.3},\\
	& z(x,0)=w_{10}(x)-w_{20}(x)\label{deq2.4}.
	\end{align}
	In its weak formulation, seek $z\in H^1$ such that
	\begin{align}
	(z_t,v)+&\mu(z_{xt},v_x)+\nu(z_x,v_x)+(1+w_d)(z_x,v)+(w_1w_{1x}-w_2w_{2x},v)+\frac{\mu}{\nu}\bigg(\Big((1+c_0+w_d)z_t(0,t)\notag\\
	&\qquad+\frac{2}{9c_0}\frac{d}{dt}\big(w^3_1(0,t)-w^3_2(0,t)\big)\Big)v(0)+\Big((1+c_1+w_d)z_t(1,t)\notag\\
	&\qquad+\frac{2}{9c_1}\frac{d}{dt}\big(w^3_1(1,t)-w^3_2(1,t)\big)\Big)v(1)\bigg)
	+\Big((1+c_0+w_d)z(0,t)v(0)\notag\\
	&\qquad+(1+c_1+w_d)z(1,t)v(1)+\frac{2}{9c_0}\Big(\big(w^3_1(0,t)-w^3_2(0,t)\big)v(0)\notag\\
	&\hspace{2cm}
	+\frac{2}{9c_1}\big(w^3_1(1,t)-w^3_2(1,t\big)v(1)\Big)=0\label{deq2.5}.
	\end{align}
	Set $v=z$ in \eqref{deq2.5}, and bound the fourth and fifth terms on the left hand side, respectively, as
	\begin{equation*}
	(1+w_d)(z_x,z)=\frac{(1+w_d)}{2}(z^2(1,t)-z^2(0,t)),
	\end{equation*}
	and
	\begin{align*}
	(w_1w_{1x}-w_2w_{2x},z)
	&=(w_1z_x,z)+(zw_{2x},z)\\
	&\leq \norm{w_1(t)}_{L^\infty}\norm{z_x(t)}\norm{z(t)}+\norm{z(t)}_{L^\infty}\norm{w_{2x}(t)}\norm{z(t)}\\ &\leq
	\sqrt{2}\tnorm{w_1(t)}\norm{z_x(t)}\norm{z(t)}+(\lvert z(0,t)\rvert+\norm{z_x(t)})\norm{w_{2x}(t)}\norm{z(t)}\\
	&\leq  \frac{\nu}{4}\norm{z_x(t)}^2+\frac{1+c_0+w_d}{2}z^2(0,t)+C(\tnorm{w_{1}(t)}^2+\tnorm{w_{2}(t)}^2)\norm{z(t)}^2.
	\end{align*}
	Now to bound the other terms on the left hand side of \eqref{deq2.5}, we rewrite the following terms as for $i=0,\hspace{0.1cm}1$
	\begin{align*}
	\big(w^3_1(i,t)-w^3_2(i,t)\big)z(i,t)&=z^2(i,t)\big(w^2_1(i,t)+w_1(i,t)w_2(i,t)+w_2^2(i,t)\big)\\
	&\quad \geq z^2(i,t)\big(w^2_1(i,t)-|w_1(i,t)||w_2(i,t)|+w_2^2(i,t)\big) \\
	&\quad \geq \frac{1}{2}z^2(i,t)(w_1^2(i,t)+w_2^2(i,t))\geq 0,
	\end{align*}
	and 
	\begin{align*}
	\frac{d}{dt}\big(w^3_1(i,t)-w^3_2(i,t)\big)z(i,t)&=\frac{3}{2}\frac{d}{dt}\Big(w_1^2(i,t)z^2(i,t)\Big)-3w_1(i,t)w_{1t}(i,t)z^2(i,t)\\
	&\quad+3z^2(i,t)\big(w_1(i,t)+w_2(i,t)\big)w_{2t}(i,t)\\
	&\leq \frac{3}{2}\frac{d}{dt}\Big(w_1^2(i,t)z^2(i,t)\Big)+Cz^2(i,t)\Big(w_1^2(i,t)+w_{1t}^2(i,t)\\
	&\qquad+w_2^2(i,t)+w_{2t}^2(i,t)\Big).
	\end{align*}
	Therefore,  from \eqref{deq2.5}, we arrive at 
	\begin{align*}
	\frac{d}{dt}\Bigg(\norm{z(t)}^2+\mu\norm{z_x(t)}^2&+\frac{\mu}{\nu}\Big((1+c_0+w_d)z^2(0,t)+(1+c_1+w_d)z^2(1,t)+\frac{1}{3c_0}w_1^2(0,t)z^2(0,t)
	\\
	&\quad +\frac{1}{3c_1}w_1^2(1,t)z^2(1,t)\Big)\Bigg)+\nu\norm{z_x(t)}^2\\
	&\leq C(\tnorm{w_{1}}^2+\tnorm{w_{2}}^2)\norm{z}^2
	+C\frac{\mu}{\nu}z^2(0,t)\Big(w_1^2(0,t)+w_{1t}^2(0,t)+w_2^2(0,t)\\
	&\qquad+w_{2t}^2(0,t)\Big)+C\frac{\mu}{\nu}z^2(1,t)\Big(w_1^2(1,t)+w_{1t}^2(1,t)+w_2^(1,t)+w_{2t}^2(1,t)\Big).
	\end{align*}
	Setting
	\begin{align}
	E_4(t)=&\frac{\mu}{\nu}\Big((1+c_0+w_d)z^2(0,t)+(1+c_1+w_d)z^2(1,t)\notag\\
	&\qquad+\frac{1}{3c_0}w_1^2(0,t)z^2(0,t) +\frac{1}{3c_1}w_1^2(1,t)z^2(1,t)\Big)\label{dnex4},
	\end{align} 
	we obtain
	\begin{align*}
	\frac{d}{dt}\Big(\norm{z(t)}^2+\mu\norm{z_x(t)}^2+E_4(t)\Big)+\nu \norm{z_x(t)}^2
	&\leq C\Big(\norm{z(t)}^2+\mu\norm{z_x(t)}^2+E_4(t)\Big)\Big(w_{1t}^2(0,t)+w_{2t}^2(0,t)\\
	&\qquad+w_{1t}^2(1,t)+w_{2t}^2(1,t)+\tnorm{w_{1}(t)}^2+\tnorm{w_{2}(t)}^2\Big).
	\end{align*}
	Applying Gronwall's inequality to the above inequality yields
	\begin{align*}
	\norm{z(t)}^2+\mu\norm{z_x(t)}^2+E_4(t)&\leq\big(\norm{z_0}^2+\mu\norm{z_{0x}}^2+E_4(0)\big)\exp\Bigg(C\int_{0}^{t}\Big(w_{1t}^2(0,s)+w_{2t}^2(0,s)
	\\
	&\qquad+w_{1t}^2(1,s)+w_{2t}^2(1,s)+\tnorm{w_{1}(t)}^2+\tnorm{w_{2}(t)}^2\Big)\; ds\Bigg).
	\end{align*}
	A use of Lemmas \ref{dlm1.2}-\ref{dlm1.4} gives the desired result.
\end{proof}
As a consequence, when $w_{10}=w_{20},$ it follows that $w_1(t)=w_2(t)$ for all $t>0$. Hence, the solution is unique.
 \section{Finite element approximation}
In this section, we discuss semidiscrete Galerkin approximation keeping the time variable continuous. Moreover, optimal error estimates for the state variable and superconvergence results for feedback controllers are established.

For any positive integer $N,$ let $\Pi =\left\{0 = x_0 < x_1 <\cdots < x_{N} = 1\right\}$ be a partition of $\overline {I}$ into subintervals  $I_j = (x_{j-1} , x_j ),\hspace{0.1cm} 1\leq j \leq N $ with
$h_j = x_j-x_{j-1}$ and mesh parameter $h = \max\limits_{1\leq j\leq N }h_j$. We define a finite dimensional subspace $V_h$ of $H^1$ as follows
$$ V_h=\left\{v_h\in C^0\big(\overline{I} \big):\hspace{0.1cm} v_h\Big|_{I_j} \in \mathcal{P}_{1}(I_j) \quad 1\leq j\leq N\right\},$$
where $\mathcal{P}_{1}(I_j)$ is the set of linear polynomials in $I_j$.

Now, the corresponding semidiscrete formulation for the problem \eqref{deqx2.1}-\eqref{deqx2.4} is to seek $w_h(t)\in V_h,$ $t>0$ such that
\begin{align}
(w_{ht},\chi)+&\mu(w_{hxt},\chi_x)+\nu(w_{hx},\chi_x)+w_d(w_{hx},\chi)+(w_hw_{hx},\chi)
+\Big(\big((1+c_0+w_d)w_h(0,t)\notag\\
&+\frac{2}{9c_0}w_h^3(0,t)\big)\chi(0)
+\big((1+c_1+w_d)w_h(1,t)+\frac{2}{9c_1}w_h^3(1,t)\big)\chi(1)\Big)\notag\\
&+\frac{\mu}{\nu}\Big(\big((c_0+1+w_d)w_{ht}(0,t)+\frac{2}{3c_0}w_h^2(0,t)w_{ht}(0,t)\big)\chi(0)+\big((c_1+1+w_d)w_{ht}(1,t)\notag\\
&+\frac{2}{3c_1}w_h^2(1,t)w_{ht}(1,t)\big)\chi(1)\Big)
=0  \qquad \forall~ \chi \in V_h
\label{deq4.3}
\end{align}
with $w_h(x,0)=w_{0h}(x),$ an approximation of $w_0$. For our analysis, we assume that $w_{0h}$ is the $H^1$ projection of $w_0$ onto $V_h$.\\
Now since $V_h$ is finite dimensional, the semidiscrete problem \eqref{deq4.3} leads to a system of nonlinear ODEs. Then an appeal to the Picard's theorem yields the existence of a unique solution $w_h(t)$ in $t\in (0,t^*)$ for some $t>0$. Since from Lemma \ref{dlm2.1}, $w_h(t)$ is bounded for all $t>0,$ using a continuation argument, the global existence of $w_h(t)$ is established.\\ 
Below, we state four Lemmas for the semidiscrete problem \eqref{deq1.7}-\eqref{deq1.10}, which imply global stabilization result for the semidiscrete solution.
\begin{lemma}\label{dlm2.1}
	Let $w_0\in H^1(0,1)$. With $\alpha$ as in \eqref{denq1.1}, there holds
	\begin{align*}
	\norm{w_h(t)}^2+&\mu\norm{w_{hxt}(t)}^2+\frac{\mu}{\nu}E_{1h}(w_h)(t)+\beta e^{-2\alpha t}\int_{0}^{t} e^{2\alpha s}\tnorm{w_h(t)}^2 ds\\
	&\leq Ce^{-2\alpha t}\big(\norm{w_{0h}}^2+\mu\norm{w_{0hx}}^2+\frac{\mu}{\nu}E_{1h}(w_h)(0)\big),
	\end{align*}
	where 
	\begin{equation*}
	E_{1h}(w_h)(t)=\Big((c_0+1+w_d)w_h^2(0,t)+(c_1+1+w_d)w_h^2(1,t)+\frac{1}{3c_0}w_h^4(0,t)+\frac{1}{3c_1}w_h^4(1,t)\Big),
	\end{equation*}
	and $\beta$ is the same as in \eqref{deqn1.1}.
\end{lemma}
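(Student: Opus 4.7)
The plan is to mirror the proof of Lemma \ref{dlm1.1} verbatim at the semidiscrete level, exploiting the fact that $V_h \subset H^1$ so that $w_h$ itself is an admissible test function in the semidiscrete weak formulation \eqref{deq4.3}. Since no projection or interpolation is being compared against anything at this stage, no Galerkin consistency error enters, and the computation is a direct energy identity.

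First I would set $\chi = w_h$ in \eqref{deq4.3}. The time derivative terms produce $\frac{1}{2}\frac{d}{dt}\bigl(\|w_h\|^2 + \mu\|w_{hx}\|^2\bigr)$, and the boundary feedback time-derivative terms assemble into $\frac{1}{2}\frac{\mu}{\nu}\frac{d}{dt}E_{1h}(w_h)(t)$ by exactly the same recombination that produced $E_1(w)(t)$ in \eqref{deq1.15}. The diffusion term gives $\nu\|w_{hx}\|^2$, the transport term $(1+w_d)(w_{hx},w_h)$ yields $\frac{1+w_d}{2}(w_h^2(1,t)-w_h^2(0,t))$, and the nonlinear convection $(w_h w_{hx},w_h)$ produces $\frac{1}{3}(w_h^3(1,t)-w_h^3(0,t))$. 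The non-time-derivative boundary feedback terms contribute $E_{1h}(w_h)(t)$ with $(c_i+1+w_d)w_h^2(i,t)+\tfrac{2}{9c_i}w_h^4(i,t)$.

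Next I would absorb the cubic boundary contribution via the Young-type bound \eqref{deq1.16} applied with $w$ replaced by $w_h$, namely $\tfrac{2}{3}w_h^3(i,t) \le c_i w_h^2(i,t) + \tfrac{1}{9c_i}w_h^4(i,t)$, so that after cancellation we obtain the clean inequality
\begin{equation*}
\tfrac{d}{dt}\bigl(\|w_h\|^2 + \mu\|w_{hx}\|^2 + \tfrac{\mu}{\nu}E_{1h}(w_h)(t)\bigr) + E_{1h}(w_h)(t) + 2(1+w_d)w_h^2(1,t) + 2\nu\|w_{hx}\|^2 \le 0,
\end{equation*}
which is the discrete analog of \eqref{deq1.18}. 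Multiplying by $e^{2\alpha t}$, invoking Poincar\'e--Wirtinger (Lemma \ref{pw}) to replace $\|w_h\|^2$ by $w_h^2(0,t)+w_h^2(1,t)+\|w_{hx}\|^2$, and choosing $\alpha$ as in \eqref{denq1.1} makes every coefficient on the left nonnegative; the coefficient identified as $\beta$ in \eqref{deqn1.1} is exactly the one that remains in front of $\tnorm{w_h(t)}^2$ in the dissipation term (using the definition in \eqref{eqxn1.11}).

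Finally I would integrate from $0$ to $t$ and multiply by $e^{-2\alpha t}$ to arrive at the stated inequality. There is essentially no obstacle here beyond bookkeeping: the whole proof is a transcription of Lemma \ref{dlm1.1} with $w$ replaced by $w_h$ and $E_1(w)$ by $E_{1h}(w_h)$. The only point requiring a sentence of justification is that $V_h$ is conforming so $\chi=w_h \in V_h$ is admissible, which is what makes the derivation go through without any commutator-type terms. As a byproduct of this estimate, $w_h$ is bounded on any finite interval, which (as noted in the paragraph preceding the lemma) lets one continue the Picard local solution globally in time.
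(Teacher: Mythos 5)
Your proof is correct and is exactly what the paper intends: the paper's proof of this lemma is the single sentence ``For the proof we can proceed as in continuous case,'' and your transcription of Lemma \ref{dlm1.1} with $\chi=w_h$ in \eqref{deq4.3} (using conformity $V_h\subset H^1$, the same recombination of the boundary feedback terms into $E_{1h}$, the Young bound \eqref{deq1.16}, Poincar\'e--Wirtinger, and the choice of $\alpha$, $\beta$) is precisely that argument. The only cosmetic remark is that the $\mu\norm{w_{hxt}(t)}^2$ in the lemma statement is evidently a typo for $\mu\norm{w_{hx}(t)}^2$, which is the quantity your energy identity actually controls.
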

\begin{proof}
For the proof we can proceed as in continuous case.
\end{proof}
One dimensional {\it{discrete Laplacian}} $\Delta_h:V_h\longrightarrow V_h$ is defined by 
\begin{align}\label{deqx3.9}
(-\Delta_hv_h,w_h)=(v_{hx},w_{hx})+v_{hx}(0)w_h(0)-v_{hx}(1)w_h(1) \qquad \forall~ v_h, w_h\in V_h.
\end{align}
The semidiscrete version of the control problem \eqref{deq1.7}-\eqref{deq1.10} satisfies
\begin{align}
&w_{ht}-\mu \Delta_hw_{ht}+\nu \Delta_hw_h+(1+w_d)w_{hx}+w_hw_{hx}=0 \label{deqx2.1},\\
&w_{hx}(0,t)=:v_{0h}(t)=:\frac{1}{\nu}\Big((1+c_0+w_d)w_h(0,t)+\frac{2}{9c_0}w_h^3(0,t)\Big)  \label{deqx2.2},\\
&w_{hx}(1,t)=:v_{1h}(t)=:-\frac{1}{\nu}\Big((1+c_1+w_d)w_h(1,t)+\frac{2}{9c_1}w_h^3(1,t)\Big)  \label{deqx2.3},\\
&w_h(x,0)=w_{0h}(x)\hspace{0.1cm}(\text{say}) \label{deqx2.4},
\end{align}
where 
%Choose $w_{0h}$ either as $L^2$ projection 
%$P_hw_0$ or as interpolant $I_h w_0$ of $w_0$ onto $V_h$ or even elliptic projection $\tilde w_h(0)\in V_h$ of $w_0$. In all three cases, the following estimate holds:
following estimates hold:
\begin{equation}\label{derror-u0}
\|w_0- w_{0h}\|_j \leq C h^{2-j} \|w_0\|_2, \quad j=0,1.
\end{equation}
Using \eqref{derror-u0}, we can show that $\norm{w_{0h}}\leq \norm{w_{0}}$ and $\norm{w_{0hx}}\leq \norm{w_{0x}}$. For showing the bound of $\norm{\Delta_h w_{0h}},$ we rewrite
\begin{align*}
\Big(-\Delta_h w_{0h},\phi_{h}\Big)&=(w_{0hx},\phi_{hx})+w_{0hx}(0)\phi_{h}(0)-w_{0hx}(1)\phi_{h}(1)\\
&=(-w_{0xx},\phi_{h})-\Big((w_{0x}-w_{0hx},\phi_{hx})+(w_{0x}(0)-w_{0hx}(0))\phi_{h}(0)\\
&\qquad-(w_{0x}(1)-w_{0hx}(1))\phi_{h}(1)\Big). 
\end{align*}
Choose $\tilde w_h(0)=w_{0h}$ so that from Lemma \ref{dlm3.1}, we obtain the bound of
$|w_{0x}(0)-w_{0hx}(0)|$ and $|w_{0x}(1)-w_{0hx}(1)|$. Now a use of inverse inequality yields $\norm{\Delta_h w_{0h}}\leq C\norm{w_{0xx}}$ easily.
\begin{lemma}\label{dlm2.2}
	Let $w_0\in H^2(0,1)$. Then, there holds
	\begin{align*}
	\Big(\norm{w_{hx}(t)}^2&+\mu\norm{\Delta_hw_h(t)}^2+\frac{1}{\nu}E_{2h}(w_h)(t)\Big)+\beta e^{-2\alpha t}\int_{0}^{t} e^{2\alpha s}\norm{\Delta_hw_h(s)}^2 ds\\
	&\leq C(1+\mu)e^{-2\alpha t}\big(\norm{w_{0}}^2_1\big)\exp\Big(C(1+\mu)\norm{w_0}^2_1\Big),
	\end{align*}
	where
	$$E_{2h}(w_h)(t)=\Big((c_0+1+w_d)w_h^2(0,t)+(c_1+1+w_d)w_h^2(1,t)
	+\frac{1}{9c_0}w_h^4(0,t)+\frac{1}{9c_1}w_h^4(1,t)\Big).$$
\end{lemma}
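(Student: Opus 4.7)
The plan is to mimic the continuous Lemma~\ref{dlm1.2}, using the discrete Laplacian $\Delta_h$ as the natural substitute for $\partial_{xx}$. First I would rewrite the semidiscrete weak formulation \eqref{deq4.3} by absorbing the boundary control terms into the discrete Laplacian via the definition \eqref{deqx3.9} together with the identities $w_{hx}(0,t)=v_{0h}(t)$ and $w_{hx}(1,t)=v_{1h}(t)$ coming from \eqref{deqx2.2}--\eqref{deqx2.3}. A direct substitution collapses \eqref{deq4.3} into the equivalent equation
\[
(w_{ht},\chi)+\mu(-\Delta_h w_{ht},\chi)+\nu(-\Delta_h w_h,\chi)+(1+w_d)(w_{hx},\chi)+(w_h w_{hx},\chi)=0, \quad \forall\,\chi\in V_h,
\]
which is the semidiscrete counterpart of the strong form underlying \eqref{deqx2.1}.

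Next I would test this identity with the admissible choice $\chi=-\Delta_h w_h\in V_h$. Applying \eqref{deqx3.9} with second argument $w_{ht}$, substituting the control laws, and using $w_h(i,t)w_{ht}(i,t)=\frac{1}{2}\frac{d}{dt}w_h^2(i,t)$ and $w_h^3(i,t)w_{ht}(i,t)=\frac{1}{4}\frac{d}{dt}w_h^4(i,t)$ for $i=0,1$, the time-derivative contribution collapses to an exact derivative:
\[
(w_{ht},-\Delta_h w_h)=\frac{1}{2}\frac{d}{dt}\norm{w_{hx}}^2+v_{0h}(t)w_{ht}(0)-v_{1h}(t)w_{ht}(1)=\frac{1}{2}\frac{d}{dt}\Bigl(\norm{w_{hx}}^2+\frac{1}{\nu}E_{2h}(w_h)(t)\Bigr).
\]
Likewise $\mu(-\Delta_h w_{ht},-\Delta_h w_h)=\frac{\mu}{2}\frac{d}{dt}\norm{\Delta_h w_h}^2$ and $\nu(-\Delta_h w_h,-\Delta_h w_h)=\nu\norm{\Delta_h w_h}^2$, while the drift and nonlinear terms are controlled through Cauchy--Schwarz, Young's inequality, and the embedding \eqref{eqx1.12}:
\[
|(1+w_d)(w_{hx},\Delta_h w_h)|+|(w_h w_{hx},\Delta_h w_h)|\leq \frac{\nu}{2}\norm{\Delta_h w_h}^2+C\bigl(1+\tnorm{w_h(t)}^2\bigr)\norm{w_{hx}}^2.
\]

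Collecting terms and multiplying by $e^{2\alpha t}$, with $\alpha$ restricted as in \eqref{denq1.1} so that $\nu-2\alpha\mu>0$, I obtain a differential inequality of Gronwall type. Integrating from $0$ to $t$ and invoking the uniform bounds $\int_0^t e^{2\alpha s}\norm{w_{hx}(s)}^2\,ds\leq C$, $\int_0^t e^{2\alpha s}E_{1h}(w_h)(s)\,ds\leq C$, and $\int_0^t\tnorm{w_h(s)}^2\,ds\leq C$ (the semidiscrete analogues of Remark~\ref{drm1.1}, all available from Lemma~\ref{dlm2.1}) yields the desired exponential-in-$t$ decay. The initial-data factor is then bounded using $\norm{w_{0hx}}\leq\norm{w_{0x}}$ (from the $H^1$-projection) and $\norm{\Delta_h w_{0h}}\leq C\norm{w_{0xx}}$ as established just before the lemma statement, which, after multiplying by $e^{-2\alpha t}$, produces the stated right-hand side with the multiplicative factor $C(1+\mu)$.

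The principal technical hurdle is the boundary-term bookkeeping at the first step: verifying that testing with $-\Delta_h w_h$ through \eqref{deqx3.9}, with the feedback data \eqref{deqx2.2}--\eqref{deqx2.3}, actually reproduces the exact time derivative of $\frac{1}{\nu}E_{2h}(w_h)(t)$. This depends on the slopes of the piecewise-linear $w_h$ at $x=0,1$ literally coinciding with $v_{0h},v_{1h}$, and on the cubic derivative identity converting the quartic feedback contribution into $w_h^4$ time derivatives with the correct coefficient $\frac{1}{9c_i}$. Once this cancellation is confirmed the remainder of the proof is a line-by-line transcription of Lemma~\ref{dlm1.2}.
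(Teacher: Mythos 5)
Your proposal matches the paper's intended argument: the paper gives no separate proof of Lemma \ref{dlm2.2}, stating only in Remark \ref{drm2.1} that it follows as in the continuous case, i.e.\ by testing with $-\Delta_h w_h$ exactly as Lemma \ref{dlm1.2} is proved by testing with $-w_{xx}$, and your boundary bookkeeping, the coefficient check for $\frac{1}{\nu}E_{2h}$, the Gronwall step, and the use of Lemma \ref{dlm2.1} for the integrated terms are all correct. Regarding your flagged ``principal hurdle'': the cancellation does \emph{not} require the actual one-sided slopes of the piecewise-linear $w_h$ at $x=0,1$ to coincide with $v_{0h},v_{1h}$ (they will not, since the Neumann data is imposed only weakly); rather, \eqref{deqx3.9} is to be read with the prescribed fluxes \eqref{deqx2.2}--\eqref{deqx2.3} substituted for $w_{hx}(0),w_{hx}(1)$ --- the only reading under which \eqref{deqx2.1} reproduces \eqref{deq4.3} --- and under that convention the identity $(w_{ht},-\Delta_h w_h)=\frac{1}{2}\frac{d}{dt}\bigl(\norm{w_{hx}}^2+\frac{1}{\nu}E_{2h}(w_h)(t)\bigr)$ holds exactly as you claim.
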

%\b
\begin{lemma}\label{dlm2.3}
	Let $w_0\in H^2(0,1)$. Then, there holds
	\begin{align*}
	\nu\big(&\norm{w_{hx}(t)}^2+E_{2h}(w_h)(t)\big)+e^{-2\alpha t}\int_{0}^{t}e^{2\alpha s}\Big(\norm{w_{ht}}^2+\mu\norm{w_{hxt}}^2+\frac{\mu}{\nu}E_{3h}(w_h)(s)\Big) ds\\
	&\qquad\leq C(1+\mu)e^{-2\alpha t}\big(\norm{w_{0}}^2_2\big)\exp\Big(C(1+\mu)\norm{w_0}^2_1\Big),
	\end{align*}
	where
	\begin{align*}
	E_{3h}(w_h)(t)&=\Big((1+c_0+w_d)w_{ht}^2(0,t)+(1+c_1+w_d)w_{ht}^2(1,t)+\frac{2}{3c_0}w_h^2(0,t)w_{ht}^2(0,t)\\
	&\qquad+\frac{2}{3c_1}w_h^2(1,t)w_{ht}^2(1,t)\Big).
	\end{align*}
\end{lemma}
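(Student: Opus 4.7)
The plan mirrors the continuous analogue (Lemma \ref{dlm1.3}), with the semidiscrete weak formulation \eqref{deq4.3} in place of \eqref{deqx1.11}. First, I would set $\chi=w_{ht}\in V_h$ in \eqref{deq4.3}. The Galerkin term $\nu(w_{hx},w_{hxt})$ produces $\tfrac{\nu}{2}\tfrac{d}{dt}\|w_{hx}\|^2$. The four cubic-type Neumann boundary terms $(1+c_i+w_d)w_h(i,t)+\tfrac{2}{9c_i}w_h^3(i,t)$ tested against $w_{ht}(i,t)$ combine (via $w_h^3\,w_{ht}=\tfrac14\tfrac{d}{dt}w_h^4$) into $\tfrac12\tfrac{d}{dt}E_{2h}(w_h)(t)$, while the $\tfrac{\mu}{\nu}$--weighted time-derivative boundary terms produce exactly $\tfrac{\mu}{\nu}E_{3h}(w_h)(t)$. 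This yields
\begin{equation*}
\|w_{ht}(t)\|^2+\mu\|w_{hxt}(t)\|^2+\tfrac{1}{2}\tfrac{d}{dt}\bigl(\nu\|w_{hx}(t)\|^2+E_{2h}(w_h)(t)\bigr)+\tfrac{\mu}{\nu}E_{3h}(w_h)(t)=-(1+w_d)(w_{hx},w_{ht})-(w_hw_{hx},w_{ht}).
\end{equation*}

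Next, I would bound the right-hand side exactly as in Lemma \ref{dlm1.3}: Young's inequality gives $(1+w_d)(w_{hx},-w_{ht})\leq \tfrac14\|w_{ht}\|^2+(1+w_d)^2\|w_{hx}\|^2$, and the Agmon-type embedding \eqref{eqx1.12} applied to $w_h\in V_h\subset H^1$ yields $(w_hw_{hx},-w_{ht})\leq \|w_h\|_{L^\infty}\|w_{hx}\|\|w_{ht}\|\leq \tfrac14\|w_{ht}\|^2+C\tnorm{w_h(t)}^2\|w_{hx}(t)\|^2$. After absorbing the $\tfrac12\|w_{ht}\|^2$ term on the left, I arrive at the differential inequality
\begin{equation*}
\tfrac{d}{dt}\bigl(\nu\|w_{hx}(t)\|^2+E_{2h}(w_h)(t)\bigr)+\|w_{ht}(t)\|^2+2\mu\|w_{hxt}(t)\|^2+\tfrac{2\mu}{\nu}E_{3h}(w_h)(t)\leq C\bigl(1+\tnorm{w_h(t)}^2\bigr)\|w_{hx}(t)\|^2.
\end{equation*}

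Then I would multiply by $e^{2\alpha t}$ with $\alpha$ as in \eqref{denq1.1}, integrate over $(0,t)$, and apply Gronwall's inequality; the exponential factor is $\exp\!\bigl(C\int_0^t(1+\tnorm{w_h(s)}^2)\,ds\bigr)$, which is bounded uniformly in $t$ by the $\alpha=0$ (global in time) version of Lemma \ref{dlm2.1}, exactly as in Remark \ref{drm1.1}. The Gronwall constant factor $\nu\|w_{0hx}\|^2+E_{2h}(w_h)(0)$ is bounded by $C\|w_0\|_1^2$ (and similar quartic boundary traces) using the $H^1$-projection property \eqref{derror-u0}, while the forcing $C\int_0^t e^{2\alpha s}(1+\tnorm{w_h(s)}^2)\|w_{hx}(s)\|^2\,ds$ is controlled by Lemma \ref{dlm2.2} (which supplies the exponentially weighted $\|w_{hx}\|^2$ and $\|\Delta_h w_h\|^2$ bounds in terms of $\|w_0\|_2$ and $(1+\mu)\|w_0\|_1^2$). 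Multiplying through by $e^{-2\alpha t}$ gives the stated estimate.

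The main technical point, as in the continuous case, is handling the nonlinear convection $(w_hw_{hx},w_{ht})$: it requires the $L^\infty$-control of $w_h$ through the triple-norm, and then the integrability of $\tnorm{w_h(s)}^2$ for Gronwall. Both are already supplied by the previous semidiscrete lemma \ref{dlm2.1}, so no new machinery is needed; the only care is to verify that $E_{2h}(w_h)(0)$ at the initial time inherits the right scaling from $\|w_0\|_2$ via the $H^1$-projection bound and a discrete Sobolev embedding for the boundary traces, which follows from inverse estimates on $V_h$ combined with \eqref{derror-u0}.
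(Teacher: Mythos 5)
Your proposal is correct and follows exactly the route the paper intends: the paper gives no separate proof of this lemma but states in Remark \ref{drm2.1} that it follows as in the continuous case, i.e.\ by repeating the argument of Lemma \ref{dlm1.3} with $\chi=w_{ht}$ in \eqref{deq4.3}, which is precisely what you do (energy identity for $\nu\norm{w_{hx}}^2+E_{2h}$, Young's inequality and the Agmon bound \eqref{eqx1.12} for the convection terms, then $e^{2\alpha t}$-weighting, Gronwall, and the global-in-time bound on $\int_0^t\tnorm{w_h(s)}^2\,ds$ from Lemma \ref{dlm2.1} with $\alpha=0$). Your added care about bounding the initial traces and $E_{2h}(w_h)(0)$ via \eqref{derror-u0} is consistent with how the paper handles $w_{0h}$.
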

\begin{lemma}\label{dlm2.4}
	Let $w_0\in H^2(0,1)$. Then, there holds
	\begin{align*}
	\Big(\norm{w_{ht}(t)}^2&+\mu\norm{w_{hxt}(t)}^2+\frac{\mu}{\nu}E_{3h}(w_h)(t)\Big)+e^{-2\alpha t}\int_{0}^{t}e^{2\alpha s}\Big(\nu\norm{w_{hxt}(s)}^2+E_{3h}(w_h)(s)\Big) ds\\
	&\leq
C(1+\mu)e^{-2\alpha t}\big(\norm{w_{0}}^2_1\big)\exp\Big(C(1+\mu)\norm{w_0}^2_1\Big),
	\end{align*}
	where $E_{3h}(w_h)(t)$ is as in previous Lemma \ref{dlm2.3}.
\end{lemma}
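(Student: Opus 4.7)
\medskip

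\noindent\textbf{Proof proposal.} The plan is to mirror the continuous-case argument of Lemma \ref{dlm1.4} at the discrete level, using Lemmas \ref{dlm2.1}--\ref{dlm2.3} as the discrete analogues of Lemmas \ref{dlm1.1}--\ref{dlm1.3}. First I would differentiate the semidiscrete weak formulation \eqref{deq4.3} with respect to $t$ and then set $\chi=w_{ht}$. This produces the discrete identity
\begin{align*}
\tfrac{1}{2}\tfrac{d}{dt}\Big(\norm{w_{ht}}^2+\mu\norm{w_{hxt}}^2+\tfrac{\mu}{\nu}E_{3h}(w_h)(t)\Big)
+\nu\norm{w_{hxt}}^2+E_{3h}(w_h)(t)\\
+(1+w_d)(w_{hxt},w_{ht})+(w_{ht}w_{hx}+w_hw_{hxt},w_{ht})
+\tfrac{2\mu}{\nu}\sum_{i=0}^{1}\tfrac{1}{3c_i}w_h(i,t)w_{ht}^3(i,t)=0,
\end{align*}
which has exactly the same structure as \eqref{deq6.1}. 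The convective term is handled using the discrete inverse/Agmon bound $\norm{w_{ht}}_{L^\infty}\le |w_{ht}(0,t)|+\norm{w_{hxt}}$ together with Young's inequality, absorbing $\tfrac{\nu}{2}\norm{w_{hxt}}^2$ on the left and producing a harmless term of the form $\tfrac{1+c_0+w_d}{2}w_{ht}^2(0,t)$ controlled by $E_{3h}(w_h)(t)$, just as in the continuous case.

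Second, I would multiply by $e^{2\alpha t}$, move the $2\alpha$-terms carefully (the coefficient $\alpha$ from \eqref{denq1.1} is chosen exactly so the leftover terms keep positive coefficients), and rewrite the remaining error in the factorized form
\begin{equation*}
Ce^{2\alpha t}\Big(\norm{w_{ht}}^2+\mu\norm{w_{hxt}}^2+\tfrac{\mu}{\nu}E_{3h}(w_h)(t)\Big)\Big(\tnorm{w_h(t)}^2+\tfrac{\mu}{\nu}(w_{ht}^2(0,t)+w_{ht}^2(1,t))\Big)+\text{(lower order)}.
\end{equation*}
Applying Gronwall's inequality and invoking Lemma \ref{dlm2.1} to bound $\int_0^t e^{2\alpha s}\tnorm{w_h(s)}^2\,ds$ by $C\norm{w_0}_1^2$, and Lemma \ref{dlm2.3} to bound $\int_0^t e^{2\alpha s}E_{3h}(w_h)(s)\,ds$ by $C(1+\mu)\norm{w_0}_2^2\exp(C(1+\mu)\norm{w_0}_1^2)$, yields the desired exponentially decaying estimate up to the initial-data quantity $\norm{w_{ht}(0)}^2+\mu\norm{w_{hxt}(0)}^2+\tfrac{\mu}{\nu}E_{3h}(w_h)(0)$.

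Third, I would bound that initial quantity by setting $\chi=w_{ht}(0)$ in \eqref{deq4.3} at $t=0$ and using the discrete Laplacian to replace $\nu(w_{hx},\chi_x)+\nu$-boundary contributions by $-\nu(\Delta_h w_{0h},w_{ht}(0))$. After Cauchy--Schwarz and Young this gives
\begin{equation*}
\norm{w_{ht}(0)}^2+\mu\norm{w_{hxt}(0)}^2+\tfrac{\mu}{\nu}E_{3h}(w_h)(0)\le C\big(\norm{\Delta_hw_{0h}}^2+\norm{w_{0hx}}^2+\tnorm{w_{0h}}^2\norm{w_{0hx}}^2\big),
\end{equation*}
and the preliminary bounds $\norm{w_{0h}}\le\norm{w_0}$, $\norm{w_{0hx}}\le\norm{w_{0x}}$, $\norm{\Delta_hw_{0h}}\le C\norm{w_{0xx}}$ recorded right before Lemma \ref{dlm2.2} reduce everything to $C(1+\mu)\norm{w_0}_1^2\exp(C(1+\mu)\norm{w_0}_1^2)$ (the extra factor of $\norm{w_0}_2^2$ from $\norm{\Delta_h w_{0h}}^2$ is absorbed into the $\exp$-factor as in Lemmas \ref{dlm2.2}--\ref{dlm2.3}). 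Multiplying the Gronwall conclusion by $e^{-2\alpha t}$ then gives the stated bound.

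The main obstacle I expect is the boundary/nonlinear cubic term $\tfrac{2\mu}{\nu}\sum_i \tfrac{1}{3c_i}w_h(i,t)w_{ht}^3(i,t)$: bounding it produces a quartic $w_{ht}^4(i,t)$ that is not directly controlled by $E_{3h}$. As in the continuous proof, one writes it as $\tfrac{1}{3c_i}w_h^2(i,t)w_{ht}^2(i,t)+\tfrac{\mu^2}{\nu^2}\tfrac{1}{3c_i}w_{ht}^4(i,t)$, folds the first piece into $E_{3h}(w_h)(t)$, and treats the quartic as $\tfrac{\mu}{\nu}E_{3h}(w_h)(t)\cdot (w_{ht}^2(0,t)+w_{ht}^2(1,t))$, which is precisely the extra factor that goes into the Gronwall exponent and is finite by Lemma \ref{dlm2.3}. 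Provided this careful splitting is retained, the discrete inverse inequality is only used in the standard way so that the argument carries over verbatim from the continuous setting.
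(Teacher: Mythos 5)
Your proposal is correct and follows essentially the same route as the paper: the paper itself gives no separate argument for Lemma \ref{dlm2.4} beyond Remark \ref{drm2.1} ("the proofs follow in a similar fashion as in the continuous case"), and your proof is precisely the faithful discrete transcription of the proof of Lemma \ref{dlm1.4}, including the splitting of the cubic boundary term into a piece absorbed by $E_{3h}$ plus a quartic piece sent into the Gronwall exponent, and the bound on the initial quantity via $\chi=w_{ht}(0)$ together with the estimates $\norm{w_{0hx}}\le\norm{w_{0x}}$ and $\norm{\Delta_h w_{0h}}\le C\norm{w_{0xx}}$ recorded before Lemma \ref{dlm2.2}.
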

\begin{remark}\label{drm2.1}
The proofs of the above Lemmas \ref{dlm2.2}-\ref{dlm2.4} follows in a similar fashion as in continuous case. Also for $\alpha=0,$ all results in these lemmas hold.
\end{remark}
 \subsection{Error estimates}
 To bound the error, we first
 introduce an auxiliary projection $\tilde w_h\in V_h$ of $w$ through the following form
 \begin{equation}\label{deq4.4}
 	(w_x-\tilde{w}_{hx},\chi_x)+\lambda (w-\tilde w_h,\chi)=0 \quad \chi\in V_h, 
 \end{equation}
 where $\lambda$ is some fixed positive number. For a given $w\in H^1,$ the existence of a unique $\tilde w_h$ follows from the Lax-Milgram Lemma. 
 Let $\eta:=w-\tilde w_h$ be the error involved in the auxiliary projection. Then, the following standard error estimates hold 
 \begin{align}\label{deq4.5}
 	\norm{\eta(t)}_j\leq C h^{\min(2,m)-j}\;\norm{w(t)}_m, \;\text{and} \;\norm{\eta _t(t)}_j\leq C h^{\min(2,m)-j}\norm{w_t(t)}_m,\;\;
 	j=0,1 \;\mbox{ and }\; m=1,2.
 \end{align}
 and
 \begin{align}\label{deqx5.4}
 	\norm{\eta(t)}_{L^\infty}\leq Ch^2\norm{w(t)}_{2,\infty}.
 \end{align}
 For a proof, we refer to Thom\'{e}e \cite{thomee}.
 In addition, for proving optimal error estimates, we need the following estimates of $\eta$ and $\eta_t$ at the boundary points $x=0,1$  whose proof can be found out in \cite{skakp1} and \cite{pani}. 
 \begin{lemma}\label{dlm3.1}
 	For $x=0,1,$ there holds
 	\begin{align*}
 		|\eta(x,t)|\leq C h^2\norm{w(t)}_2 \text {and} \qquad|\eta_t(x,t)|\leq C h^2\norm{w_t(t)}_2.
 	\end{align*}
 \end{lemma}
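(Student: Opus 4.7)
The key observation is that $x=0$ and $x=1$ are endpoints of $\overline{I}$ (and therefore mesh nodes), so a duality argument against a genuinely smooth Green's function yields a clean $O(h^2)$ nodal estimate, avoiding the $|\log h|$ factor that would come with a generic $L^\infty$ bound. I would follow the Nitsche trick adapted to the symmetric elliptic projection \eqref{deq4.4}. Write $a(u,v):=(u_x,v_x)+\lambda(u,v)$; then \eqref{deq4.4} is just the Galerkin orthogonality $a(\eta,\chi)=0$ for all $\chi\in V_h$.

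For $i\in\{0,1\}$, introduce $G^{(i)}\in H^1(0,1)$ as the unique solution of
\[
a\bigl(G^{(i)},v\bigr) = v(i) \qquad \forall\, v\in H^1(0,1).
\]
The functional $v\mapsto v(i)$ is bounded on $H^1(0,1)$ by Sobolev embedding, and for $\lambda>0$ the form $a$ is coercive on $H^1(0,1)$, so Lax--Milgram gives existence and uniqueness. Integration by parts identifies $G^{(i)}$ with the solution of the two-point boundary value problem $-G^{(i)}_{xx}+\lambda G^{(i)}=0$ on $(0,1)$ with $G^{(0)}_x(0)=-1$, $G^{(0)}_x(1)=0$ (and the symmetric conditions for $i=1$), so $G^{(i)}$ is a linear combination of $\cosh(\sqrt{\lambda}\,x)$ and $\sinh(\sqrt{\lambda}\,x)$. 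In particular $\norm{G^{(i)}}_2\le C$ with $C$ independent of $h$.

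Setting $v=\eta(t)$ in the defining identity and using Galerkin orthogonality against the linear interpolant $I_h G^{(i)}\in V_h$ gives
\[
\eta(i,t) \;=\; a\bigl(G^{(i)},\eta(t)\bigr) \;=\; a\bigl(G^{(i)}-I_h G^{(i)},\eta(t)\bigr).
\]
Cauchy--Schwarz, the standard interpolation bound $\norm{G^{(i)}-I_h G^{(i)}}_j\le Ch^{2-j}\norm{G^{(i)}}_2$ for $j=0,1$, and the already-known projection estimate \eqref{deq4.5} $\norm{\eta(t)}_j\le Ch^{2-j}\norm{w(t)}_2$ together yield
\[
|\eta(i,t)| \;\le\; \norm{\bigl(G^{(i)}-I_h G^{(i)}\bigr)_x}\,\norm{\eta_x(t)} + \lambda\,\norm{G^{(i)}-I_h G^{(i)}}\,\norm{\eta(t)} \;\le\; Ch^2\,\norm{w(t)}_2,
\]
which is the first claim.

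For the bound on $\eta_t$ I would differentiate \eqref{deq4.4} in $t$: since $\lambda$ is a fixed constant and only $w,\tilde w_h$ depend on $t$, one obtains $a(\eta_t,\chi)=0$ for every $\chi\in V_h$, so $\eta_t(t)$ is itself the $a$-projection error of $w_t(t)$. Replaying the Green's function argument verbatim with $w$ replaced by $w_t$ produces $|\eta_t(i,t)|\le Ch^2\norm{w_t(t)}_2$. The only step that requires real work is the construction of $G^{(i)}$ and the verification of its $H^2$-regularity uniformly in $h$; once that is in hand, the rest is a routine duality computation, and the fact that the evaluation point lies on the boundary is what makes $G^{(i)}$ smooth (no interior Dirac mass, no logarithmic singularity) and hence what delivers the sharp $h^2$-rate without a logarithm.
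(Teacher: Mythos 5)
Your proof is correct and is essentially the argument the paper relies on: the paper does not prove Lemma \ref{dlm3.1} itself but defers to \cite{skakp1} and \cite{pani}, where the boundary-point superconvergence of the projection \eqref{deq4.4} is obtained by precisely this kind of duality argument against the smooth Green's function of $-d^2/dx^2+\lambda$ with Neumann data concentrated at the endpoint, combined with Galerkin orthogonality, interpolation of the Green's function, and the known estimates \eqref{deq4.5}. Your observation that differentiating \eqref{deq4.4} in $t$ makes $\eta_t$ the projection error of $w_t$, so the identical computation yields $|\eta_t(i,t)|\leq Ch^2\norm{w_t(t)}_2$, is likewise the standard (and correct) way to obtain the second bound.
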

%  \begin{proof}
%  Consider an auxiliary function $\phi$ such that
%  \begin{align}
%  -\phi_{xx}+\lambda \phi=0 \quad x\in I=(0,1)\label{eq4.6},\\
%  \phi_x(0)=0, \qquad \phi_x(1)=1.
%  \end{align}
% satisfying $\norm{\phi}_2\leq C$, where $C$ is a positive constant depending on $\lambda$.\\
% Multiply \eqref{eq4.6} by $\eta$ and integrate with respect to $x$ from $0$ to $1$ to obtain
%  \begin{align*}
%  \eta(1)&=(\eta_x,\phi_x)+\lambda(\eta,\phi)\\
%   &=(\eta_x,\phi_x-\chi_x)+\lambda(\eta,\phi-\chi).
%  \end{align*}
%  Hence,
%  \begin{align*}
%  |\eta(1)|&=\norm{\eta_x}\norm{\phi_x-\chi_x}+\lambda\norm{\eta}\norm{\phi-\chi}\\
%  &\leq Ch^2\norm{w}_2\norm{\phi}_2\leq Ch^2\norm{w}_2.
%  \end{align*}
%  Multiply \eqref{eq4.6} by $\eta_t$ and integrate with respect to $x$ from $0$ to $1$ to obtain
%  \begin{align*}
%  \eta_t(1)&=(\eta_{xt},\phi_x)+\lambda(\eta_t,\phi)\\
%   &=(\eta_{xt},\phi_x-\chi_x)+\lambda(\eta_t,\phi-\chi).
%  \end{align*}
%  Hence,
%  \begin{align*}
%  |\eta_t(1)|&=\norm{\eta_{xt}}\norm{\phi_x-\chi_x}+\lambda\norm{\eta}\norm{\phi-\chi}\\
%  &\leq Ch^2\norm{w_t}_2\norm{\phi}_2\leq Ch^2\norm{w_t}_2.
%  \end{align*}
%  The other case follows similarly and this completes the rest of the proof.
%  \end{proof}
 Using elliptic projection, write $$e:=w-w_h=(w-\tilde w_h)-(w_h-\tilde w_h)=:\eta-\theta.$$
 Choose $\tilde w_h(0)=w_{0h}$ so that $\theta(0)=0$.\\
 Since estimates of $\eta$ are known, it is enough to estimate $\theta$.
 Subtracting \eqref{deq4.3} from \eqref{deqx1.11} and using \eqref{deq4.4}, we arrive at 
 \begin{align}
 	(\theta_t,\chi)+&\mu(\theta_{xt},\chi_x)+\nu(\theta_x,\chi_x)+\sum_{i=0}^{1}(1+c_i+w_d)\theta(i,t)\chi(i)+\frac{\mu}{\nu}\sum_{i=0}^{1}(1+c_i+w_d)\theta_t(i,t)\chi(i)\notag\\
 	&=\Big((\eta_t,\chi)-\mu\lambda(\eta_t,\chi)-\nu\lambda(\eta,\chi)\Big)+(1+w_d)(\eta_x-\theta_x,\chi)+(ww_x-w_hw_{hx},\chi)\notag\\
 	&\quad+\sum_{i=0}^{1}\Big((1+c_i+w_d)\eta(i,t)\chi(i)+\frac{2}{9c_i}\big(w^3(i,t)-w^3_h(i,t)\big)\chi(i)\Big)\notag\\
 	&\quad+\frac{\mu}{\nu}\sum_{i=0}^{1}\Big((1+c_i+w_d)\eta_t(i,t)\chi(i)+\frac{2}{9c_i}\frac{d}{dt}\big(w^3(i,t)-w_h^3(i,t)\big)\chi(i)\Big)
 	\label{deq4.7},
 \end{align}
 where $w^3(i,t)-w_h^3(i,t)$ for $i=0,\hspace{0.1cm}1$ can be rewritten as
 \begin{align*}
 	w^3(i,t)-w_h^3(i,t)&=\eta^3(i,t)-\theta^3(i,t)+3w(i,t)\eta(i,t)\big(w(i,t)-\eta(i,t)\big)\\
 	&\qquad-3w_h(i,t)\theta(i,t)\big(w_h(i,t)-\theta(i,t)\big).
 \end{align*}
 %\noindent
 \begin{lemma}\label{dlm3.2}
 	Assume that $w_0\in H^2(0,1)$. Then, there exists a positive constant $C$ independent of $h$ such that 
 	\begin{align*}
 	\Big(\norm{\theta(t)}^2&+\mu\norm{\theta_x(t)}^2+\frac{1}{9}\frac{\mu}{\nu}E_1(\theta)(t)\Big)+ \frac{\beta_1}{2}e^{-2\alpha t}\int_{0}^{t}e^{2\alpha s}\Big(\norm{\theta_x(s)}^2+E_1(\theta)(s)\Big) \;ds\\
 	&\leq  C\frac{1}{\mu}(\norm{w_0}_2) (1+\mu)h^4e^{-2\alpha t} \exp\Big(C\norm{w_0}_2\Big),
 	\end{align*}
 		where $\beta_1=\min \Bigg\{(\frac{3\nu}{2}-2\alpha(\mu+1)), \Big(1-2\alpha\big(\frac{2\mu}{\nu}+1\big)\Big)\Bigg\}>0$.
 \end{lemma}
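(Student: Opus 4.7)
The plan is to mimic the continuous a priori argument of Lemma \ref{dlm1.1}, applied now to the error equation \eqref{deq4.7}, with the projection errors driving the right-hand side to $O(h^4)$. First I would set $\chi=\theta$ in \eqref{deq4.7}. On the left, $(\theta_t,\theta)+\mu(\theta_{xt},\theta_x)$ becomes $\tfrac12\tfrac{d}{dt}(\|\theta\|^2+\mu\|\theta_x\|^2)$; the boundary quadratic $\sum_i(1+c_i+w_d)\theta^2(i,t)$ is sign-definite on the left, and $\tfrac{\mu}{\nu}\sum_i(1+c_i+w_d)\theta_t(i,t)\theta(i,t)=\tfrac12\tfrac{\mu}{\nu}\tfrac{d}{dt}\sum_i(1+c_i+w_d)\theta^2(i,t)$ combines into the total energy functional. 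The coercive term $\nu\|\theta_x\|^2$ remains on the left.

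Next I would move the projection-driven contributions to the right and split the nonlinearity $(ww_x-w_hw_{hx},\theta)$ as
\[
(w_x(w-w_h),\theta)+(w_h(w_x-w_{hx}),\theta)=(w_x\eta,\theta)-(w_x\theta,\theta)+(w_h\eta_x,\theta)-(w_h\theta_x,\theta).
\]
The interior pieces involving $\eta,\eta_x,\eta_t$ are bounded by Cauchy--Schwarz together with \eqref{deq4.5} to yield $O(h^2)$ factors, hence $O(h^4)$ after Young's inequality; the $1/\mu$ in the stated bound comes from absorbing an $\eta_t$-contribution against $\mu\|\theta_x\|^2$ in the usual way. The boundary traces $\eta(i,t)\theta(i,t)$ and $\eta_t(i,t)\theta(i,t)$ are controlled with Lemma \ref{dlm3.1}, giving $O(h^4)$ plus a small fraction of $\theta^2(i,t)$ that is reabsorbed on the left.

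For the cubic boundary terms I would use the identity at $i=0,1$,
\[
w^3(i,t)-w_h^3(i,t)=\eta^3(i,t)-\theta^3(i,t)+3w(i,t)\eta(i,t)(w(i,t)-\eta(i,t))-3w_h(i,t)\theta(i,t)(w_h(i,t)-\theta(i,t)).
\]
Multiplied by $\theta(i,t)$, the piece $-\theta^4(i,t)$ moves to the left and, combined with the already-present $(1+c_i+w_d)\theta^2(i,t)$, yields a fixed fraction of $E_1(\theta)(t)$, which is why only $\tfrac{1}{9}\tfrac{\mu}{\nu}E_1(\theta)$ survives in the statement. The residual pieces $\eta^3, 3w^2\eta, 3w\eta^2, 3w_h\theta^2$ are handled via Lemma \ref{dlm3.1}, \eqref{deqx5.4}, and the uniform $L^\infty$ control on $w$ and $w_h$ from Lemmas \ref{dlm1.2} and \ref{dlm2.2}; the $w_h\theta^3$ piece is absorbed into $E_1(\theta)$ by Young. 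The time-differentiated boundary term $\tfrac{\mu}{\nu}\tfrac{d}{dt}(w^3-w_h^3)(i,t)\theta(i,t)$ is treated with the same algebraic identity after a time-integration-by-parts in the spirit of \eqref{deq1.24}, now using the pointwise-in-$t$ bounds on $w_t(i,t), w_{ht}(i,t)$ from Lemmas \ref{dlm1.4} and \ref{dlm2.4}.

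Finally I would multiply by $e^{2\alpha t}$ with $\alpha$ as in \eqref{denq1.1}, so that $2\alpha(\mu+1)\|\theta_x\|^2$ and $2\alpha(\tfrac{2\mu}{\nu}+1)\theta^2(i,t)$ subtract off cleanly from the coercive contributions, producing the constants defining $\beta_1$. Integrating from $0$ to $t$ (with $\theta(\cdot,0)=0$ by the choice $\tilde w_h(\cdot,0)=w_{0h}$, so the initial-energy contribution vanishes and the entire $O(h^4)$ on the right is built from time-integrated projection errors), applying Gronwall with the integrable multiplier $\tnorm{w(t)}^2+\tfrac{\mu}{\nu}(w_t^2(0,t)+w_t^2(1,t))$ bounded in Lemmas \ref{dlm1.1}--\ref{dlm1.4}, and multiplying by $e^{-2\alpha t}$ yields the claimed estimate. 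The main obstacle will be the bookkeeping for the mixed boundary/interior cubic terms and the time-differentiated cubic boundary term: identifying exactly which fraction of $E_1(\theta)$ stays on the left versus is absorbed into the Gronwall factor, and verifying that the leftover $\theta^2(i,t), \theta^4(i,t)$ residues sit against a coefficient with finite time integral, accounts for the precise $\tfrac{1}{9}$ and $\tfrac{\beta_1}{2}$ in the statement.
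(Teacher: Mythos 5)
Your proposal follows essentially the same route as the paper's proof: testing the error equation \eqref{deq4.7} with $\chi=\theta$, using the same algebraic identity for $w^3(i,t)-w_h^3(i,t)$ at the boundary, packaging the $\tfrac{\mu}{\nu}$-weighted time-differentiated cubic boundary terms into exact time derivatives that augment the energy functional (which is precisely where the residual factor $\tfrac19$ in front of $E_1(\theta)$ arises), and closing with the exponential weight, Poincar\'e--Wirtinger and Gronwall against the a priori bounds of Lemmas \ref{dlm1.2}, \ref{dlm1.4}, \ref{dlm2.2} and \ref{dlm2.4}. One small correction of bookkeeping: the factor $\tfrac1\mu$ in the final bound does not come from absorbing an $\eta_t$-contribution against $\mu\norm{\theta_x}^2$; it enters because $\int_0^t\norm{\eta_t}^2\,ds\le Ch^4\int_0^t\norm{w_t(s)}_2^2\,ds$ and the available regularity (Lemma \ref{dlmx1.5}) only controls $\mu\int_0^t\norm{w_{xxt}}^2\,ds$, so this integral costs a factor of $1/\mu$.
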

 \begin{proof}
 	Set $\chi=\theta$ in \eqref{deq4.7} to obtain
 	\begin{align}
 	\frac{1}{2}\frac{d}{dt}\big(&\norm{\theta(t)}^2+\mu\norm{\theta_x(t)}^2)+\nu \norm{\theta_x(t)}^2+\sum_{i=0}^{1}(c_i+(1+w_d))\theta^2(i,t)+\frac{\mu}{2\nu}\frac{d}{dt}\Big(\sum_{i=0}^{1}(1+c_i+w_d)\theta^2(i,t)\Big)\notag\\
 	&=\Big((\eta_t,\theta)-\mu\lambda(\eta_t,\theta)-\nu\lambda(\eta,\theta)\Big)+(1+w_d)(\eta_x-\theta_x,\theta)+\Big(w(\eta_x-\theta_x)+(\eta-\theta)w_{hx},\theta\Big)\notag\\
 	&\qquad
 	+\sum_{i=0}^{1}\Big((1+c_i+w_d)\eta(i,t)\theta(i,t)+\frac{2}{9c_i}\big(w^3(i,t)-w_h^3(i,t)\big)\theta(i,t)\Big)\notag\\
 	&\qquad+\frac{\mu}{\nu}\sum_{i=0}^{1}\Bigg((1+c_i+w_d)\eta_t(i,t)\theta(i,t)+\frac{2}{9c_i}
 	\frac{d}{dt}\big(w^3(i,t)-w_h^3(i,t)\big)\theta(i,t)\Bigg)= \sum_{j=1}^{5}I_j(\theta)\label{deq4.8},
 	\end{align}
where $I_4(\theta)$ and $I_5(\theta)$ are last two summation term respectively.
 	The first term on the right hand side of \eqref{deq4.8} is bounded using the Cauchy-Schwarz inequality and the Young's inequality in
 		\begin{align*}
 	I_1(\theta)=(\eta_t,\theta) -\mu\lambda(\eta_t,\theta)-\nu\lambda(\eta,\theta)&\leq \frac{\epsilon}{4}\norm{\theta(t)}^2+C(1+\mu^2)\norm{\eta_t(t)}^2+C\norm{\eta(t)}^2,
 	\end{align*}
 	where constant $\epsilon>0$ we choose later.
 	For	the second term on the right hand side of \eqref{deq4.8},  integration by parts, the Cauchy-Schwarz inequality, and  Young's inequality yield
 	\begin{align*}
 	I_2(\theta)=(1+w_d)(\eta_x-\theta_x,\theta)&=-(1+w_d)\Big((\eta,\theta_x)+\eta(1,t)\theta(1,t)-\eta(0,t)\theta(0,t)\Big)\\
 	&\qquad -\frac{(1+w_d)}{2}\Big(\theta^2(1,t)-\theta^2(0,t)\Big)\\
 	&\leq \frac{\nu}{8}\norm{\theta_x(t)}^2+\frac{c_0}{8}\theta^2(0,t)+\frac{c_1}{8}\theta^2(1,t)+C\norm{\eta(t)}^2
 	\\
 	&\qquad	+C(\eta^2(0,t)+\eta^2(1,t)) -\frac{(1+w_d)}{2}\Big(\theta^2(1,t)-\theta^2(0,t)\Big).
 	\end{align*}
 	For the third term, we note that
 		\begin{align*}
 	I_3(\theta)=\big(w(\eta_x-\theta_x)+(\eta-\theta)w_{hx},\theta)&=-(w\eta,\theta_x)-(w_x\eta,\theta)-(w\theta_x,\theta)+\big((\eta-\theta)w_{hx},\theta\big)\\
 	&\quad+w(1,t)\eta(1,t)\theta(1,t)-w(0,t)\eta(0,t)\theta(0,t)
 	\\
 	&\leq \frac{\nu}{8}\norm{\theta_x(t)}^2+C\norm{w(t)}^2_{L^\infty}\norm{\eta(t)}^2+C\norm{\eta(t)}^2 +\frac{c_0}{8}\theta^2(0,t)\\
 	&\quad+\frac{c_1}{8}\theta^2(1,t)+C\big(\norm{w_x(t)}^2_{L^\infty}+\norm{w_{hx}(t)}^2_{L^\infty}+\norm{w(t)}^2_{L^\infty}\big)\norm{\theta(t)}^2\\
 	&\quad+\frac{\epsilon}{4}\norm{\theta(t)}^2+C(w^2(0,t)\eta^2(0,t)+w^2(1,t)\eta^2(1,t)).
 	\end{align*}
 	First subterms of the fourth and fifth term on the right hand side of \eqref{deq4.8} are bounded by
 	\begin{align*}
 	%I_4(\theta)+I_7(\theta)&=
 	&(1+c_0+w_d)\eta(0,t)\theta(0,t)+(1+c_1+w_d)\eta(1,t)\theta(1,t)\\
 	&\qquad+\frac{\mu}{\nu}\Big((1+c_0+w_d)\eta_t(0,t)\theta(0,t)+(1+c_1+w_d)\eta_t(1,t)\theta(1,t)\Big)\\
 	&\leq \frac{c_0}{8}\theta^2(0,t)+ \frac{c_1}{8}\theta^2(1,t)+C\Big(\eta^2(0,t)+\eta^2(1,t)\\
 	&\qquad+\mu^2\big(\eta_t^2(0,t)+\eta_t^2(1,t)\big)\Big).
 	\end{align*}
 	For second subterm of the fourth term on the right hand side, 
 	we note that for $i=0,\hspace{0.1cm}1$
 	\begin{align*}
 	\frac{2}{9c_i}\Big(w^3(i,t)-w_h^3(i,t)\Big)\theta(i,t)&=-\frac{2}{9c_i}\theta^4(i,t)-\frac{2}{3c_i}w_h^2(i,t)\theta^2(i,t)-\frac{2}{9c_i}\eta^3(i,t)\theta(i,t)\\
 	&\quad+\frac{2}{3c_i}\Big(w^2(i,t)\eta(i,t)-w(i,t)\eta^2(i,t)+w_h(i,t)\theta^2(i,t)\Big)\theta(i,t).
 	\end{align*}
 	Using Young's inequality,  implies that for $i=0,\hspace{0.1cm}1$
 	\begin{align*}
 	\frac{2}{9c_i}\eta^3(i,t)\theta(i,t)\leq \frac{2}{9c_i}\frac{1}{16}\theta^4(i,t)+C\eta^4(i,t),
 	\end{align*}
 	\begin{align*}
 	\frac{2}{3c_i}w^2(i,t)\eta(i,t)\theta(i,t)\leq \frac{c_i}{8}\theta^2(i,t)+Cw^4(i,t)\eta^2(i,t)
 	\end{align*}
 	and
 	\begin{align*}
 	\frac{2}{3c_i}w(i,t)\eta^2(i,t)\theta(i,t)\leq \frac{2}{9c_i}\frac{1}{16}\theta^4(i,t)+C(w(i,t)\eta^2(i,t))^\frac{4}{3}.
 	\end{align*}
 	Again, a use of Young's inequality yields
 	\begin{align*}
 	\frac{2}{3c_i}w_h(i,t)\theta^3(i,t)\leq \frac{2}{3c_i}12w_h^2(i,t)\theta^2(i,t)+\frac{2}{9c_i}\frac{1}{16}\theta^4(i,t).
 	\end{align*}
 	Hence, the contribution of the second subterm of the fourth term on the right hand side of \eqref{deq4.8} after applying Lemmas \ref{dlm1.2}, \ref{dlm1.4} and \ref{dlm3.1}, can be bounded as
 	\begin{align*}
 	%I_5(\theta)+I_6(\theta)&
 	\sum_{i=0}^{1}&\frac{2}{9c_i}\Big(w^3(i,t)-w_h^3(i,t)\Big)\theta(i,t)\\
 	&\leq -\frac{2}{9c_0}\frac{13}{16}\theta^4(0,t)+\frac{c_0}{8}\theta^2(0,t)+C\eta^2(0,t)
 	\\
 	&\quad
 	-\frac{2}{9c_1}\frac{13}{16}\theta^4(1,t)+\frac{c_1}{8}\theta^2(1,t)+C\eta^2(1,t)+8\sum_{i=0}^{1}\frac{1}{c_i}w_h^2(i,t)\theta^2(i,t).
 	\end{align*}
 Expanding the second subterm of the fifth term, we note that for $i=0,\hspace{0.1cm}1$
 	\begin{align*}
 	\frac{2}{9c_i}\frac{\mu}{\nu}\frac{d}{dt}\big(\eta^3(i,t)\big)\theta(i,t)\leq\frac{\mu}{\nu}C\theta^2(i,t)\eta^2(i,t)+C\frac{\mu}{\nu}\eta^2(i,t)\eta_t^2(i,t),
 	\end{align*}  
 	\begin{align*}
 	\frac{2}{9c_i}\frac{\mu}{\nu}\frac{d}{dt}\big(\theta^3(i,t)\big)\theta(i,t)=-\frac{1}{6c_i}\frac{\mu}{\nu}\frac{d}{dt}\big(\theta^4(i,t)\big),
 	\end{align*}
 	and using Lemmas \ref{dlm1.2} and \ref{dlm1.4}, we obtain
 	\begin{align*}
 	\frac{2}{3c_i}\frac{\mu}{\nu}\frac{d}{dt}\Big(w^2(i,t)\eta(i,t)\Big)\theta(i,t)\leq \frac{\mu}{\nu}C\theta^2(i,t)w^2(i,t)+C\frac{\mu}{\nu}\eta^2(i,t)+C\frac{\mu}{\nu}\eta_t^2(i,t).
 	\end{align*}
 	Also, it holds that
 	\begin{align*}
 	-\frac{2}{3c_i}\frac{\mu}{\nu}\frac{d}{dt}\Big(w(i,t)\eta^2(i,t)\Big)\theta(i,t)&\leq \frac{\mu}{\nu}C\theta^2(i,t)\big(w^2(i,t)+w_t^2(i,t)\big)+C\frac{\mu}{\nu}\eta^4(i,t)\\
 	&\qquad+C\frac{\mu}{\nu}\eta^2(i,t)\eta_t^2(i,t).
 	\end{align*}
 Rewrite and use the Young's inequality to obtain
 	\begin{align*}
 	-\frac{2}{3c_i}\frac{\mu}{\nu}\frac{d}{dt}\Big(w_h^2(i,t)\theta(i,t)\Big)\theta(i,t)&\leq-\frac{1}{3c_i}\frac{\mu}{\nu}\frac{d}{dt}\Big(w_h^2(i,t)\theta^2(i,t)\Big)\\
 	&\qquad+C\theta^2(i,t)\Big(w_h^2(i,t)+\mu^2 w_{ht}^2(i,t)\Big).
 	\end{align*}
 	Similarly,
 	\begin{align*}
 	\frac{2}{3c_i}\frac{\mu}{\nu}\frac{d}{dt}\Big(w_h(i,t)\theta^2(i,t)\Big)\theta(i,t)&\leq\frac{4}{9c_i}\frac{\mu}{\nu}\frac{d}{dt}\big(w_h(i,t)\theta^3(i,t)\big)+\frac{2}{9c_i}\frac{1}{16}\theta^4(i,t)\\
 	&\qquad+C\frac{\mu^2}{\nu^2}w_{ht}^2(i,t)\theta^2(i,t).
 	\end{align*}
 	Hence, from \eqref{deq4.8}, we arrive using Lemmas \ref{dlm1.2}, \ref{dlm1.4}, \ref{dlm2.2}, \ref{dlm2.4} and \ref{dlm3.1} at
 		\begin{align*}
 	\frac{d}{dt}\big(\norm{\theta(t)}^2&+\mu\norm{\theta_x(t)}^2)+\frac{3\nu}{2}\norm{\theta_x(t)}^2+\sum_{i=0}^{1}\frac{4}{3c_i}w_h^2(i,t)\theta^2(i,t)+E_1(\theta)(t)\\
 	&\qquad+\frac{\mu}{\nu}\frac{d}{dt}\Big(E_1(\theta)(t)+\sum_{i=0}^{1}\frac{2}{3c_i}w_h^2(i,t)\theta^2(i,t)\Big)\\
 	&\leq C((1+\mu^2)\norm{\eta_t(t)}^2+\norm{\eta(t)}^2)+C\Big(\tnorm{w(t)}^2+\norm{w_{xx}(t)}^2+\norm{\Delta_hw_h(t)}^2+w_x^2(0,t)\\
 	&\qquad+w_{hx}^2(0,t)\Big)\norm{\theta}^2+\epsilon\norm{\theta}^2+C(1+\mu)\Big(\sum_{i=0}^{1}\eta^2(i,t)\Big)+16\sum_{i=0}^{1}\frac{1}{c_i}w_h^2(i,t)\theta^2(i,t) \\
 	&\qquad+\frac{\mu}{\nu}\frac{d}{dt}\Big(\sum_{i=0}^{1}\frac{8}{9c_i}w_h(i,t)\theta^3(i,t)\Big)+C(\mu+\mu^2)\Big(\sum_{i=0}^{1}\eta_t^2(i,t)\Big)
 	\\
 	&\qquad+C\sum_{i=0}^{1}\frac{\mu}{\nu}\theta^2(i,t)\Big(w^2(i,t)+w_t^2(i,t)+w_h^2(i,t)+\mu w_{ht}^2(i,t)\Big).
 	\end{align*}
 	Multiply the above inequality  by $e^{2\alpha t}$. Use  Poincar\'e-Wirtinger's  inequality $$\norm{\theta(t)}^2\leq \theta^2(0,t)+\theta^2(1,t)+\norm{\theta_x(t)}^2
 	\leq E_1(\theta)(t)+\norm{\theta_x(t)}^2
 	$$ with 
 	\begin{align*}
 	2\alpha\frac{\mu}{\nu}\sum_{i=0}^{1}\frac{8}{9c_i}w_h(i,t)\theta^3(i,t)&\geq 
 	-2\alpha\frac{\mu}{\nu}\sum_{i=0}^{1}\frac{2}{3c_i}w_h^2(i,t)\theta^2(i,t)-2\alpha\frac{\mu}{\nu}E_1(\theta)(t)
 	\end{align*}
 This	yields 
  	\begin{align*}
 \frac{d}{dt}&\Bigg(e^{2\alpha t}\Big(\norm{\theta(t)}^2+\mu\norm{\theta_x(t)}^2+\frac{\mu}{\nu}\big(E_1(\theta)(t)+\sum_{i=0}^{1}\frac{2}{3c_i}w_h^2(i,t)\theta^2(i,t)\big)\Big)\Bigg)\\
 &\qquad+e^{2\alpha t}\sum_{i=0}^{1}\frac{4}{3c_i}w_h^2(i,t)\theta^2(i,t)+e^{2\alpha t}\Big(1-2\alpha\big(\frac{2\mu}{\nu}+1\big)\Big)E_1(\theta)(t)\\
 &\qquad+\Big(\frac{3\nu}{2}-2\alpha(1+\mu)\Big) e^{2\alpha t}\norm{\theta_x(t)}^2\\
 &\leq Ce^{2\alpha t}\big((1+\mu^2)\norm{\eta_t}^2+\norm{w(t)}^2_1\norm{\eta(t)}^2\big)+Ce^{2\alpha t}\Big(\phi(t)+w_x^2(0,t)+w_{hx}^2(0,t)\Big)\norm{\theta}^2
 \\
 &\qquad+C\frac{\mu}{\nu}\sum_{i=0}^{1}\theta^2(i,t)\Big(w^2(i,t)+w_t^2(i,t)+w_h^2(i,t)+\mu w_{ht}^2(i,t)\Big)+\epsilon e^{2\alpha t}\Big(E_1(\theta)(t)+\norm{\theta_x(t)}^2\Big)\\
 % 	&\qquad+C\frac{\mu}{\nu}\theta^2(1,t)\Big(w^2(1,t)+w_t^2(1,t)+w_h^2(1,t)+\mu w_{ht}^2(1,t)\Big)\\
 &\qquad+\frac{\mu}{\nu}\frac{d}{dt}\Bigg(e^{2\alpha t}\sum_{i=0}^{1}\Big(\frac{8}{9c_i}w_h(i,t)\theta^3(i,t)\Big)\Bigg)+e^{2\alpha t}(4\alpha\frac{\mu}{\nu}+24)\sum_{i=0}^{1}\frac{2}{3c_i}w_h^2(i,t)\theta^2(i,t)\\
 &\qquad+C(1+\mu)e^{2\alpha t}\big(\sum_{i=0}^{1}\eta^2(i,t)\big)+C(\mu+\mu^2)e^{2\alpha t}\Big(\sum_{i=0}^{1}\eta_t^2(i,t)\Big),
 \end{align*}
 	where $\phi(t)=\tnorm{w(t)}^2+\norm{w_{xx}(t)}^2+\norm{\Delta_hw_h(t)}^2$.
 	Now integrate from $0$ to $t$ and choose $\epsilon=\frac{\beta_1}{2}$ with 
 	 	\begin{align*}
 	2\alpha\frac{\mu}{\nu}\frac{8}{9c_i}w_h(i,t)\theta^3(i,t)&\geq -2\alpha\frac{\mu}{\nu}\frac{2}{3c_i}w_h^2(i,t)\theta^2(i,t)-2\alpha\frac{\mu}{\nu}\frac{8}{27c_i}\theta^4(i,t)
 	, \quad i=0,1.
 	\end{align*}
 	to find that  
 	\begin{align}
 	e^{2\alpha t}\Bigg(\norm{\theta(t)}^2+&\mu\norm{\theta_x(t)}^2+\frac{1}{9}\frac{\mu}{\nu}E_1(\theta)(t)\Bigg)+\frac{\beta_1}{2}\int_{0}^{t}e^{2\alpha s}\Bigg(\norm{\theta_x(t)}^2+E_1(\theta)(s)\Bigg)\;ds\notag\\
 	&\qquad+\int_{0}^{t}e^{2\alpha s}\Big(\sum_{i=0}^{1}\frac{4}{3c_i}w_h^2(i,s)\theta^2(i,s)\Big) ds\\
 	&\leq Ch^4\int_{0}^{t}e^{2\alpha s}\Big((1+\mu+\mu^2)\norm{w_t(t)}^2_2+(1+\mu)\norm{w(t)}_2^2\Big)\; ds\notag\\
 	&\qquad+C\int_{0}^{t}e^{2\alpha s}\Big(\phi(t)+w_x^2(0,t)+w_{hx}^2(0,t)\Big)\norm{\theta(t)}^2 ds\notag\\
 	&\qquad+C(\frac{\mu}{\nu}+1)\int_{0}^{t}e^{2\alpha s}E_1(\theta)(s)\psi(s)\;ds\label{dex1},
 	\end{align}
 	where $\psi(t)=\sum_{i=0}^{1}\Big(w^2(i,t)+w_t^2(i,t)+w_h^2(i,t)+\mu w_{ht}^2(i,t)\Big)$.
 	Then, an application of Gronwall's inequality to \eqref{dex1} shows
 	\begin{align}
 	e^{2\alpha t}\Bigg(\norm{\theta(t)}^2&+\mu\norm{\theta_x(t)}^2+\frac{1}{9}\frac{\mu}{\nu}E_1(\theta)(t)\Bigg)+\frac{\beta_1}{2}\int_{0}^{t}e^{2\alpha s}\Bigg(\norm{\theta_x(t)}^2+E_1(\theta)(s)\Bigg)\;ds\notag\\
 	&\qquad+\int_{0}^{t}e^{2\alpha s}\Big(\sum_{i=0}^{1}\frac{4}{3c_i}w_h^2(i,s)\theta^2(i,s)\Big) ds\\
 &	\leq Ch^4\int_{0}^{t}e^{2\alpha s}\Bigg((1+\mu)\norm{w(t)}^2_2+(1+\mu+\mu^2)\norm{w_t(s)}^2_2\Bigg)\; ds\notag\\
 	&\qquad\exp\Bigg(\int_{0}^{t}\Big(\mathbf{\phi}(s)+\mathbf{\psi}(s)+(w^4(0,s))^2+(w^4_h(0,s))^2\Big) ds\Bigg)\label{dex2}.
 	\end{align}
 	Multiplying \eqref{dex2} by $e^{-2\alpha t}$ and 
 	using Lemmas \ref{dlm1.2}, \ref{dlm1.4}, \ref{dlm2.2} and \ref{dlm2.4} with $\alpha=0,$ it follows that
 	\begin{align*}
 	\Bigg(\norm{\theta(t)}^2&+\mu\norm{\theta_x(t)}^2+\frac{1}{9}\frac{\mu}{\nu}E_1(\theta)(t)\Bigg)+\frac{\beta_1}{2}e^{-2\alpha t}\int_{0}^{t}e^{2\alpha s}\Bigg(\norm{\theta_x(t)}^2+E_1(\theta)(s)\Bigg)\;ds\\
 	&+e^{-2\alpha t}\int_{0}^{t}e^{2\alpha s}\Big(\sum_{i=0}^{1}\frac{4}{3c_i}w_h^2(i,s)\theta^2(i,s)\Big) ds\leq C\frac{1}{\mu}(\norm{w_0}_2)h^4e^{-2\alpha t} \exp\Big(C\norm{w_0}_2\Big).
 	\end{align*}
 	This completes the proof.
 \end{proof}
 \begin{lemma}\label{dlm3.3}
 	Assume that $w_0\in H^2(0,1)$. Then, there exists a positive constant C independent of h such that
 	\begin{align*}
 	\nu\norm{\theta_x(t)}^2+&\frac{1}{3}E_2(\theta)(t)+2e^{-2\alpha t}\int_{0}^{t}e^{2\alpha s}\Big(\norm{\theta_t(t)}^2+\mu\norm{\theta_{xt}(t)}^2\Big)\; ds\\
 	&\quad+\frac{\mu}{\nu}e^{-2\alpha t}\int_{0}^{t}e^{2\alpha s}\Big((1+c_0+w_d)\theta_t^2(0,s)+(1+c_1+w_d)\theta_t^2(1,s)\Big)\; ds\\
 	&\qquad \leq C\frac{1}{\mu}(\norm{w_0}_2)(1+\mu)h^4e^{-2\alpha t}\exp\big(C(\norm{w_0}_2)\big).
 	\end{align*}
 \end{lemma}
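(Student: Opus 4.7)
The plan is to mimic the continuous-case argument of Lemma~\ref{dlm1.3} at the error-equation level, testing~\eqref{deq4.7} with $\chi=\theta_t$ instead of $\chi=\theta$. With this choice the left-hand side produces the coercive quantities $\norm{\theta_t(t)}^2+\mu\norm{\theta_{xt}(t)}^2$, the exact time derivative $\frac{1}{2}\frac{d}{dt}\bigl(\nu\norm{\theta_x(t)}^2+\sum_{i=0}^{1}(1+c_i+w_d)\theta^2(i,t)\bigr)$ obtained by combining $\nu(\theta_x,\theta_{xt})$ with the linear boundary-stabilization contributions, and the boundary damping $\frac{\mu}{\nu}\sum_{i=0}^{1}(1+c_i+w_d)\theta_t^2(i,t)$ that sits as a coefficient of $\chi(i)$ in~\eqref{deq4.7}. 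The quartic $\frac{1}{9c_i}\theta^4(i,t)$ pieces that assemble into $\frac{1}{3}E_2(\theta)(t)$ on the LHS will be extracted from the cubic boundary contributions $\frac{2}{9c_i}(w^3(i,t)-w_h^3(i,t))\theta_t(i,t)$ by writing $w_h^3=(w-\eta-\theta)^3$ and isolating $\theta^3(i,t)\theta_t(i,t)=\frac{1}{4}\frac{d}{dt}\theta^4(i,t)$, which moves a positive quartic into the total derivative.

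Next I would bound every remaining source term on the right-hand side of~\eqref{deq4.7}. The linear projection-error contributions $(\eta_t,\theta_t)$, $\mu\lambda(\eta_t,\theta_t)$, $\nu\lambda(\eta,\theta_t)$ and $(1+w_d)(\eta_x,\theta_t)$ are handled by Cauchy--Schwarz and Young's inequality, with the $\theta_t$ factor absorbed into $\norm{\theta_t}^2$ on the LHS and the remainder estimated by $Ch^4\bigl(\norm{w(t)}_2^2+\norm{w_t(t)}_2^2\bigr)$ via~\eqref{deq4.5}. The trilinear difference $(ww_x-w_hw_{hx},\theta_t)=\bigl(w(\eta_x-\theta_x)+(\eta-\theta)w_{hx},\theta_t\bigr)$ is bounded by integrating by parts where needed and using the uniform $L^\infty$-in-time bounds on $w$, $w_x$, $w_h$, $w_{hx}$ supplied by Lemmas~\ref{dlm1.2},~\ref{dlmx1.5},~\ref{dlm2.2},~\ref{dlm2.3}, together with the $H^1$ control on $\theta$ already furnished by Lemma~\ref{dlm3.2}.

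The genuinely delicate pieces are the boundary nonlinearities $\frac{2}{9c_i}(w^3(i,t)-w_h^3(i,t))\theta_t(i,t)$ and their $\frac{\mu}{\nu}$-weighted time-differentiated analogues. I would factor $w^3-w_h^3=(\eta-\theta)(w^2+ww_h+w_h^2)$, bound the quadratic cofactor uniformly by Lemmas~\ref{dlm1.2} and~\ref{dlm2.2}, use $|\eta(i,t)|,|\eta_t(i,t)|\le Ch^2$ from Lemma~\ref{dlm3.1}, and apply Young's inequality to absorb the $\theta_t^2(i,t)$ pieces into the damping $\frac{\mu}{\nu}(1+c_i+w_d)\theta_t^2(i,t)$ already sitting on the LHS; only the $\theta^3(i,t)\theta_t(i,t)$ piece resists absorption and is instead converted into the total derivative described above. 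For the $\frac{\mu}{\nu}\frac{d}{dt}(w^3(i,t)-w_h^3(i,t))\theta_t(i,t)$ terms I would expand the time derivative, producing factors $w_t(i,t)$, $w_{ht}(i,t)$, $\eta_t(i,t)$, and $\theta_t(i,t)$; the first two are controlled by Lemmas~\ref{dlm1.4} and~\ref{dlm2.4}, $\eta_t$ is handled by Lemma~\ref{dlm3.1}, and the last is again absorbed into the boundary damping.

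With every term in place, multiplying the resulting differential inequality by $e^{2\alpha t}$, integrating over $[0,t]$, multiplying back by $e^{-2\alpha t}$, and closing by Gronwall's inequality --- using Lemmas~\ref{dlm1.2}--\ref{dlmx1.5} and~\ref{dlm2.1}--\ref{dlm2.4} to render every exponential prefactor bounded and Lemma~\ref{dlm3.2} to absorb the terms of the form $\int_0^t e^{2\alpha s}\norm{\theta(s)}^2\,ds$ --- yields the claimed $h^4$ bound with the decay factor $e^{-2\alpha t}$. The main obstacle is the careful bookkeeping around the cubic boundary time-differentiated term: one must organize $\frac{d}{dt}(w^3(i,t)-w_h^3(i,t))\theta_t(i,t)$ so that the sign-indefinite pieces involving $w_t(i,t)$ and $w_{ht}(i,t)$ are tamed by the already-available $E_3$ and $E_{3h}$ bounds, while the pieces purely in $\theta$ and $\theta_t$ either feed the positive quartic on the LHS or are absorbed by the boundary damping; everything else reduces to the routine pattern used in Lemmas~\ref{dlm1.3} and~\ref{dlm3.2}.
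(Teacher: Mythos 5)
Your proposal follows the paper's proof essentially step for step: test \eqref{deq4.7} with $\chi=\theta_t$, convert the cubic boundary piece $-\frac{2}{9c_i}\theta^3(i,t)\theta_t(i,t)$ into the exact derivative $-\frac{1}{18c_i}\frac{d}{dt}\theta^4(i,t)$ so that a positive quartic joins the total-derivative part of the left-hand side, absorb the boundary $\theta_t^2(i,t)$ contributions into the damping $\frac{\mu}{\nu}(1+c_i+w_d)\theta_t^2(i,t)$, control the sign-indefinite time-differentiated cubic terms through the $E_3$, $E_{3h}$ bounds of Lemmas \ref{dlm1.4} and \ref{dlm2.4} together with Lemma \ref{dlm3.1}, and close with the $e^{2\alpha t}$-weighted Gronwall argument resting on Lemmas \ref{dlm1.2}--\ref{dlm1.4}, \ref{dlm2.1}--\ref{dlm2.4} and \ref{dlm3.2}. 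This is the same decomposition and the same set of auxiliary estimates the paper uses.

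There is, however, one step that would fail as written: you propose to bound $(1+w_d)(\eta_x,\theta_t)$ directly by Cauchy--Schwarz and Young with remainder $Ch^4\big(\norm{w(t)}_2^2+\norm{w_t(t)}_2^2\big)$. Since the projection error satisfies only $\norm{\eta_x}=O(h)$ (not $O(h^2)$), a direct estimate gives $\frac18\norm{\theta_t}^2+Ch^2\norm{w}_2^2$, which destroys the $h^4$ superconvergence rate the lemma asserts. The paper's remedy --- which you do invoke for the trilinear term $I_3(\theta_t)$ but omit for $I_2(\theta_t)$ --- is to integrate by parts in space, converting $\eta_x$ into $\eta$ plus the boundary values $\eta(i,t)$, both of which are $O(h^2)$ by \eqref{deq4.5} and Lemma \ref{dlm3.1}, and then in time, writing $-(\eta,\theta_{xt})=-\frac{d}{dt}(\eta,\theta_x)+(\eta_t,\theta_x)$ so that the exact derivative is integrated out at the end and the remainder involves only $\norm{\eta_t}$ and $\norm{\theta_x}$, the latter controlled by Gronwall. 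With that one correction your argument coincides with the paper's.
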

 \begin{proof}
 	Set $\chi=\theta_t$ in \eqref{deq4.7} to obtain
 	\begin{align}
 	\norm{\theta_t(t)}^2+&\mu\norm{\theta_{xt}(t)}^2+\frac{1}{2}\frac{d}{dt}\Bigg(\nu\norm{\theta_x(t)}^2+\sum_{i=0}^{1}(1+c_i+w_d)\theta^2(i,t)\Bigg)\notag\\
 	&\qquad+\frac{\mu}{\nu}\Big(\sum_{i=0}^{1}(1+c_i+w_d)\theta_t^2(i,t)\Big)\notag\\
 	&\quad =\sum_{{i=1}}^{5}I_i(\theta_t)\label{deq4.10}.
 	\end{align}
 	The first term on the right hand side of \eqref{deq4.10} is bounded by
 	\begin{align*}
 	I_1(\theta_t)=(\eta_t,\theta_t)-\mu\lambda(\eta_t,\theta_t)+\nu\lambda(\eta,\theta_t)&\leq \frac{1}{4}\norm{\theta_t(t)}^2+C(1+\mu^2)\norm{\eta_t(t)}^2+C\norm{\eta(t)}^2.
 	\end{align*}
 	For the second term $I_2(\theta_t),$ first rewrite it as
 	\begin{align*}
 	I_2(\theta_t)&=(1+w_d)(\eta_x-\theta_x,\theta_t)\\
 	&=-(1+w_d)\frac{d}{dt}(\eta,\theta_x)+(1+w_d)(\eta_t,\theta_x)+(1+w_d)\frac{d}{dt}\Big(\eta(1,t)\theta(1,t)-\eta(0,t)\theta(0,t)\Big)\\
 	&\qquad-(1+w_d)\big(\eta_t(1,t)\theta(1,t)-\eta_t(0,t)\theta(0,t)\big)-(1+w_d)(\theta_x,\theta_t).
 	\end{align*}
 	A use of Young's inequality shows
 	\begin{align*}
 	I_2(\theta_t)&=	(1+w_d)(\eta_x-\theta_x,\theta_t)\\
 	&\leq(1+w_d)\frac{d}{dt}\Big(\eta(1,t)\theta(1,t)-\eta(0,t)\theta(0,t)\Big)-(1+w_d)\frac{d}{dt}(\eta,\theta_x)\\ &\qquad+C\Big(\norm{\eta_t(t)}^2+\tnorm{\theta(t)}^2+\eta_t^2(1,t)+\eta_t^2(0,t)\Big)+\frac{1}{8}\norm{\theta_t(t)}^2.
 	\end{align*}
 	For	the third term $I_3(\theta_t)$ on the right hand side of \eqref{deq4.10}, we first rewrite it as
 	\begin{align*}
 	I_3(\theta_t)&=	\big(w(\eta_x-\theta_x)+(\eta-\theta)w_{hx},\theta_t\big)\\
 	&=-(w\eta,\theta_{xt})+\big(w(1,t)\eta(1,t)\theta_t(1,t)-w(0,t)\eta(0,t)\theta_t(0,t)\big)-(w_x\eta,\theta_t)\\
 	&\qquad-(w\theta_x,\theta_t)+\big((\eta-\theta)w_{hx},\theta_t\big)\\
 	&=-\frac{d}{dt}\big((w\eta),\theta_x\big)+\frac{d}{dt}\Big(w(1,t)\eta(1,t)\theta(1,t)-w(0,t)\eta(0,t)\theta(0,t)\Big)\\
 	&\qquad+\big((w\eta)_t,\theta_x\big)-\big(w_x\eta,\theta_t\big)-\Big(\big(w(1,t)\eta(1,t)\big)_t\theta(1,t)-\big(w(0,t)\eta(0,t)\big)_t\theta(0,t)\Big)\\
 	&\qquad-(w\theta_x,\theta_t)+\big((\eta-\theta)w_{hx},\theta_t\big).
 	\end{align*}
 	and then an application of Young's inequality with Lemmas \ref{dlm1.2}, \ref{dlm1.4} and \ref{dlm2.2} yields
 	\begin{align*}
 	I_3(\theta_t)=	\big(w(\eta_x-\theta_x)&+(\eta-\theta)w_{hx},\theta_t\big)\\
 	& \leq -\frac{d}{dt}\big((w\eta),\theta_x\big)+\frac{d}{dt}\Big(w(1,t)\eta(1,t)\theta(1,t)-w(0,t)\eta(0,t)\theta(0,t)\Big)\\
 	&\quad +C\Big(\norm{\eta_t(t)}^2+\norm{\eta(t)}^2+\eta^2(1,t)+\eta^2(0,t)+\eta_t^2(0,t)+\eta_t^2(1,t)
 	+\tnorm{\theta(t)}^2\Big)\\
 	&\quad+C\norm{\theta(t)}^2\Big(w_h^2(0,t)+w_h^4(0,t)+\norm{\Delta_h w_h(t)}^2\Big)+\frac{1}{8}\norm{\theta_t(t)}^2.
 	\end{align*}
 	The first subterm of the fourth term $I_4(\theta_t)$ on the right hand side of \eqref{deq4.10} can be rewritten for $i=0,\hspace{0.1cm}1$ as
 	\begin{align*}
 	(1+c_i+w_d)\eta(i,t)\theta_t(i,t)&=(1+c_i+w_d)\frac{d}{dt}\big(\eta(i,t)\theta(i,t)\big)-(1+c_i+w_d)\eta_t(i,t)\theta(i,t).
 	\end{align*}
 	Hence, we obtain
 	\begin{align*}
 (1+c_0+w_d)&\eta(0,t)\theta_t(0,t)+(1+c_1+w_d)\eta(1,t)\theta_t(1,t)\\
 	&\leq(1+c_0+w_d)\frac{d}{dt}\big(\eta(0,t)\theta(0,t)\big)+(1+c_1+w_d)\frac{d}{dt}\big(\eta(1,t)\theta(1,t)\big)\\
 	&\qquad+ C\Big(\eta_t^2(0,t)+\eta_t^2(1,t)+\theta^2(0,t)+\theta^2(1,t)\Big).
 	\end{align*}
 	For second subterm of the fourth  term on the right hand side, we note that for $i=0,\hspace{0.1cm}1$
 	\begin{align*}
 	\frac{2}{9c_i}&\Big(w^3(i,t)-w_h^3(i,t)\Big)\theta_t(i,t)\\
 	&\quad=-\frac{1}{18c_i}\frac{d}{dt}\theta^4(i,t)-\frac{1}{3c_i}\frac{d}{dt}\Big(w_h^2(i,t)\theta^2(i,t)\Big)+\frac{2}{9c_i}\eta^3(i,t)\theta_t(i,t)\\
 	&\qquad+\frac{2}{3c_i}\Big(w^2(i,t)\eta(i,t)-w(i,t)\eta^2(i,t)+w_h(i,t)\theta^2(i,t)\Big)\theta_t(i,t)\\
 	&\qquad+\frac{2}{3c_i}w_h(i,t)w_{ht}(i,t)\theta^2(i,t).
 	\end{align*}
 	Using Lemma \ref{dlm1.2}, it follows that for $i=0,\hspace{0.1cm}1$
 	\begin{align*}
 	\frac{2}{9c_i}\eta^3(i,t)\theta_t(i,t)-\frac{2}{9c_i}\frac{d}{dt}\big(\eta^3(i,t)\theta(i,t)\big)\leq C\eta^2(i,t)\theta^2(i,t)+C\eta^2(i,t)\eta_t^2(i,t),
 	\end{align*}
 	\begin{align*}
 	\frac{2}{3c_i}w^2(i,t)\eta(i,t)\theta_t(i,t)-\frac{2}{3c_i}\frac{d}{dt}\Big(w^2(i,t)\eta(i,t)\theta(i,t)\Big)\leq C\Big(\theta^2(i,t)+\eta^2(i,t)+\eta_t^2(i,t)\Big),
 	\end{align*}
 	and
 	\begin{align*}
 	-\frac{2}{3c_i}w(i,t)\eta^2(i,t)\theta_t(i,t)+\frac{2}{3c_i}\frac{d}{dt}\Big(w(i,t)\eta^2(i,t)\theta(i,t)\Big)\leq C\Big(\theta^2(i,t)+\eta^2(i,t)+\eta_t^2(i,t)\Big).
 	\end{align*}
 	Also, we obtain
 	\begin{align*}
 	\frac{2}{9c_i}3w_h(i,t)\theta^2(i,t)\theta_t(i,t)\leq \frac{2}{9c_i}\frac{d}{dt}\big(w_h(i,t)\theta^3(i,t)\big)+ C\big(w_{ht}^2(i,t)\theta^2(i,t)+\theta^4(i,t)\big).
 	\end{align*}
 	We note that
 	\begin{align*}
 	\frac{2}{3c_0}w_h(0,t)&w_{ht}(0,t)\theta^2(0,t)+\frac{2}{3c_1}w_h(1,t)w_{ht}(1,t)\theta^2(1,t)\\
 	&\leq C\Big(w_h^2(0,t)+w_{ht}^2(0,t)\Big)\theta^2(0,t)+C\theta^2(1,t)\Big(w_h^2(1,t)+w_{ht}^2(1,t)\Big).
 	\end{align*}
 	The first subterm of the fifth term $I_5(\theta_t)$ on the right hand side is bounded by
 	\begin{align*}
 	\frac{\mu}{\nu}&(1+c_0+w_d)\eta_t(0,t)\theta_t(0,t)+\frac{\mu}{\nu}(1+c_1+w_d)\eta_t(1,t)\theta_t(1,t)\\
 	&\leq \frac{\mu}{\nu}\Big(\frac{c_0}{10}+\frac{1}{2}(1+w_d)\Big)\theta_t^2(0,t)+\frac{\mu}{\nu}\Big(\frac{c_1}{10}+\frac{1}{2}(1+w_d)\Big)\theta_t^2(1,t)+\frac{\mu}{\nu}C\Big(\eta_t^2(0,t)+\eta_t^2(1,t)\Big),
 	\end{align*}
 	For the second subterm of the fifth term $I_5(\theta_t),$ we note that for $i=0,\hspace{0.1cm}1$
 	\begin{align*}
 	\frac{2}{9c_i}\frac{\mu}{\nu}\frac{d}{dt}\eta^3(i,t)\theta_t(i,t)\leq \frac{\mu}{\nu}\frac{c_i}{10}\theta^2_t(i,t)+C\frac{\mu}{\nu}\eta^4(i,t)\eta_t^2(i,t),
 	\end{align*}
 	\begin{align*}
 	-\frac{\mu}{\nu}\frac{2}{9c_i}\frac{\mu}{\nu}\frac{d}{dt}\eta^3(i,t)\theta_t(i,t)=-\frac{\mu}{\nu}\frac{2}{3c_i}\theta^2(i,t)\theta_t^2(i,t).
 	\end{align*}
 	Using Lemmas \ref{dlm1.2} and \ref{dlm3.1}, it follows that
 	\begin{align*}
 	\frac{2}{9c_i}\frac{\mu}{\nu}\frac{d}{dt}\Big(w^2(i,t)\eta(i,t)-w(i,t)\eta^2(i,t)\Big)\theta_t(i,t)\leq \frac{\mu}{\nu}\frac{c_i}{10}\theta_t^2(i,t)+C\frac{\mu}{\nu}\Big(\eta^2(i,t)+\eta_t^2(i,t)\Big).
 	\end{align*}
 	Also, it is valid using Young's inequality that
 	\begin{align*}
 	\frac{2}{9c_i}\frac{\mu}{\nu}\frac{d}{dt}\Big(3w_h(i,t)\theta^2(i,t)\Big)\theta_t(i,t)&\leq \frac{\mu}{\nu}\frac{c_i}{10}\theta_t^2(i,t)+\frac{2\mu}{3\nu c_i}\Big(w_h^2(i,t)\theta_t^2(i,t)+\theta^2(i,t)\theta_t^2(i,t)\Big)\\
 	&\quad+C\frac{\mu}{\nu}w_{ht}^2(i,t)\theta^4(i,t),
 	\end{align*}
 	\begin{align*}
 	-\frac{\mu}{\nu}\frac{2}{3c_i}\frac{d}{dt}\Big(w_h^2(i,t)\theta(i,t)\Big)\theta_t(i,t)=-\frac{2\mu}{3\nu c_i}w_h^2(i,t)\theta_t^2(i,t)-\frac{4\mu}{3\nu c_i}w_h(i,t)w_{ht}(i,t)\theta(i,t)\theta_t(i,t).
 	\end{align*}
 	Using Lemma \ref{dlm2.2}, it follows that
 	\begin{align*}
 	-\frac{4\mu}{3\nu c_0}&w_h(0,t)w_{ht}(0,t)\theta(0,t)\theta_t(0,t)-\frac{4\mu}{3\nu c_1}w_h(1,t)w_{ht}(1,t)\theta(1,t)\theta_t(1,t)\\
 	&\leq \frac{\mu}{\nu}\frac{c_0}{10}\theta_t^2(0,t)+C\frac{\mu}{\nu}w_{ht}^2(0,t)\theta^2(0,t)+\frac{\mu}{\nu}\frac{c_1}{10}\theta_t^2(1,t)+C\frac{\mu}{\nu}w_{ht}^2(1,t)\theta^2(1,t).
 	\end{align*}
 	Hence, from \eqref{deq4.10}, we obtain using Lemmas \ref{dlm1.4} and \ref{dlm3.1}
 	\begin{align*}
 	\big(\norm{\theta_t(t)}^2+&\mu\norm{\theta_{xt}(t)}^2\big)+\frac{d}{dt}\Big(\nu\norm{\theta_x(t)}^2+E_2(\theta)(t)+\sum_{i=0}^{1}\frac{2}{3c_i}w_h^2(i,t)\theta^2(i,t)\Big)\\
 	&\qquad+\frac{\mu}{\nu}\Big(\sum_{i=0}^{1}(1+c_i+w_d)\theta_t^2(i,t)\Big)\\
 	&\leq C\Big(\norm{\eta(t)}^2+(1+\mu)\norm{\eta_t(t)}^2\Big)+C(1+\mu)\Big(\norm{\theta_x(t)}^2+\sum_{i=0}^{1}\big(\eta^2(i,t)+\eta_t^2(i,t)\\
 	&\quad+w_{ht}^2(i,t)\theta^2(i,t)\big)\Big)+C\norm{\theta(t)}^2\norm{\Delta_hw_h(t)}^2+C(1+\mu)\big(\sum_{i=0}^{1}\theta^4(i,t)\big)\\
 	&\quad-2(1+w_d)\frac{d}{dt}(\eta,\theta_x)+2(1+w_d)\frac{d}{dt}\Big(\sum_{i=0}^{1}(-1)^{i+1}\eta(i,t)\theta(i,t)\Big)\\
 	&\quad-2\frac{d}{dt}\big((w\eta),\theta_x\big)+2\frac{d}{dt}\Big(\sum_{i=0}^{1}(-1)^{i+1}w(i,t)\eta(i,t)\theta(i,t)\Big)
 	\\
 	&\quad+2\sum_{i=0}^{1}(1+c_i+w_d)\frac{d}{dt}\big(\eta(i,t)\theta(i,t)\big)+\frac{d}{dt}\Big(\sum_{i=0}^{1}E_5(i,t)\Big),
 	\end{align*}
 	where
 	\begin{align*}
 	E_5(i,t)=\frac{4}{9c_i}\Big(\big(\eta^3(0,t)+w^2(0,t)\eta(0,t)-w(0,t)\eta^2(0,t)+w_h(0,t)\theta^2(0,t)\big)\theta(0,t)\Big)\quad i=0,1.
 	\end{align*}
 	Multiply the above inequality by $e^{2\alpha t}$ and use Lemmas \ref{dlm1.2}, \ref{dlm1.3}, \ref{dlm2.2} and \ref{dlm3.1} with bounds of nonlinear boundary terms as in Lemma \ref{dlm3.2} to arrive at
 	\begin{align*}
 	e^{2\alpha t}\big(\norm{\theta_t(t)}^2&+\mu\norm{\theta_{xt}(t)}^2\big)+\frac{d}{dt}\Bigg(e^{2\alpha t}\Big(\nu\norm{\theta_x(t)}^2+E_2(\theta)(t)+\sum_{i=0}^{1}\frac{2}{3c_i}w_h^2(i,t)\theta^2(i,t)\Big)\Bigg)\\
 	&+\frac{\mu}{\nu}e^{2\alpha t}\Big(\sum_{i=0}^{1}(1+c_i+w_d)\theta_t^2(i,t)\Big)\\
 	&\leq Ch^4e^{2\alpha t}(1+\mu)\Big(\big(\norm{w(t)}^2_2+\norm{w_t}^2_2\big)\Big)+C(1+\mu)e^{2\alpha t}\Big(\norm{\theta_x(t)}^2+\sum_{i=0}^{1}\big(w_{ht}^2(i,t)\theta^2(i,t)\\
 	&\quad+\theta^4(i,t)\big)\Big)+2c_0\frac{d}{dt}\Big(e^{2\alpha t}\eta(0,t)\theta(0,t)\Big)+2(2+c_1+2w_d)\frac{d}{dt}\Big(e^{2\alpha t}\eta(1,t)\theta(1,t)\Big)\\
 	&\quad-2\frac{d}{dt}\Big(e^{2\alpha t}\big((w+1+w_d)\eta,\theta_x\big)\Big)+2\frac{d}{dt}\Big(e^{2\alpha t}\big(\sum_{i=0}^{1}(-1)^{i+1}w(i,t)\eta(i,t)\theta(i,t)\big)\Big)\\
 	&\quad+Ce^{2\alpha t}\norm{\theta(t)}^2\norm{\Delta_h w_h(t)}^2+\frac{d}{dt}\Big(e^{2\alpha t}\big(E_5(0,t)+E_5(1,t)\big)\Big).
 	\end{align*}
 	Integrate from $0$ to $t$ and then multiply the resulting inequality by $e^{-2\alpha t}$ to obtain
 	\begin{align}
 	\Big(\nu\norm{\theta_x(t)}^2&+E_2(\theta)(t)+\sum_{i=0}^{1}\frac{2}{3c_i}w_h^2(i,t)\theta^2(i,t)\Big)+e^{-2\alpha t}\int_{0}^{t}e^{2\alpha s}\Big(\norm{\theta_t(t)}^2+\mu\norm{\theta_{xt}(t)}^2\Big)\; ds\notag\\
 	&\quad+\frac{\mu}{\nu}e^{-2\alpha t}\int_{0}^{t}e^{2\alpha s}\Big(\sum_{i=0}^{1}(1+c_i+w_d)\theta_t^2(i,s)\Big)\; ds \notag\\
 	&\leq C(1+\mu)h^4e^{-2\alpha t}\int_{0}^{t}e^{2\alpha s}\Big(\big(\norm{w(t)}^2_2+\norm{w_t(t)}^2_2\big)\Big)\; ds\notag\\
 	&+Ce^{-2\alpha t}\int_{0}^{t}e^{2\alpha s}\norm{\theta(t)}^2\norm{\Delta_hw_h(t)}^2 ds+C(1+\mu) e^{-2\alpha t}\int_{0}^{t}e^{2\alpha s}\Big(\norm{\theta_x(t)}^2\notag\\
 	&+w_{ht}^2(0,s)\theta^2(0,s)+w_{ht}^2(1,s)\theta^2(1,s)+\theta^4(0,s)+\theta^4(1,s)\Big) ds\notag\\
 	&\quad+\Big(\big((2c_0-2w(0,t))\big)\eta(0,t)\theta(0,t)+\big((4+2c_1+4w_d+2w(1,t))\big)\notag\notag\\
 	&\quad\eta(1,t)\theta(1,t)\Big)-2\Big((w+1+w_d)\eta,\theta_x\Big)+E_5(0,t)+E_5(1,t)\label{dex3}.
 	\end{align}
 	
 	Use Young's inequality and Lemma \ref{dlm1.2} to obtain
 	\begin{equation*}
 	-2\Big((w+1+w_d)\eta,\theta_x\Big)\leq \frac{\nu}{2}\norm{\theta_x(t)}^2+C\norm{\eta(t)}^2.
 	\end{equation*}
 	Again using Young's inequality and Lemma \ref{dlm1.2}, we arrive at
 	\begin{align*}
 	\Big((2c_0-2w(0,t))&\eta(0,t)\theta(0,t)+(4+2c_1+4w_d+2w(1,t))\eta(1,t)\theta(1,t)\Big)\\
 	&\leq \frac{c_0}{4}\theta^2(0,t)+\frac{(c_1+2(1+w_d))}{4}\theta^2(1,t)
 	+C\big(\eta^2(0,t)+\eta^2(1,t)\big).
 	\end{align*}
 	Bounding in a similar fashion as in Lemma \ref{dlm3.2}, we obtain a bound for the nonlinear boundary terms as follows
 	\begin{align*}
 	E_5(i,t)\leq \frac{2}{3c_i}w_h^2(i,t)\theta^2(i,t)+\frac{1}{9c_i}\frac{5}{6}\theta^4(i,t)+\frac{c_i}{4}\theta^2(i,t)+C\eta^2(i,t)\quad i=0,1.
 	\end{align*}
 	Finally, apply Gr\"onwall's inequality to \eqref{dex3} to arrive using Lemmas \ref{dlm1.2}, \ref{dlm1.4}, \ref{dlm2.1}-\ref{dlm2.3} and \ref{dlm3.2} at
 	\begin{align*}
 	\nu&\norm{\theta_x(t)}^2+\sum_{i=0}^{1}\Big((c_i+1+w_d)\theta^2(i,t)+\frac{1}{27c_i}\theta^4(i,t)\\
 	&\quad+2e^{-2\alpha t}\int_{0}^{t}e^{2\alpha s}\Big(\norm{\theta_t(t)}^2+\mu\norm{\theta_{xt}(t)}^2\Big)\; ds+\frac{\mu}{\nu}e^{-2\alpha t}\int_{0}^{t}e^{2\alpha s}\Big(\sum_{i=0}^{1}(1+c_i+w_d)\theta_t^2(i,s)\Big)\; ds\\
 	&\qquad \leq C\frac{1}{\mu}(\norm{w_0}_2)(1+\mu)h^4e^{-2\alpha t}\exp\big(C(\norm{w_0}_2)\big).
 	\end{align*}
 	This completes the proof.
 \end{proof}
 \begin{remark}
 	As a consequence of Lemma \ref{dlm3.3}, we obtain superconvergence result for $\tnorm{\theta(t)}$ which depends on $\frac{1}{\sqrt \mu}$. However, for proving optimal estimate, only one modification may be made to compute $\int_{0}^{t}\norm{\eta_t(t)}^2ds\leq Ch^2\int_{0}^{t}\norm{w_{xt}(t)}^2 ds$.
 	Hence, we obtain
 	\begin{equation}\label{denx2}
 	\tnorm{\theta(t)}=O(h),
 	\end{equation}
 	which does not depend on $\frac{1}{\sqrt \mu}$.
 	Now using triangle inequality with Lemmas \ref{dlm3.2} and \ref{dlm3.3} and \eqref{denx2}, we obtain the following result.
 \end{remark}
 \begin{theorem}\label{dthm3.1}
 	Let $w_0\in H^2(0,1)$. Then, the following error estimates hold for the state and control variables
 	\begin{align}\label{deqn.5}
 	\norm{(w-w_h)(t)}^2_r= O\Big(\frac{1}{\sqrt\mu}h^{2-2r}e^{-\alpha t}\Big),
 	\end{align}
 	where $r=0,1$
 	and 
 	\begin{equation*}
 	\tnorm{(w-w_h)(t)}=O\Big(he^{-\alpha t}\Big).
 	\end{equation*}
 \end{theorem}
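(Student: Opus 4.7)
The plan is to use the standard decomposition $w - w_h = \eta - \theta$, where $\eta = w - \tilde{w}_h$ is the auxiliary projection error and $\theta = w_h - \tilde{w}_h$ is the finite-element component, and then apply the triangle inequality $\|w - w_h\|_r \leq \|\eta\|_r + \|\theta\|_r$ (and analogously for $\tnorm{\cdot}$) to split the task into two independent estimates. Since Lemmas \ref{dlm3.2} and \ref{dlm3.3} already deliver sharp bounds on $\theta$ and its derivatives, the remaining work is mostly to convert them into the asserted rates using what is known about $\eta$.

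First, for the projection error, I would use the standard estimates \eqref{deq4.5} to write $\|\eta(t)\|_r \leq C h^{2-r}\|w(t)\|_2$ for $r = 0,1$, together with the boundary bound $|\eta(x,t)| \leq C h^2 \|w(t)\|_2$ from Lemma \ref{dlm3.1} at $x = 0, 1$. The decay factor $e^{-\alpha t}$ on the right-hand side comes from Lemmas \ref{dlm1.2} and \ref{dlmx1.5}, which provide $\|w(t)\|_2^2 \leq C e^{-2\alpha t}$, so that $\|\eta(t)\|_r \leq C h^{2-r} e^{-\alpha t}$ and $\tnorm{\eta(t)} \leq C h\, e^{-\alpha t}$.

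Next, for the finite-element component, Lemma \ref{dlm3.2} gives $\|\theta(t)\|^2 + \mu \|\theta_x(t)\|^2 \leq C \mu^{-1} h^4 e^{-2\alpha t}$, which immediately yields the $L^2$-rate, and Lemma \ref{dlm3.3} supplies $\nu \|\theta_x(t)\|^2 \leq C \mu^{-1} h^4 e^{-2\alpha t}$ for the $H^1$ rate. Combining these with the triangle inequality gives \eqref{deqn.5} with the stated explicit $\mu$-dependence. For the sharper statement $\tnorm{w - w_h}(t) = O(h\, e^{-\alpha t})$, which must be free of $\mu^{-1/2}$, the key is the modification flagged in the preceding remark: in re-running the proof of Lemma \ref{dlm3.3} one should not apply the pointwise bound $\|\eta_t\|^2 \leq C h^4 \|w_t\|_2^2$ (which is what introduces the undesired $\mu^{-1}$ through Gronwall), but instead exploit only the integrated bound $\int_0^t \|\eta_t(s)\|^2\,ds \leq C h^2 \int_0^t \|w_{xt}(s)\|^2\,ds$, with the integral on the right controlled uniformly in $\mu$ by Lemma \ref{dlm1.4}. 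This yields $\tnorm{\theta(t)} = O(h\, e^{-\alpha t})$, which combined with the $\tnorm{\eta(t)}$ estimate above closes the argument.

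The main obstacle, as this last paragraph indicates, is the careful bookkeeping of the $\mu$-dependence: most terms from the nonlinear boundary feedback in \eqref{deq4.7} carry a prefactor $\mu/\nu$ and when they enter a Gronwall loop they inflate the constant in the wrong direction as $\mu \to 0$, so the superconvergence argument requires trading one power of $h$ from $\eta$ against an $L^2$-in-time integration of $\eta_t$. Keeping both the optimal spatial rate and the exponential time-decay factor $e^{-\alpha t}$ simultaneously, after choosing $\alpha$ as in \eqref{denq1.1}, is the delicate point that distinguishes this analysis from the purely time-independent error estimates.
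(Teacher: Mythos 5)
Your proposal follows essentially the same route as the paper: the splitting $w-w_h=\eta-\theta$, the projection estimates \eqref{deq4.5} and Lemma \ref{dlm3.1} for $\eta$ (with the decay of $\norm{w(t)}_2$ supplying the factor $e^{-\alpha t}$), Lemmas \ref{dlm3.2} and \ref{dlm3.3} for $\theta$, and for the $\mu$-independent $\tnorm{\cdot}$ bound the same modification the paper flags in the remark preceding the theorem, namely trading the $O(h^4)$ pointwise bound on $\norm{\eta_t}^2$ for the integrated $O(h^2)$ bound $\int_0^t\norm{\eta_t}^2\,ds\leq Ch^2\int_0^t\norm{w_{xt}}^2\,ds$. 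The argument is correct and matches the paper's proof.
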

 \begin{proof}
 	The proof follows from Lemmas \ref{dlm1.4}, \ref{dlm3.2} and \ref{dlm3.3} with a use of triangle inequality and \eqref{deq4.5}. 
 \end{proof}
 \begin{theorem}\label{dthm3.2}
 	For $w_0\in H^2(0,1),$ there exists a constant $C>0$ such that
 	\begin{align}
 	\norm{(w-w_h)(t)}_{L^\infty}= O\Big(\frac{h^2}{\sqrt\mu}e^{-\alpha t}\Big)\label{deqn.6}
 	\end{align}
 	and
 	\begin{align}
 	|v_i(t)-v_{ih}(t)|&:=|K_i(w(i,t))-K_i(w_h(i,t))|=
 	O\Big(\frac{h^2}{\sqrt\mu}e^{-\alpha t}\Big)\label{deqn.7},
 	\end{align}
 	where $i=0,$ $1$.
 \end{theorem}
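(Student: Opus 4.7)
The plan is to combine the auxiliary projection decomposition $w-w_h=\eta-\theta$ with the Agmon--Poincar\'e inequality \eqref{eqx1.12}, using Lemma \ref{dlm3.3} to control $\theta$ in the $\tnorm{\cdot}$ norm and \eqref{deqx5.4} to control $\eta$ in $L^\infty$. The key observation is that the right-hand side of Lemma \ref{dlm3.3} contains the bounds $\nu\norm{\theta_x(t)}^2=O(h^4/\mu)e^{-2\alpha t}$ and $E_2(\theta)(t)=O(h^4/\mu)e^{-2\alpha t}$, and since $E_2(\theta)(t)$ dominates the pure quadratic boundary contributions $(c_i+1+w_d)\theta^2(i,t)$, one extracts $|\theta(i,t)|=O(h^2/\sqrt{\mu})e^{-\alpha t}$ for $i=0,1$. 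Together with $\norm{\theta_x(t)}=O(h^2/\sqrt{\mu})e^{-\alpha t}$, this yields
\[
\tnorm{\theta(t)}^2=\theta^2(0,t)+\theta^2(1,t)+\norm{\theta_x(t)}^2=O\Big(\frac{h^4}{\mu}\Big)e^{-2\alpha t}.
\]

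For the $L^\infty$ estimate \eqref{deqn.6}, I would apply \eqref{eqx1.12} to obtain $\norm{\theta(t)}_{L^\infty}\leq\sqrt{2}\tnorm{\theta(t)}=O(h^2/\sqrt{\mu})e^{-\alpha t}$, combine with the projection estimate $\norm{\eta(t)}_{L^\infty}\leq Ch^2\norm{w(t)}_{2,\infty}$ from \eqref{deqx5.4}, and invoke the assumed regularity $w\in W^{2,\infty}$ stated right after Theorem \ref{thmx1}. The triangle inequality $\norm{w-w_h}_{L^\infty}\leq\norm{\eta}_{L^\infty}+\norm{\theta}_{L^\infty}$ then gives the result, with the exponential decay factor coming from the $\theta$ term (the $\eta$ term has bounded $W^{2,\infty}$ norm via Theorem \ref{thmx1}, which is asymptotically dominated by $e^{-\alpha t}$).

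For the superconvergence result \eqref{deqn.7} on the feedback control laws, I would write out the difference
\[
K_i(w(i,t))-K_i(w_h(i,t))=\frac{(-1)^i}{\nu}\Big((c_i+1+w_d)(w(i,t)-w_h(i,t))+\frac{2}{9c_i}(w^3(i,t)-w_h^3(i,t))\Big),
\]
factor $w^3(i,t)-w_h^3(i,t)=(w(i,t)-w_h(i,t))(w^2(i,t)+w(i,t)w_h(i,t)+w_h^2(i,t))$, and use the uniform boundedness of $w(i,t)$ and $w_h(i,t)$ (furnished by Lemmas \ref{dlm1.2} and \ref{dlm2.2}, noting the embedding $H^1\hookrightarrow L^\infty$) to reduce the estimate to a bound on $|w(i,t)-w_h(i,t)|=|\eta(i,t)-\theta(i,t)|$. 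Then Lemma \ref{dlm3.1} gives $|\eta(i,t)|\leq Ch^2\norm{w(t)}_2$, while the boundary extraction from Lemma \ref{dlm3.3} described above gives $|\theta(i,t)|=O(h^2/\sqrt{\mu})e^{-\alpha t}$, yielding \eqref{deqn.7}.

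The main obstacle will not be any individual step but rather making sure the $h^2$ superconvergent rate at the endpoints is cleanly extracted: a naive bound using the $H^1$ error $\tnorm{w-w_h}=O(h)$ from Theorem \ref{dthm3.1} would only yield $O(h)$ at the boundary via the trace inequality. The essential point is that Lemma \ref{dlm3.3} provides the quartic boundary quantity $E_2(\theta)(t)$ directly bounded by $h^4/\mu$, which is exactly what yields the pointwise $h^2$ rate at $x=0,1$ and, in turn, the $L^\infty$ and control-law superconvergence. Checking that this extraction is legitimate (i.e., that $E_2(\theta)(t)$ indeed contains a pure $\theta^2(i,t)$ term with a positive coefficient independent of $h$ and $\mu$) is the one subtlety that needs explicit verification.
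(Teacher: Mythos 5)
Your proposal is correct and follows essentially the same route as the paper: decompose $w-w_h=\eta-\theta$, extract $|\theta(i,t)|$ and $\|\theta_x\|$ from the left-hand side of Lemma \ref{dlm3.3} (which indeed carries the pure quadratic boundary terms inside $E_2(\theta)$ with $h$- and $\mu$-independent coefficients), pass to $\|\theta\|_{L^\infty}\le \sqrt{2}\tnorm{\theta}$ via \eqref{eqx1.12}, combine with \eqref{deqx5.4} and Lemma \ref{dlm3.1} for $\eta$, and expand $K_i(w)-K_i(w_h)$ using the boundedness of $w(i,t)$ and $w_h(i,t)$ — the paper differs only cosmetically in splitting $w^3-w_h^3$ through the intermediate $\tilde w_h^3$ rather than factoring directly. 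The one imprecise remark is your claim that the $\eta$ contribution is ``asymptotically dominated by $e^{-\alpha t}$'' merely because $\|w\|_{2,\infty}$ is bounded; to attach the decay factor to the $\eta$ term one needs the exponential decay of $\|w(t)\|_2$ (Lemmas \ref{dlm1.2} and \ref{dlmx1.5}), which is in fact what the paper invokes in the control-law estimate.
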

 \begin{proof}
 	From Lemma \ref{dlm3.3}, we obtain a superconvergence result for $\tnorm{\theta(t)}$. Using the Poincar\'e-Wirtinger's inequality, it follows that
 	\begin{equation*}
 	\norm{\theta(t)}_{{L^\infty}(I)}\leq C\tnorm{\theta(t)}.
 	\end{equation*}
 	Now a use of triangle inequality with estimates of $\norm{\eta(t)}_{L^\infty}$ and$\norm{\theta(t)}_{L^\infty},$ we arrive at the estimate \eqref{deqn.6}.
 	To find \eqref{deqn.7}, we note that 
 	the error in the control law is given by
 	\begin{align*}
 	|v_0(t)-v_{0h}(t)|&:=|K_0(w(0,t))-K_0(w_h(0,t))|\\
 	% &=|\frac{1}{\nu}\Big((1+c_0+w_d)(w(0,t)-w_h(0,t))+\frac{2}{9c_0}(w^3(0,t)-w_h^3(0,t))\Big)|\\
 	&=|\frac{1}{\nu}\Big((1+c_0+w_d)(\eta(0,t)-\theta(0,t))+\frac{2}{9c_0}(w^3(0,t)-\tilde w_h^3(0,t))\\
 	&\quad -\frac{2}{9c_0}(w_h^3(0,t)-\tilde w_h^3(0,t))\Big)|\\
 	&\leq C\big(|\eta(0,t)|+|\theta(0,t)|\big)+\frac{C}{c_0}|\eta(0,t)|(w^2(0,t)+\eta^2(0,t))\\
 	&\quad+\frac{C}{c_0}|\theta(0,t)|(w_h^2(0,t)+\eta^2(0,t))\\
 	&\leq C\frac{h^2}{\sqrt \mu}\norm{w}_2\big(1+w^2(0,t)+\norm{w}^2_2\big)+C|\theta(0,t)|(1+w_h^2(0,t)+\norm{w}^2_2)\\
 	&\leq C\frac{h^2}{\sqrt \mu}e^{-\alpha t}\exp\big(C\norm{w_0}_2\big).
 	\end{align*}
 	Similarly, it follows that
 	\begin{align*}
 	|v_1(t)-v_{1h}(t)|&:=|K_1(w(1,t))-K_1(w_h(1,t))|\\
 	&\qquad\leq C\frac{h^2}{\sqrt \mu}e^{-\alpha t}\exp\big(C\norm{w_0}_2\big).
 	\end{align*}
 	This completes the proof.
 \end{proof}
 \section{Numerical experiments}
 In this section, we discuss the fully discrete finite element formulation of \eqref{deq1.7}  using backward Euler method with Neumann boundary control laws. 
 Here, the time variable is discretized by replacing the time derivative by difference quotient.
 Let $W^n$ be the approximation of $w(t)$ in $V_h$ at $t=t_n=nk.$ 
 Let 
 $0<k<1$ denote the time step size and  
 $t_n=nk,$ where $n$ is nonnegative integer. For smooth function $\phi$ defined on $[0,\infty),$
 set $\phi ^n=\phi(t_n)$ and $\bar{\partial}_t\phi^n=\frac{(\phi^n-\phi^{n-1})}{k}$.\\
 Using backward Euler method, the fully discrete scheme corresponding 
 $\{{W^n}\}_{n\geq 1}\in V_h$ is a solution of
 \begin{align}
 	(\bar{\partial}_tW^n,\varphi_h)&+\mu(\bar{\partial}_tW^n_x,\varphi_{hx})+\nu(W^n_x,\varphi_{hx})+(1+w_d)(W^n_x,\varphi_h)+(W^nW^n_x,\varphi_h)+\Big((c_0+w_d)W^n(0)\notag\\
 	&+\frac{2}{9c_0}(W^n(0))^3\Big)\varphi_h(0)+\Big((c_1+w_d)W^n(1)+\frac{2}{9c_1}(W^n(1))^3\Big)\varphi_h(1)\notag\\
 	&+\frac{\mu}{\nu}\Bigg(\Big((c_0+w_d)\bar{\partial}_tW^n(0)\varphi_h(0)+\frac{2}{9c_0}\bar{\partial}_t\big(W^n(0)\big)^3\varphi_h(0)\Big)+\Big((c_1+w_d)\bar{\partial}_tW^n(1)\notag\\
 	&\qquad+\frac{2}{9c_1}\bar{\partial}_t\big(W^n(1)\big)^3\Big)\varphi_h(1)\Bigg)=0 \quad \forall \varphi_h \in V_h\label{dex4.1}
 \end{align}
 with
 $W^0=w_{0h}.$
 At each time level $t_n$, the nonlinear algebraic system \eqref{dex4.1} is solved by Newton's method with initial guess $W^{n-1}$.
 For implicit scheme \eqref{dex4.1} in our case, CFL condition is not needed.  We take  time step $k=0.0001$ and mesh size $h=1/60$.
 \begin{example}
	Here, we have taken the initial guess (exact solution at $t=0$) $w_0=20(0.5-x)^3-3,$ where $3=w_d$ is a constant steady solution for the original problem. We do not know the exact solution $w(t)$. Choose $t=[0,3.5]$.
	We consider zero Neumann boundary condition, which is without control and mark it as uncontrolled solution. Then to check whether constant steady state solution $w_d=3$ is asymptotically stable, we take nonlinear Neumann boundary feedback controllers which are given in \eqref{deq1.8}-\eqref{deq1.9} for different values of $c_0$ and $c_1$ with $\mu=0.5$ and $\nu=0.5$.
 \end{example}
From the line denoted as 'uncontrolled soln' in Figure \ref{fig:d5.1}, we can clearly observe that $W^n$ does not go to zero, that is, constant steady state solution $w_d=3$ is not asymptotically stable with zero Neumann boundaries. We now observe that for various combination of $c_0$ and $c_1$, the discrete solution goes to zero exponentially, see Figure \ref{fig:d5.1}. Moreover from Figure \ref{fig:d5.1}, we can see that the optimal decay rate $\alpha$ (with $w_d=3$), $0<\alpha\leq \frac{1}{2}\min\Big\{\frac{\nu}{\mu+1}, \frac{\nu}{2\mu+\nu}, \frac{\nu(4+c_i)}{\nu+(4+c_i)\mu} (i=0,1)\Big\}$ happens when $c_0=1=c_1$, which verify our theoretical result in Lemma \ref{dlm1.1}. When $c_i(i=0,1)<1$, then decay rate for the state is slow compare to the case when $c_i(i=0,1)\geq 1$.
\begin{figure}
	\centering
	\includegraphics[height=7cm]{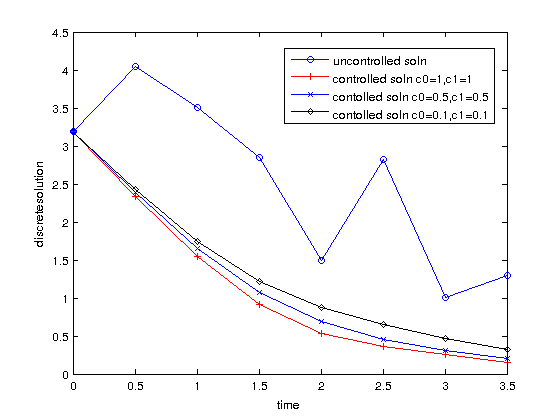}
	\caption{Both uncontrolled and controlled solution}
	\label{fig:d5.1}
\end{figure}
% % % % % % % % %

\begin{figure}[ht!]
	\begin{minipage}[b]{.5\linewidth}
		\centering
		
		\includegraphics[height=6cm]{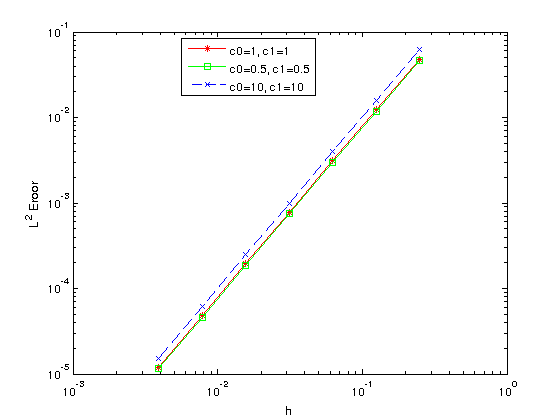}
		\caption{Order of convergence plot in $L^2$ norm}
		\label{fig:d5.2}
	\end{minipage}
	\hspace{0.05cm} % To get a little bit of space between the figures
	\begin{minipage}[b]{0.5\linewidth}
		\centering
		\includegraphics[height=6cm]{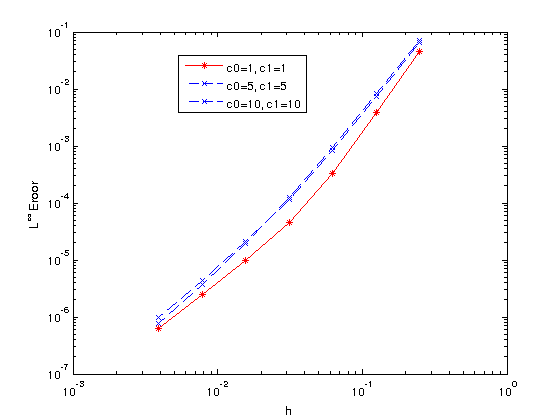}
		\caption{Order of convergence plot in $L^\infty$ norm}
		\label{fig:d5.3}
	\end{minipage}
\end{figure}
%In Tables \ref{table:1}, \ref{table:3}, \ref{table:5} 

\begin{figure}[ht!]
	\begin{minipage}[b]{.5\linewidth}
		\centering
		
		\includegraphics[height=6cm]{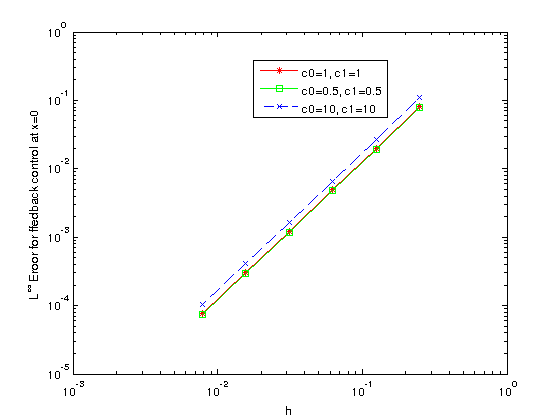}
		\caption{Convergence plot for feedback control error at $x=0$}
		\label{fig:d5.4}
	\end{minipage}
	\hspace{0.05cm} % To get a little bit of space between the figures
	\begin{minipage}[b]{0.5\linewidth}
		\centering
		\includegraphics[height=6cm]{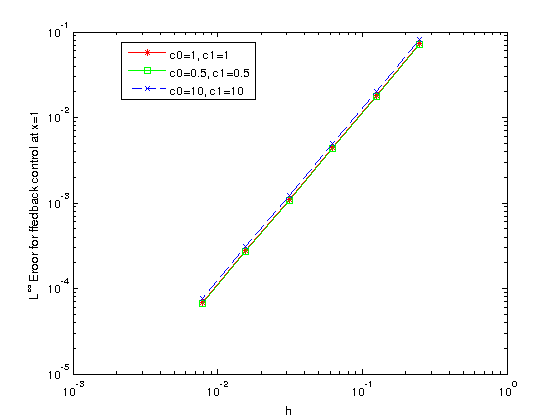}
		\caption{Convergence plot for feedback control error at $x=1$}
		\label{fig:d5.5}
	\end{minipage}
\end{figure}

\begin{figure}[ht!]
	\begin{minipage}[b]{.5\linewidth}
		\centering
		
		\includegraphics[height=6cm]{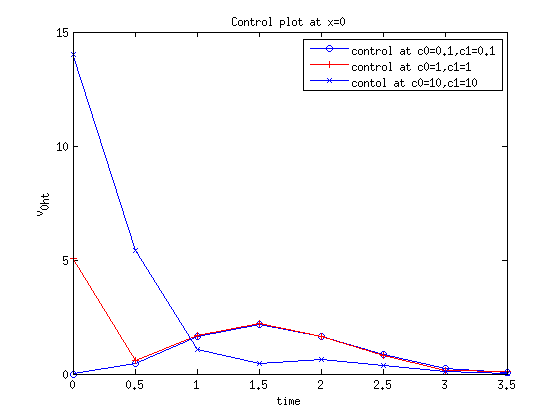}
		\caption{Control plot at $x=0$, namely; $v_{0ht}$}
		\label{fig:d5.6}
	\end{minipage}
	\hspace{0.05cm} % To get a little bit of space between the figures
	\begin{minipage}[b]{0.5\linewidth}
		\centering
		\includegraphics[height=6cm]{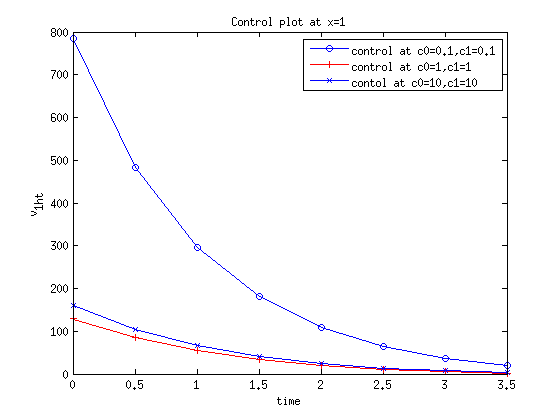}
		\caption{Control plot at $x=1$, namely; $v_{1ht}$}
		\label{fig:d5.7}
	\end{minipage}
\end{figure}
Now, we present order of convergence for the error in state variable $w(t)$ in $L^2$ and $L^\infty$ norms ($\norm{w(t_n)-W^N}_{L^2}$ and $\norm{w(t_n)-W^N}_{L^\infty}$ respectively) and also for the feedback controllers $v_0(t)$  and $v_1(t)$   ($|{v_0(t_n)-v_{0h}(t_n)}|$ and
$|{v_1(t_n)-v_{1h}(t_n)}|$ ) in $L^\infty$ norm at $t=1$. Exact solution is obtained through refined mesh solution.\\
Figures \ref{fig:d5.2} and \ref{fig:d5.3} indicate the error plot
for the state variable $w$ in $L^2$ and $L^\infty$ norms respectively, for various values of $c_0$ and $c_1$. We can easily observe from Figure \ref{fig:d5.2} that the convergence rate in the
$L^2$- norm for error in state variable is of order $2$ as predicted by Theorem \ref{dthm3.1}. From Figure \ref{fig:d5.3}, it is also noticeable that the order of convergence for error in state variable in $L^\infty$ norm is $2$ as expected from Theorem \ref{dthm3.2}.\\
For error in feedback controllers at $x=0$ and $x=1,$ it is observed from Figures \ref{fig:d5.4} and \ref{fig:d5.5} that for various values of $c_0$ and $c_1,$ the order of convergence is $2$ which confirms the result in Theorem \ref{dthm3.2}. 
In Figures \ref{fig:d5.6} and \ref{fig:d5.7}, we present the behavior of the feedback controllers at $x=0$ and $x=1$ with respect to time for various positive values of $c_0$ and $c_1$. Absolute value of the feedback controllers go to zero as time increases. So for $c_i(i=0,1)<1$ in the feedback control law, it will take more time for the control and state to settle down to zero (See Figures \ref{fig:d5.1}, \ref{fig:d5.6} and \ref{fig:d5.7}).\\
The next example consists of different type feedback control which is stated below.
In the following example, we consider the solution of \eqref{deq1.7} with one part zero Dirichlet boundary and another part different Neumann conditions.

\begin{example}\label{ex2}
In this example, we consider the solution of \eqref{deq1.7} with different boundary conditions. Take initial condition as $w_0=15\sin(\pi x)-5$, where $5$ is the steady state solution. We choose time $t=[0,10]$ and the time step $k=0.0001$ and $\mu=0.1$ and $\nu=0.1$.\\
For the uncontrolled solution, we take $w(0,t)=0$ and $w_x(1,t)=0$. The uncontrolled solution is denoted by 'uncontrolled soln'  in Figure \ref{fig:dx1}.\\
For the controlled solution we consider  $w(0,t)=0$ and $w_x(1,t)=v_1(t)=-\frac{1}{\nu}\Big((c_1+1+w_d)w(1,t)+\frac{2}{9c_1}w^3(1,t)\Big)$ with $c_1=1$ and $c_1=10$.
Denote the controlled solutions by 'controlled solution $c_1=1$',  'controlled solution with $c_1=10$', and 'controlled solution with $c_1=0.1$' in Figure \ref{fig:dx1}.
	\end{example}
\begin{figure}[ht!]
	\begin{minipage}[b]{.45\linewidth}
		\centering
		
		\includegraphics[height=6cm]{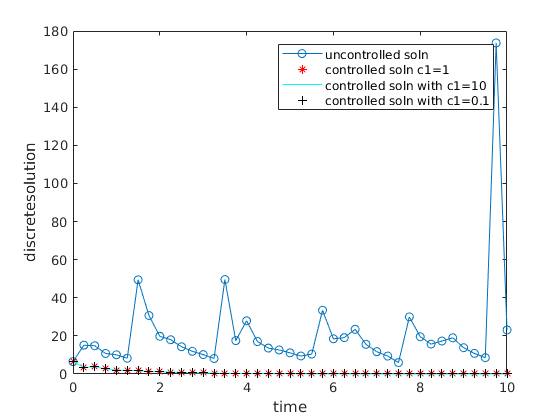}
		\caption{Controlled and Uncontrolled solution plot in $L^2$ norm}
		\label{fig:dx1}
	\end{minipage}
	\hspace{0.05cm} % To get a little bit of space between the figures
	\begin{minipage}[b]{0.45\linewidth}
		\centering
		\includegraphics[height=6cm]{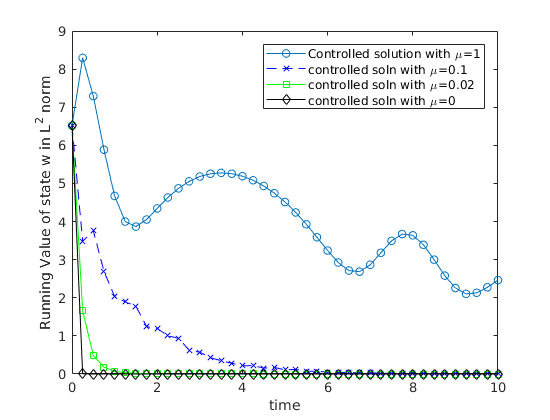}
		\caption{Decay of state $w$ in $L^2$ norm as $\mu\to 0$}
		\label{fig:dx2}
	\end{minipage}
\end{figure}
First draw line in Figure \ref{fig:dx1} shows that solution with zero boundary conditions ($w(0,t)=0$ and $w_x(1,t)=0$) oscillate.
But using above mentioned type of control with different values of $c_1$, solution  goes to zero.
With the initial condition of Example \ref{ex2}, decay of the state $w$ in $L^2$- norm varying $\mu$ with fixed $\nu=0.1$, $c_0=1=c_1$ is shown in Figure \ref{fig:dx2}. We observe that as $\mu$ decreases, $L^2$- norm of the state $w$ for BBM-B equation converges to the $L^2$- norm of the state $w$ with $\mu=0$ that is to the $L^2$- norm of the state of Burgers' equation.
\section{Conclusion}
In this article, under the assumption of the existence of solution, we show stabilization estimate in higher order norms
which is crucial to obtain optimal error estimates in the context of $C^0$- conforming  finite element analysis. Optimal error estimates for the state variable $w$ in $L^\infty(L^2)$,  $L^\infty(H^1)$ and $L^\infty(L^\infty)$ norms are established. Furthermore, superconvergence results for error in feedback controllers are also proved.
Following points which are itemized below will be addressed in a separate paper.
\begin{itemize}
	\item When the coefficient of viscosity is unknown (in the case of adaptive control), we believe that the control law as in Smaoui \cite{Smaoui} will also 
	work for BBM-B equation. Also when $\nu=0$, it is interesting to extend the analysis modifying the control law appropriately.
\item On the other hand, we have not discussed rigorously the existence of solution of problem  \eqref{deq1.7}-\eqref{deq1.10}, namely Theorem \ref{thmx1}.
\item In addition, for the fully discrete scheme \eqref{dex4.1}, it is interesting to know the large time behavior of the solution and how the corresponding time step size $k$ behaves in error estimates for fully discrete solution in addition to the space step size $h$.
\end{itemize}
{\bf{Acknowledgements.}}
The first author was supported by the ERC advanced grant 668998 (OCLOC) under the EUs H2020 research program. The first author  would like to thank Prof. Karl Kunisch for helpful suggestions. 
 %
 %
% 
% 
% 
%\bibliographystyle{amsplain}
% \bibliographystyle{alpha}
%\bibliography{bbmb}
\bibliographystyle{amsplain}
 
\end{document}